\newtheorem{theorem}{Theorem}[section]
\newtheorem{lemma}[theorem]{Lemma}
\newtheorem{proposition}[theorem]{Proposition}
\theoremstyle{definition}
\def\R{\mathbb{R}}
\def\C{\mathbb{C}}
\def\Z{\mathbb{Z}}
\begin{document}
	
	\newtheorem{thm}{Theorem}[section]
	\numberwithin{thm}{section}
	\newtheorem{lem}[thm]{Lemma}
	\newtheorem{propo}[thm]{Proposition}
	\newtheorem{corol}[thm]{Corollary}
	\newtheorem{deff}[thm]{Definition}
	\numberwithin{equation}{section}
	
	\title{Decouplings for three-dimensional surfaces in $\R^{6}$
	}
	
	
	\author{Changkeun Oh 
	}
	\address{Department of Mathematics, Pohang University of Science and Technology, Pohang 790-784, Republic of Korea}
\email{ock9082@postech.ac.kr}

\maketitle
\begin{abstract}
		We obtain the sharp $l^p$ decoupling for three-dimensional nondegenerate surfaces in $\mathbb{R}^6$. This can be thought of as a generalization of Bourgain and Demeter's result, which is the sharp $l^p$ decoupling for two-dimensional nondegenerate surfaces in $\R^4$.

	\end{abstract}
	
\section{Introduction}
\label{sec:1}

 Consider a three-dimensional surface in $\R^{6}$
\begin{displaymath}
\begin{split}
S=\{(\xi_{1},\xi_{2},\xi_{3},\Phi_{1}(\xi_{1},\xi_{2},\xi_{3}),\Phi_{2}(\xi_{1},\xi_{2},\xi_{3}),\Phi_{3}(\xi_{1},\xi_{2},\xi_{3})): (\xi_{1},\xi_{2},\xi_{3}) \in [0,1]^3	\}.
\end{split}
\end{displaymath} 
We assume that the functions $\Phi_{1},\Phi_{2},\Phi_{3}$ are homogeneous polynomials of degree two. In addition to this, we assume that the Jacobian of $(\Phi_{1},\Phi_{2},\Phi_{3}): \R^3 \rightarrow \R^3$ is not identically zero:
\begin{displaymath}
\LARGE\begin{vmatrix}
\frac{\partial{\Phi_{1}}}{\partial \xi_{1}} & \frac{\partial{\Phi_{1}}}{\partial \xi_{2}} & \frac{\partial{\Phi_{1}}}{\partial \xi_{3}} \\[0.30em]
\frac{\partial{\Phi_{2}}}{\partial \xi_{1}} & \frac{\partial{\Phi_{2}}}{\partial \xi_{2}} & \frac{\partial{\Phi_{2}}}{\partial \xi_{3}} \\[0.30em]
\frac{\partial{\Phi_{3}}}{\partial \xi_{1}} & \frac{\partial{\Phi_{3}}}{\partial \xi_{2}} & \frac{\partial{\Phi_{3}}}{\partial \xi_{3}} 
\end{vmatrix} \not\equiv 0.
\end{displaymath} 
We call this class of surfaces  three-dimensional \textit{nondegenerate} surfaces in $\R^6$.
For simplicity, we use the notation $\xi=(\xi_{1},\xi_{2},\xi_{3}) \in \R^3$ and $e(x)=e^{2\pi ix}$ for $x \in \mathbb{R}$, and we define a function $\Phi: [0,1]^3 \rightarrow \R^3$ by $\Phi(\xi)=(\Phi_{1}(\xi),\Phi_{2}(\xi),\Phi_{3}(\xi))$.

Given a function $g:[0,1]^3 \rightarrow \C$ and a rectangular box $\theta \subset [0,1]^3$, we define the $\textit{extension}$ operator $E_{\theta}$ associated with the surface $S$ by
$$
E_{\theta}g(x)=\int_{\theta} g(\xi)e(x_{1}\xi_{1}+x_{2}\xi_{2}+x_{3}\xi_{3}+(x_{4},x_{5},x_{6}) \cdot \Phi(\xi)) \, d\xi_{1} d\xi_{2} d \xi_{3}.
$$
For a positive weight $w:\R^6 \rightarrow (0,\infty)$, define the weighted $L^p$ norm
$$
\|f\|_{L^p(w)} = (\int_{\R^6}|f(x)|^pw(x)\,dx )^{\frac{1}{p}}.
$$
For a ball $B_{N}$ centered at $c(B)$ with radius $N$, we let $w_{B}$ denote the weight
$$
w_{B}(x) = \frac{1}{(1+\frac{|x-c(B)|}{N})^C},
$$
where the constant $C$ is a large but unspecific constant. For $N \geq 1$, let $\mathcal{P}_{N^{-1}}$ be the collection of all cubes $c+[0,\frac{1}{N^{1/2}}]^3$ with $c \in \frac{1}{N^{1/2}}\Z^{3}$. 

Given $N \geq 1$, $p \geq 2$ and a nondegenerate surface $S$, let $D_{S}(N,p)$ be the smallest constant satisfying the following inequality
$$  
\| E_{[0,1]^3}f \|_{L^p(w_{B_{N}})} 
\leq D_{S}(N,p) \bigl(\sum\limits_{\substack{\theta \in \mathcal{P}_{N^{-1}}}} \| E_{\theta}f \|_{L^p(w_{B_{N}})}^p \bigr)^{\frac{1}{p}}
$$
for any $f:[0,1]^3 \rightarrow \C$.

The $l^p$ decoupling problem for $S$ is to find a sharp upper bound of $D_{S}(N,p)$. Our main result is as follows.

\begin{theorem}
	Let $S$ be a three-dimensional nondegenerate surface in $\R^6$.
	\begin{enumerate}
		\item If there is some two-dimensional plane $L$ in $\R^3$ satisfying $\Phi(L) =0$, then for every $p \geq 2$ and $\epsilon>0$, there exists $C_{p,\epsilon}$ such that for every $N \geq 1$
		$$
		D_{S}(N,p) \leq C_{p,\epsilon} N^{\epsilon} \max{(N^{\frac{3}{2}-\frac{6}{p}},N^{1-\frac{2}{p}})}.
		$$
		\item If such $L$ does not exist, then for every $p \geq 2$ and $\epsilon>0$, there exists $C_{p,\epsilon}$ such that for every $N \geq 1$ 
		$$
		D_{S}(N,p) \leq C_{p,\epsilon} N^{\epsilon} \max{(N^{\frac{3}{2}-\frac{6}{p}},N^{\frac{3}{2}(\frac{1}{2}-\frac{1}{p})})}.
		$$
	\end{enumerate}
\end{theorem}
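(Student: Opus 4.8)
The plan is to recast the problem as a decoupling inequality for the tuple of quadratic forms $(\Phi_{1},\Phi_{2},\Phi_{3})$ and then run a Bourgain--Guth broad/narrow induction on the scale $N$, in which the homogeneity of the $\Phi_{i}$ supplies a parabolic rescaling, the nondegeneracy hypothesis feeds a multilinear estimate in the broad part, and a classification of triples of quadratic forms in three variables organizes the narrow part.

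\emph{Setup and rescaling.} Fix $p$ and write $D_{S}(N,p)\le C_{p,\epsilon}N^{\gamma_{S}(p)+\epsilon}$; the goal is to bound $\gamma_{S}(p)$ by the claimed exponent. Since each $\Phi_{i}$ is homogeneous of degree $2$, the dilation $\xi\mapsto\lambda\xi$ together with $(x_{1},x_{2},x_{3})\mapsto\lambda^{-1}(x_{1},x_{2},x_{3})$ and $(x_{4},x_{5},x_{6})\mapsto\lambda^{-2}(x_{4},x_{5},x_{6})$ is a symmetry of $E_{[0,1]^{3}}$. Consequently, on any subcube $Q\subset[0,1]^{3}$ of side $\rho$ one may affinely rescale $E_{Q}f$ to an extension operator for another nondegenerate quadratic surface at scale $N\rho^{2}$, and --- crucially --- on a thin slab $\{|\xi_{j}-c|\le\rho\}$ the analogous device (rescale $\xi_{j}$ by $\rho^{-1}$ and the three output variables by $\rho^{-2}$) reduces to a decoupling for the restriction of $S$ to the hyperplane $\{\xi_{j}=c\}$ with the $\xi_{j}$-direction rendered cheap. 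These rescalings are the mechanism by which lower-dimensional pieces are processed and reinserted into the iteration.

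\emph{Broad/narrow dichotomy.} Partition $[0,1]^{3}$ into caps $\tau$ of an intermediate side $\delta\in(N^{-1/2},1)$ and split, à la Bourgain--Guth, according to whether the significant caps are quantitatively transversal (broad case) or concentrate near a lower-dimensional algebraic set (narrow case). In the broad case, the Jacobian determinant of $(\Phi_{1},\Phi_{2},\Phi_{3})$, a nonzero polynomial of degree $3$, is bounded below away from a thin neighborhood of its zero set, which yields quantitative transversality of the images $(\xi,\Phi(\xi))$ there; on that region the multilinear restriction inequality of Bennett--Carbery--Tao (or the Brascamp--Lieb decoupling of Guo--Oh--Zhang--Zorin-Kranich) gives $l^{p}$ decoupling with constant $N^{\epsilon}$ for $2\le p\le 6$, hence the term $N^{3/2-6/p}$; one handles the thin exceptional neighborhood of the Jacobian by a cheaper lower-dimensional decoupling and absorbs it. In the narrow case the Fourier support concentrates in the $O(\delta)$-neighborhood of that zero set, which after an affine change of variables lies in boundedly many slabs $\{|\xi_{j}-c|\lesssim\delta\}$ (in worse strata, in neighborhoods of curves or points); rescaling each slab as above reduces matters to decoupling for the restriction of $(\Phi_{1},\Phi_{2},\Phi_{3})$ to a plane $L$ --- a lower-dimensional decoupling problem handled by the induction together with the known sharp decouplings for nondegenerate surfaces in $\R^{2},\R^{3},\R^{4}$ (Bourgain--Demeter).

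\emph{Classification and the dichotomy in the statement.} Up to the action of $GL_{3}(\R)$ on $\xi$ and $GL_{3}(\R)$ on $(\Phi_{1},\Phi_{2},\Phi_{3})$ there are finitely many normal forms for a nondegenerate triple of quadratics in three variables, and for each one computes the restriction $\Phi|_{L}$ to the relevant plane. Generically $\Phi|_{L}$ is again nondegenerate in lower dimension and the induction closes, the surviving contributions being $N^{3/2-6/p}$ together with the term $N^{\frac32(\frac12-\frac1p)}$, the latter produced by combining an $l^{2}$ decoupling with the trivial embedding $\ell^{2}\hookrightarrow\ell^{p}$ over the $N^{3/2}$ caps in the degenerate strata. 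The sole exception is a plane $L$ with $\Phi(L)=0$: then $S|_{L}$ is flat, flat decoupling into $N$ pieces is the best possible, and one picks up the extra $N^{1-2/p}$ --- this is case (1), while case (2) is precisely the hypothesis that this stratum is absent, so the smaller term $N^{\frac32(\frac12-\frac1p)}$ wins there. The main obstacle I anticipate is twofold: (a) making the broad-case transversality and the decoupling on the exceptional set of the Jacobian quantitative and uniform in $N$; and (b) carrying out the normal-form bookkeeping so that in every degenerate stratum the induced lower-dimensional decoupling provably never exceeds the two stated exponents --- in particular, verifying that a totally isotropic plane for $\Phi$ is the unique obstruction to the better bound.
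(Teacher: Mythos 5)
Your broad/narrow framework is the right high-level skeleton and matches the paper's (Section 5 runs a transverse/non-transverse dichotomy, Section 8 runs a Guth-style multiscale iteration on the bilinear Kakeya inequality to produce the bilinear decoupling). But the narrow part is where the proposal has a genuine gap, and it is precisely there that the paper's main new idea lives.

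The degenerate locus you land on in the narrow case is the zero set $Z=\{\xi : J(\xi,0)=0\}$ of a cubic polynomial in $\R^{3}$. This is a two-dimensional algebraic surface, generally singular, and its $\delta$-neighborhood is emphatically not contained in ``boundedly many slabs $\{|\xi_{j}-c|\lesssim\delta\}$'' — that would be true if $Z$ were a finite union of hyperplanes, which is exactly the situation Bourgain--Demeter enjoyed for the paraboloid (where the analogous locus is a $1$-variety, i.e.\ a hyperplane). Your subsequent steps — rescale a slab, restrict $\Phi$ to a plane $L$, invoke lower-dimensional decoupling, appeal to a $GL_{3}\times GL_{3}$ classification of nets of conics — therefore never get off the ground in the form stated, and you acknowledge as much in your ``obstacle (b).'' The paper's Section 4 solves this problem without any classification: one stratifies $Z$ by successive derivatives of the defining polynomial $F$ (of degree $3$), so that $F^{(1)}$ has degree $2$ and $F^{(2)}$ has degree $1$, whence the most singular stratum really is (a neighborhood of) a hyperplane; on the complementary manifold strata one runs a Pramanik--Seeger-style tangent-plane approximation with repeated rescaling (Lemmas 4.3--4.4), and on the hyperplane stratum one uses a genuine lower-dimensional decoupling (Lemma 4.2), which is where the hypothesis $\Phi(L)\neq 0$ enters: it forces $\Phi_{1}|_{L}=\xi_{1}^{2}+O(\xi_{2}^{2})$, giving the $K^{\frac32(\frac12-\frac1p)}$ gain rather than the trivial $K^{1-\frac2p}$ from flat decoupling. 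Crucially, this argument is uniform over all translates of $Z$, which is needed to close the multiscale induction; a normal-form classification would have to establish that uniformity separately.

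Two smaller inaccuracies: Bennett--Carbery--Tao multilinear restriction does not by itself ``give $l^{p}$ decoupling with constant $N^{\epsilon}$'' — the paper needs the wave-packet decomposition and the Guth iteration of Section 8 applied to the bilinear Kakeya inequality to convert it into a decoupling (the Brascamp--Lieb decoupling machinery you mention postdates this argument and is not what is used). And your bookkeeping for the exponent $N^{\frac32(\frac12-\frac1p)}$ is off: it comes from the $\sim K$ caps near the $2$-dimensional variety $Z$ (via the nondegenerate lower-dimensional decoupling above), not from an $\ell^{2}\hookrightarrow\ell^{p}$ embedding over all $N^{3/2}$ caps, which would be circular.
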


Theorem 1.1 is sharp up to $N^\epsilon$ losses. For the sharpness, we refer to the discussion in Section 9 of the paper \cite{DGS-Sharp-2017}. For a $d$-dimensional surface in $\R^{2d}$, one can follow the same arguments used in the proof of Theorem 1.1, but this does not give a sharp decoupling because of a lack of lower dimensional decouplings.
\\

The decoupling problem was introduced by Wolff \cite{W-Local-2000} in connection with the Bochner-Riesz type problem. After that, some progress in the $l^p$ decoupling for hypersurfaces has been made in \cite{LW-A-2002,LP-Wolff-2006, GSS-Improvements-2008,GS-On-2009,GS-A-2010, B-Moment-2013,D-Incidence-2014}.  
Recently, Bourgain and Demeter \cite{BD-The-2015} proved the decoupling conjectures for the paraboloid and the cone. Based on their results, the $l^p$ decouplings for surfaces of codimension larger than one have been studied in \cite{BD-Decouplings-2016,BD-Mean-2016,BDG-Proof-2016,BDG-Sharp-2017}. Moreover, it is known that their results are closely related to number theory, in particular, the estimates on the Riemann zeta function on the critical line and Parsell-Vinogradov systems. Our result can also be used to obtain the upper bound of the number of the system of Diophantine equations associated with the surface $S$; see \cite{BD-Mean-2016,BDG-Proof-2016}. 
\\

Our proof basically follows the Bourgain and Demeter's framework \cite{BD-The-2015}. First, we show that the sharp multilinear version of a decoupling implies the sharp linear decoupling; then we obtain the multilinear version of a decoupling. The main problem happens in the first step, in which we must deal with the decouplings clustered near 3-variety. Bourgain and Demeter solved this problem for other surfaces. Decouplings for the paraboloid require decouplings clustered near 1-variety instead of 3-variety. Because 1-variety is a hyperplane, Bourgain and Demeter could simply use a lower dimensional decoupling to solve the problem, and for the decouplings for two-dimensional surfaces, Bourgain and Demeter solved the problem just by applying trivial decouplings; neither approach applied directly to our problem. The novelty of our proof is to present a general way to apply lower dimensional decouplings to deal with the decouplings clustered near arbitrary variety.

My ideas are as follows. The first step is to show that we can essentially assume that the 3-variety is a zero set of ``non-singular'' polynomial, i.e., a manifold. This proof makes use of a decomposition of the 3-variety and an iteration with different scales. The next step is a variation of the iteration argument in \cite{PS-regularity-2007} (See also \cite{GO-Remarks-2016} and Section 8 in \cite{BD-The-2015}). We first approximated the manifold to tangent planes at some scale. Because a tangent plane is 1-variety, we can directly apply a lower dimensional decoupling as Bourgain and Demeter did for the paraboloid. Repeating this argument with rescaling completes the proof.

It is likely to have to handle a decoupling clustered near a variety to apply the framework of Bourgain and Demeter to surfaces of codimension larger than one.
Our argument for the decouplings clustered near arbitrary variety for a three-dimensional surface in $\R^6$ does not rely on any property of this surface. Hence, our argument can be used to obtain the decouplings clustered near arbitrary variety for other surfaces of codimension larger than one. 
\\

Throughout the paper we assume that the numbers $\epsilon^{-1}$ and $N^{2^{-m}}$ for some large integer $m$ are dyadic numbers to avoid technical problems. We write $A \lesssim B$ if $A \leq cB$ and $A \sim B$ if $c^{-1}A \leq B \leq cA$.  The constant $c$ will in general depend on fixed parameter $p$ and sometimes on the variable parameter $\epsilon$ but not $N$. 
If $R$ is a rectangular box and $c$ is a positive real number, then we denote by $cR$ the box obtained by dilating $R$ by a number $c$ about its center. Throughout the paper, our surface $S$ is always a three-dimensional surface in $\R^6$.

\subsection{Outline of the paper}
\label{sec:1.1}
In Section 2, we define a transversality and obtain the multilinear Kakeya inequality. In Section 3, we give some definitions and get some lemmas. 
In Section 4, we obtain a weak form of a decoupling clustered near a variety, which contains the novelty of this paper.
In Section 5, we study relations between the linear $l^p$ decoupling and the multilinear $l^p$ decoupling. 
In Section 6, we give well-known equivalent formulations of the decoupling problem. In Section 7, we review a standard wave packet decomposition. In Section 8, we complete the proof of Theorem 1.1. 

\section{Transversality}
\label{sec:2}

In this section, we will study a transversality condition. More precisely, we will define some concepts related to the transversality condition, and then we will obtain the multilinear Kakeya inequality.  Since our surface $S$ is a submanifold of half the ambient dimension, the multilinear Kakeya inequality is the same as the bilinear Kakeya inequality.

\subsection{Definitions}
\label{sec:2.1}
We can take three linearly independent normal vectors to $S$ at $(p,\Phi(p))$:
\begin{gather*}
	m_{1}(p)=(\frac{\partial \Phi_{1}}{\partial \xi_{1}}(p),\frac{\partial \Phi_{1}}{\partial \xi_{2}}(p),\frac{\partial \Phi_{1}}{\partial \xi_{3}}(p),-1,0,0),
	\\
	m_{2}(p)=(\frac{\partial \Phi_{2}}{\partial \xi_{1}}(p),\frac{\partial \Phi_{2}}{\partial \xi_{2}}(p),\frac{\partial \Phi_{2}}{\partial \xi_{3}}(p),0,-1,0),
	\\
	m_{3}(p)=(\frac{\partial \Phi_{3}}{\partial \xi_{1}}(p),\frac{\partial \Phi_{3}}{\partial \xi_{2}}(p),\frac{\partial \Phi_{3}}{\partial \xi_{3}}(p),0,0,-1).
\end{gather*}
Fix $\nu>0$. We say that two points $p_{1}$, $p_{2}$ in $\mathbb{R}^3$ are \textit{$\nu$-transverse} if
$$
J(p_{1},p_{2})=|\mathrm{det}(m_{1}(p_{1}),m_{2}(p_{1}),m_{3}(p_{1}),m_{1}(p_{2}),m_{2}(p_{2}),m_{3}(p_{2}))|>\nu.
$$
Note that
$
J(p_{1},p_{2})=J(p_{1}-p_{2},0)$.
This symmetry will make our proof easier. We say that two sets $E_{1},E_{2} \subset \mathbb{R}^3$ are $\nu$-transverse if any two points $p_{1} \in E_{1}$ and $p_{2} \in E_{2}$ are $\nu$-transverse.


\subsection{The bilinear Kakeya inequality}
\label{sec:2.2}

\begin{lem}[The bilinear restriction theorem]
	Let $\nu>0$. Let $R_{1},R_{2}$ be $\nu$-transverse cubes in $[0,1]^3$. Then for each $g_{i} : R_{i} \rightarrow \mathbb{C}$, we have
	$$\||E_{R_{1}}g_{1}E_{R_{2}}g_{2}|^{\frac{1}{2}}\|_{L^4(\mathbb{R}^{6})} \lesssim_{\nu} (\|g_{1}\|_{L^2(R_{1})}\|g_{2}\|_{L^2(R_{2})})^{\frac{1}{2}}.$$
\end{lem}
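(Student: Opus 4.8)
The plan is to reduce the bilinear restriction estimate for the surface $S$ to a known bilinear restriction/Kakeya estimate by exploiting that $S$ has codimension equal to its dimension. The key geometric input is the transversality hypothesis: if $R_1,R_2$ are $\nu$-transverse cubes, then at every pair of points $p_1\in R_1$, $p_2\in R_2$ the six normal vectors $m_1(p_1),m_2(p_1),m_3(p_1),m_1(p_2),m_2(p_2),m_3(p_2)$ span $\R^6$ quantitatively, i.e. $J(p_1,p_2)>\nu$. Equivalently, the tangent $3$-planes $T_{p_1}S$ and $T_{p_2}S$ are quantitatively transverse in $\R^6$ in the sense that $\R^6 = T_{p_1}S \oplus T_{p_2}S$ with a bound on the associated projections depending only on $\nu$.

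First I would recall the classical bilinear restriction/extension theorem in the form most convenient here, for instance the sharp bilinear restriction theorem of Tao (building on Wolff), or more simply the elementary $L^2$-based bilinear estimate for transverse pieces of submanifolds. In fact, since $S$ is a graph over $[0,1]^3$ and the two tangent spaces are transverse, the map $(\xi,\eta)\mapsto (\xi,\eta,\Phi(\xi)+\Phi(\eta))$ has Jacobian comparable to $J(\xi,\eta)$, which is $\gtrsim_\nu 1$ on $R_1\times R_2$. This is the standard mechanism: writing $|E_{R_1}g_1(x)\,E_{R_2}g_2(x)|$ and taking $L^2_x$ norms, one uses Plancherel in $\R^6$ together with this change of variables to control $\|E_{R_1}g_1\cdot E_{R_2}g_2\|_{L^2(\R^6)}$ by $\|g_1\|_{L^2(R_1)}\|g_2\|_{L^2(R_2)}$ up to a constant depending on $\nu$. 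Then I would upgrade the $L^2$ estimate to the claimed $L^4$ estimate for $|E_{R_1}g_1 E_{R_2}g_2|^{1/2}$; note that $\||E_{R_1}g_1E_{R_2}g_2|^{1/2}\|_{L^4}^2 = \|E_{R_1}g_1\,E_{R_2}g_2\|_{L^2}$, so the $L^4$ statement is literally just the square root of the $L^2$ bilinear estimate and no further interpolation or local-to-global argument is actually needed beyond the change-of-variables step.

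Concretely, the main steps in order: (1) reduce to $\|E_{R_1}g_1\cdot E_{R_2}g_2\|_{L^2(\R^6)}\lesssim_\nu \|g_1\|_{L^2(R_1)}\|g_2\|_{L^2(R_2)}$, which is equivalent to the claimed inequality by the identity above. (2) Observe that $E_{R_1}g_1\cdot E_{R_2}g_2$ is the inverse Fourier transform of the pushforward under $(\xi,\eta)\mapsto(\xi+\eta,\,\Phi(\xi)+\Phi(\eta))$ — wait, more precisely its spatial Fourier transform is supported on the set $\{(\xi+\eta,\Phi(\xi)+\Phi(\eta)) : \xi\in R_1,\eta\in R_2\}$ — and apply Plancherel. (3) Bound the resulting integral by making the change of variables $(\xi,\eta)\mapsto(u,v)=(\xi+\eta,\Phi(\xi)+\Phi(\eta))$ or, more robustly, by the standard argument that for each fixed value of $\xi+\eta$ the remaining integration has a Jacobian bounded below by $c\nu$ because $J(p_1,p_2)=J(p_1-p_2,0)$ and the relevant $3\times 3$ minor of $D(\Phi(\xi)+\Phi(\eta))$ restricted to the complementary directions is exactly (up to sign) $J(\xi,\eta)$. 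This last identification — that the Jacobian governing the change of variables is precisely the transversality determinant $J$, so that $\nu$-transversality gives a lower bound of the right size — is the one computational point that has to be checked carefully, and it is where the symmetry remark $J(p_1,p_2)=J(p_1-p_2,0)$ noted just before the lemma is used.

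The main obstacle, to the extent there is one, is purely bookkeeping: carrying out the change of variables cleanly in the presence of the weight-free global $L^2$ norm and verifying that the $6\times 6$ Jacobian of $(\xi,\eta)\mapsto(\xi+\eta,\Phi(\xi)+\Phi(\eta))$ equals $\pm J(\xi,\eta)$ up to a harmless constant. Once that determinant computation is in place, the estimate follows from Plancherel and the change of variables with no oscillatory-integral or induction-on-scales machinery; in particular, as the text remarks, because $S$ sits in ambient dimension twice its own dimension, the "multilinear" Kakeya/restriction input degenerates to this elementary bilinear $L^2$ statement and one does not need the deeper bilinear restriction theorem of Tao. I would therefore present the proof as: reduce to $L^2$, apply Plancherel, change variables using $J(\xi,\eta)\gtrsim_\nu 1$, done.
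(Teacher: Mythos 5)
Your approach is essentially identical to the paper's: reduce to the $L^2$ bilinear estimate via the identity $\||E_{R_1}g_1E_{R_2}g_2|^{1/2}\|_{L^4}^2=\|E_{R_1}g_1E_{R_2}g_2\|_{L^2}$, apply Plancherel in $\R^6$, and change variables under the map $(\xi,\eta)\mapsto(\xi+\eta,\Phi(\xi)+\Phi(\eta))$ whose Jacobian is exactly $J(\xi,\eta)\gtrsim_\nu 1$ by transversality. One point you gloss over, which the paper spends a sentence on, is that this map need not be injective on $R_1\times R_2$: it has non-vanishing Jacobian so it is a local diffeomorphism, but a value in the image can still have several preimages, and without a bound on that multiplicity the change-of-variables step would not control the $L^2$ norm of the pushforward density. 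The paper closes this gap by observing that the map is polynomial of fixed degree and invoking Bezout's theorem to conclude uniformly bounded multiplicity. Everything else in your sketch matches the paper, so the argument is correct in outline but is incomplete at precisely this justification; adding the Bezout (or any other finite-multiplicity) argument would make it a full proof.
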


\begin{proof}
	We use the change of variables
	\begin{displaymath}
	\begin{split}
	(\xi_{1},\xi_{2},\xi_{3},\eta_{1},\eta_{2},\eta_{3})  \mapsto (\xi_{1}+\eta_{1},\xi_{2}+\eta_{2},\xi_{3}+\eta_{3},\Phi(\xi_{1},\xi_{2},\xi_{3})+\Phi(\eta_{1},\eta_{2},\eta_{3})).
	\end{split}
	\end{displaymath}
	Note that the Jacobian of this mapping is given by $J(\xi_{1},\xi_{2},\xi_{3},\eta_{1},\eta_{2},\eta_{3})$.
	Since this transformation is defined in terms of homogeneous polynomials and $J(\xi,\eta) \neq 0$ for $\xi \in R_{1}$ and $\eta \in R_{2}$, it follows from Bezout's theorem that it has a uniformly bounded multiplicity. Hence
	$$|E_{R_{1}}g_{1}(x)E_{R_{2}}g_{2}(x)|=\widehat{FJ^{-1}}(x),$$
	where $F=g_{1}g_{2}$ and $|J^{-1}(u)| < \nu^{-1}$.
	By using Plancherel's identity, we have
	$$\||E_{R_{1}}g_{1}E_{R_{2}}g_{2}|^{\frac{1}{2}}\|_{L^4}=\|FJ^{-1}\|_{L^2}^{\frac{1}{2}} \lesssim_{\nu}  (\int|F^2(u){J^{-1}(u)}| \,du)^{\frac{1}{4}}=(\|g_{1}\|_{2}\|g_{2}\|_{2})^{\frac{1}{2}}.$$
	This completes the proof of Lemma 2.1.
\end{proof}

Let $\mathcal{P}$ be a collection of all cubes $P_{p}$ on a three-dimensional affine subspaces in $\mathbb{R}^{6}$
with a point $p \in [0,1]^3$ satisfying the following: the side lengths of each $P_{p}$ are equal to $N^{\frac{1}{2}}$ and the axes of $P_{p}$ span a subspace spanned by three vectors $m_{1}(p), m_{2}(p) ,m_{3}(p)$. We will say that $P_{p}$ is \textit{associated with} $p$.

\begin{deff}
	Let $\nu>0$. We say that two families $\mathcal{P}_{i}$ are $\nu$-transverse if there exist two cubes $\alpha_{1}, \alpha_{2} \subset  \R^3$, which are $\nu$-transverse, such that each $P_{p} \in \mathcal{P}_{i}$ is associated with a point $p \in \alpha_{i}$. 
\end{deff}

Suppose $P_{j,a}$ are elements of $\mathcal{P}_{j}$, where $1\leq a \leq N_{j}$ and  $j=1,2$. We denote by $\tilde{P}_{j,a}$ the 1-neighborhood of $P_{j,a}$ in $\mathbb{R}^{6}$, and denote by $T_{j,a}$ the characteristic function of $\tilde{P}_{j,a}$.

\begin{propo}[The bilinear Kakeya-type inequality] Let $\nu>0$. Assume that two families $\mathcal{P}_{j}=\{P_{j,a}:1 \leq a \leq N_{j}	\}$, $j=1,2$, are $\nu$-transverse. Then we have
	$$\int_{\mathbb{R}^{6}}\sum_{a=1}^{N_{1}}\sum_{b=1}^{N_{2}}T_{1,a}(x)T_{2,b}(x)\,dx \lesssim_{\nu} N_{1}N_{2}.$$
\end{propo}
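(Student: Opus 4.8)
The plan is to establish the stronger pointwise bound that each pair of $1$-neighbourhoods $\tilde{P}_{1,a}$, $\tilde{P}_{2,b}$ arising from the two $\nu$-transverse families intersects in a set of volume $\lesssim_{\nu}1$, and then simply sum over $a$ and $b$. Granting that,
$$
\int_{\R^6}\sum_{a=1}^{N_1}\sum_{b=1}^{N_2}T_{1,a}(x)T_{2,b}(x)\,dx=\sum_{a=1}^{N_1}\sum_{b=1}^{N_2}\bigl|\tilde{P}_{1,a}\cap\tilde{P}_{2,b}\bigr|\lesssim_{\nu}N_1N_2 ,
$$
which is the proposition. No genuine (Guth-type) multilinear Kakeya input is needed; this is exactly the regime where $S$ has half the ambient dimension, so $\tilde{P}_{1,a}$ and $\tilde{P}_{2,b}$ are $O(1)$-thickenings of pieces of $3$-planes in $\R^6$ whose directions are complementary, and a single intersection is already a bounded set.

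To bound $|\tilde{P}_{1,a}\cap\tilde{P}_{2,b}|$ I would let $p_1\in\alpha_1$ and $p_2\in\alpha_2$ be the points with which $P_{1,a}$ and $P_{2,b}$ are associated, and set $V_j=\mathrm{span}\{m_1(p_j),m_2(p_j),m_3(p_j)\}$. Two quick observations about the frame: since $\Phi_1,\Phi_2,\Phi_3$ are homogeneous of degree $2$, the entries $\partial\Phi_i/\partial\xi_k$ are linear and hence bounded on $[0,1]^3$, so $\|m_i(p)\|\lesssim1$ for $p\in[0,1]^3$; and the Gram matrix of $m_1(p),m_2(p),m_3(p)$ equals $I$ plus the (positive semidefinite) Gram matrix of $\nabla\Phi_1(p),\nabla\Phi_2(p),\nabla\Phi_3(p)$, so it is $\succeq I$. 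Thus $\{m_i(p_j)\}$ is a well-conditioned basis of $V_j$: every $v\in V_j$ with $|v|\le r$ can be written $v=\sum_i\lambda_i m_i(p_j)$ with $|\lambda_i|\le r$. Since $P_{1,a}$ is a cube of side $N^{1/2}$ inside an affine translate $c_1+V_1$ and $\tilde{P}_{1,a}$ is its $1$-neighbourhood, every $x\in\tilde{P}_{1,a}$ has the form $x=c_1+\sum_i\lambda_i m_i(p_1)+r_1$ with $|\lambda_i|\lesssim N^{1/2}$ and $|r_1|\le1$; likewise every $x\in\tilde{P}_{2,b}$ is $x=c_2+\sum_j\mu_j m_j(p_2)+r_2$ with $|\mu_j|\lesssim N^{1/2}$ and $|r_2|\le1$.

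The one real step is then: if $x\in\tilde{P}_{1,a}\cap\tilde{P}_{2,b}$, subtracting the two representations yields
$$
A\,(\lambda_1,\lambda_2,\lambda_3,-\mu_1,-\mu_2,-\mu_3)^{T}=(c_2-c_1)+(r_2-r_1),
$$
where $A$ is the $6\times6$ matrix with columns $m_1(p_1),m_2(p_1),m_3(p_1),m_1(p_2),m_2(p_2),m_3(p_2)$. By the $\nu$-transversality of $\alpha_1$ and $\alpha_2$ we have $|\det A|=J(p_1,p_2)>\nu$, while $\|A\|\lesssim1$ by the first observation above, so $\|A^{-1}\|\lesssim_{\nu}1$ (e.g.\ from the cofactor formula $A^{-1}=(\det A)^{-1}\mathrm{adj}\,A$). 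Hence $(\lambda_1,\lambda_2,\lambda_3)$ is confined to a ball of radius $\lesssim_{\nu}1$ about the fixed point cut out by $A^{-1}(c_2-c_1)$ — the only remaining freedom is the $O(1)$ vector $A^{-1}(r_2-r_1)$. Plugging this back into $x=c_1+\sum_i\lambda_i m_i(p_1)+r_1$ and using $\|m_i(p_1)\|\lesssim1$ shows $\tilde{P}_{1,a}\cap\tilde{P}_{2,b}$ lies in a ball of radius $\lesssim_{\nu}1$ in $\R^6$, hence has volume $\lesssim_{\nu}1$; the proposition follows from the display at the start.

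I expect no serious obstacle here: the estimate is essentially deterministic, and the only place the hypotheses genuinely enter is in upgrading the qualitative transversality $\det A\neq0$ to the quantitative bound $\|A^{-1}\|\lesssim_{\nu}1$, which is precisely what the condition $J(p_1,p_2)>\nu$ supplies. Alternatively, one could deduce the proposition from Lemma 2.1 through the standard passage from a bilinear restriction estimate to a bilinear Kakeya estimate, but the direct geometric argument above is shorter and self-contained.
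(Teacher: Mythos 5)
Your proposal is correct, and it takes a genuinely different route from the paper. The paper first proves the bilinear restriction estimate (Lemma 2.1) by a change of variables and Plancherel, and then derives Proposition 2.3 by the standard restriction-implies-Kakeya device: for each plate $P_{i,a}$ one manufactures a test function $f_{w_{i,a}}$ whose extension is bounded below on the rescaled plate, plugs the tensor of these into the bilinear $L^4$ restriction inequality, and rescales. Your argument bypasses Fourier analysis entirely: since the surface is $3$-dimensional in $\R^6$, the two transverse families of plates span complementary $3$-planes, and the quantitative transversality $J(p_1,p_2)>\nu$ together with $\|m_i(p)\|\lesssim 1$ (from the linearity of $\nabla\Phi_i$ on $[0,1]^3$) yields $\|A^{-1}\|\lesssim_\nu 1$ and hence $|\tilde P_{1,a}\cap\tilde P_{2,b}|\lesssim_\nu 1$ for each pair; summing gives the result. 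The Gram-matrix observation correctly upgrades $|v|\lesssim N^{1/2}$ to $|\lambda_i|\lesssim N^{1/2}$, though as you note the size constraint on the coefficients is never actually used --- only the transversality enters. Your route is more elementary and self-contained for this particular statement; what the paper's route buys is that Lemma 2.1 is a natural stand-alone fact and the restriction-implies-Kakeya template is the one that generalizes to settings (lower codimension, more factors) where a single pairwise intersection of thickened plates is no longer bounded and one genuinely needs a Guth/Bennett--Carbery--Tao input. In the present codimension-$d$-in-$\R^{2d}$ regime, your direct volume bound is exactly the right shortcut.
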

There is a well known argument proving that the restriction inequality implies the Kakeya \mbox{inequality}. To prove Proposition 2.3, we follow the same argument.
\begin{proof}
	Fix an element $P_{i,a} \in \mathcal{P}_{i}$ and a point $v_{i,a} \in \mathbb{R}^{6}$. Let $P_{i,a}$ be associated with some point $p_{i,a}$. Let $w_{i,a}=p_{i,a}+[0,N^{-\frac{1}{2}}]^3 \subset [0,1]^3$. Define $f_{w_{i,a}}: [0,1]^3 \rightarrow \mathbb{C}$ by $$f_{w_{i,a}}(\xi)=e(-v_{i,a} \cdot (\xi,\Phi(\xi)))\chi_{w_{i,a}}(\xi).$$ Then 
	\begin{displaymath}
	\begin{split}
	|E_{[0,1]^3}f_{w_{i,a}}(x)|=|\int_{w_{i,a}} e((x-v_{i,a}) \cdot ((\xi,\Phi(\xi))-(p_{i,a},\Phi(p_{i,a}))))\, d\xi|.
	\end{split}
	\end{displaymath}
	Let $\tilde{P}_{i,a}(v_{i,a})$ be the translation of the rectangle $cN^{\frac{1}{2}}\tilde{P}_{i,a}$ centered at $v_{i,a}$ for some small constant $c>0$. Then for any $x \in \tilde{P}_{i,a}(v_{i,a})$
	\begin{displaymath}
	\begin{split}
	|E_{[0,1]^3}f_{w_{i,a}}(x)| \geq \int_{w_{i,a}}\frac{1}{1000} d\xi  = \frac{1}{1000N^{\frac{3}{2}}}.
	\end{split}
	\end{displaymath}
	Hence, we get $|E_{[0,1]^3}f_{w_{i,a}}(x)| \geq {10^{-3}N^{-\frac{3}{2}}}\chi_{\tilde{P}_{i,a}(v_{i,a})}(x)$, so
	$$ 
	|E_{[0,1]^3}f_{w_{1,a}}(x)E_{[0,1]^3}f_{w_{2,b}}(x)|
	\geq  {10^{-6}}N^{-3}\chi_{\tilde{P}_{1,a}(v_{1,a})}(x)\chi_{\tilde{P}_{2,b}(v_{2,b})}(x)
	$$
	for any transverse sets $w_{1,a}$ and $w_{2,b}$, for any points $v_{1,a},v_{2,b} \in \mathbb{R}^{6}$ and for any $x \in \mathbb{R}^{6}$.
	By the bilinear restriction theorem, we get
	\begin{gather*}
		N^{-3} N_{1}N_{2}  \gtrsim \|(\sum_{a=1}^{N_{1}}|f_{w_{1,a}}|^2)^{\frac{1}{2}}\|_{L^2(\mathbb{R}^{3})}^2 \|(\sum_{b=1}^{N_{2}}|f_{w_{2,b}}|^2)^{\frac{1}{2}}\|_{L^2(\mathbb{R}^{3})}^2 
		\\ \gtrsim
		\|(\sum_{a,b}|E_{[0,1]^3}f_{w_{1,a}}(x)E_{[0,1]^3}f_{w_{2,b}}(x)|^2)^{\frac{1}{2}}\|_{L^2(\mathbb{R}^{6})}^2
		\gtrsim 
		N^{-6}\int_{\mathbb{R}^{6}} \sum_{a,b}\chi_{\tilde{P}_{1,a}(v_{1,a})}(x)\chi_{\tilde{P}_{2,b}(v_{2,b})}(x)\,dx.
	\end{gather*}
	Next, we use the change of variables $y=cN^{-\frac{1}{2}} x$. This gives the desired results.
\end{proof}

By interpolating two points $p=4$ and $p=\infty$ via H\"{o}lder's inequality, we obtain
\begin{displaymath}
\begin{split}
\int_{\mathbb{R}^{6}} |\sum_{a=1}^{N_{1}}\sum_{b=1}^{N_{2}}T_{1,a}(x)T_{2,b}(x)|^{\frac{p}{4}}dx \lesssim_{\nu} (N_{1}N_{2})^{\frac{p}{4}}
\end{split}
\end{displaymath}
for any $4 \leq p < \infty$. Moreover, a standard argument gives
\begin{corol} Let $\nu>0$.
	Assume that two families $\mathcal{P}_{j}=\{P_{j,a}:1 \leq a \leq N_{j}	\}$, $j=1,2$, are $\nu$-transverse. Then for any $4 \leq p < \infty$, we have
	\begin{displaymath}
	\begin{split}
	\int_{\mathbb{R}^{6}}\prod_{i=1}^{2} |\sum_{a=1}^{N_{i}}T_{i,a}*g_{i,a}(x)|^{\frac{p}{4}}dx \lesssim_{\nu} \prod_{i=1}^{2}(\sum_{a=1}^{N_{i}}\|g_{i,a}\|_{L^1(\mathbb{R}^{6})})^{\frac{p}{4}}
	\end{split}
	\end{displaymath}
	for all nonnegative functions $g_{i,a} \in L^{1}(\mathbb{R}^{6})$.
\end{corol}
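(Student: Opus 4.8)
The plan is to deduce the corollary from Proposition 2.3 by a standard "pushing the weight onto the slab" argument, exactly as one passes from a Kakeya maximal inequality for tubes to its weighted convolution form. First I would reduce to the case where each $g_{i,a}$ is a characteristic function: by the layer-cake decomposition $g_{i,a} = \int_0^\infty \chi_{\{g_{i,a} > \lambda\}}\, d\lambda$, linearity of convolution, and Minkowski's inequality in $L^{p/4}$ (using $p/4 \geq 1$), it suffices to bound $\int \prod_i |\sum_a T_{i,a} * \chi_{U_{i,a}}|^{p/4}$ by $\prod_i (\sum_a |U_{i,a}|)^{p/4}$ for arbitrary measurable sets $U_{i,a} \subset \R^6$. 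Next I would further reduce to the case $|U_{i,a}| \le 1$: if $|U_{i,a}| > 1$ one chops $U_{i,a}$ into $\lceil |U_{i,a}|\rceil$ pieces of measure at most $1$, which only increases the number of tubes in family $i$ by a bounded factor relative to the total mass $\sum_a |U_{i,a}|$, and since Proposition 2.3's bound $N_1 N_2$ is exactly proportional to the cardinalities, this refinement is harmless.

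Once $|U_{i,a}| \leq 1$, the key pointwise observation is that $T_{i,a} * \chi_{U_{i,a}}(x) \leq \chi_{\widetilde{P}_{i,a} + U_{i,a}}(x)$, and since $U_{i,a}$ has measure at most $1$ while $\widetilde{P}_{i,a}$ is the $1$-neighborhood of a cube of side $N^{1/2}$, the set $\widetilde{P}_{i,a} + U_{i,a}$ is contained in a bounded dilate of a slab of the same type (its axes still span the same three-dimensional subspace spanned by $m_1(p_{i,a}), m_2(p_{i,a}), m_3(p_{i,a})$, and its side lengths are still $\sim N^{1/2}$). So $T_{i,a} * \chi_{U_{i,a}} \leq \chi_{Q_{i,a}}$ where $Q_{i,a}$ is, up to the universal dilation already built into the definition of $T_{i,a}$, a slab of exactly the form appearing in Proposition 2.3. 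Therefore
\begin{displaymath}
\int_{\R^6} \prod_{i=1}^2 \Bigl| \sum_{a=1}^{N_i} T_{i,a} * \chi_{U_{i,a}}(x) \Bigr|^{\frac{p}{4}} dx \lesssim \int_{\R^6} \prod_{i=1}^2 \Bigl( \sum_{a=1}^{N_i} \chi_{Q_{i,a}}(x) \Bigr)^{\frac{p}{4}} dx.
\end{displaymath}
Expanding the product of sums over all pairs $(a,b)$ and applying the interpolated inequality (the $L^{p/4}$ version of Proposition 2.3 displayed just before the corollary, which only used that the two families are $\nu$-transverse) bounds the right-hand side by $\lesssim_\nu (N_1 N_2)^{p/4}$. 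Finally I would unwind the two reductions: tracking the bookkeeping, $N_i$ after the chopping step is $\sim \sum_a |U_{i,a}|$, which after the layer-cake step reassembles to $\sum_a \|g_{i,a}\|_{L^1(\R^6)}$, giving the claimed bound $\prod_i (\sum_a \|g_{i,a}\|_{L^1})^{p/4}$.

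The only real point requiring care — and the step I expect to be the main obstacle — is the bookkeeping in the two reductions: one must make sure that the layer-cake parameter $\lambda$ and the chopping of large sets interact correctly, so that after Minkowski's inequality in $L^{p/4}$ the resulting "number of tubes" genuinely matches $\sum_a \|g_{i,a}\|_{L^1}$ rather than some larger quantity. Concretely, applying Minkowski in $\lambda$ to the quasi-norm $\|\cdot\|_{L^{p/4}}$ for each factor $i$ separately, then using that the two factors are integrated over the same space with Hölder already folded into the $p=\infty$ endpoint of Proposition 2.3, requires that one commutes the $\lambda_1$- and $\lambda_2$-integrals with a two-factor product; this works because the bilinear estimate is itself multilinear in the two mass parameters $N_1, N_2$, so the $\lambda_i$-integral can be done factor by factor. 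All the geometric content is already in Proposition 2.3 and its $L^{p/4}$ interpolant; the corollary is a soft consequence.
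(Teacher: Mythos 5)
Your overall plan—reduce to indicator functions, then dominate each convolution pointwise by a tube indicator, then feed into the $L^{p/4}$ Kakeya estimate—matches the paper's in spirit, and the layer-cake/Minkowski step (one $\lambda$-variable per factor $i$, applied after Fubini) does go through for $p/4\ge 1$. But there is a genuine gap at the step you label the ``key pointwise observation.'' You claim that once $|U_{i,a}|\le 1$ the set $\widetilde{P}_{i,a}+U_{i,a}$ is contained in a bounded dilate of a slab of the same type. This is false: a measurable set of measure $\le 1$ can have arbitrarily large diameter (e.g.\ a thin shell at distance $N^{100}$ from the origin, or $\lceil T\rceil$ tiny pieces spread over a region of diameter $T$), so $\widetilde{P}_{i,a}+U_{i,a}$ need not be close to any single translate--dilate of $\widetilde{P}_{i,a}$. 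Chopping $U_{i,a}$ by \emph{measure} gives no spatial localization, so the domination $T_{i,a}*\chi_{U_{i,a}}\le\chi_{Q_{i,a}}$ with $Q_{i,a}$ a comparable slab simply does not hold, and the subsequent application of the $L^{p/4}$ Kakeya inequality has nothing to bite on. (A secondary issue: chopping into $\lceil|U_{i,a}|\rceil$ pieces gives a count $\sum_a\lceil|U_{i,a}|\rceil$, which can greatly exceed $\sum_a|U_{i,a}|$ when many of the $|U_{i,a}|$ are small, so even if the geometry were fine the bookkeeping would overcount.)

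The correct fix — which is exactly what the paper does — is to localize \emph{spatially} rather than by measure. Fix a finitely overlapping cover $\mathcal{G}$ of $\mathbb{R}^6$ by translates $P$ of the basic slab, and observe that
\[
T_{i,a}*g_{i,a}(x)=\int_{x-\widetilde{P}_{i,a}}g_{i,a}
\le\sum_{P\in\mathcal{G}}\widetilde{T}_{i,a,P}(x)\int_{P}g_{i,a},
\]
where $\widetilde{T}_{i,a,P}$ is the indicator of a bounded dilate of $P$ translated by the center $c_{i,a}$ of $\widetilde{P}_{i,a}$. This produces a new (possibly infinite) family of slabs $\widetilde{T}_{i,a,P}$ \emph{with weights} $\int_P g_{i,a}$, rather than a single slab per $a$, so one must first upgrade Proposition~2.3 to a weighted $L^{p/4}$ Kakeya estimate
\[
\int_{\mathbb{R}^6}\Bigl|\sum_{a,b}u_a v_b\,T_{1,a}(x)T_{2,b}(x)\Bigr|^{p/4}dx
\lesssim_\nu\Bigl(\sum_{a,b}u_a v_b\Bigr)^{p/4},
\]
which the paper records as its first step. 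Applying it with weights $u_{a,P}=\int_P g_{1,a}$, $v_{b,Q}=\int_Q g_{2,b}$ and noting $\sum_P\int_P g_{i,a}=\|g_{i,a}\|_{L^1}$ then gives the corollary directly, with no need for the layer-cake or measure-chopping reductions. Your proposal omits both the weighted Kakeya step and the spatial grid decomposition, and the missing geometry is exactly where the argument breaks.
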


\begin{proof}
	First, observe that 
	\begin{equation}
	\begin{split}
	\int_{\mathbb{R}^{6}} |\sum_{a=1}^{\infty}\sum_{b=1}^{\infty}u_{a}v_{b}T_{1,a}(x)T_{2,b}(x)|^{\frac{p}{4}}dx \lesssim_{\nu} (\sum_{a=1}^{\infty}\sum_{b=1}^{\infty}u_{a}v_{b})^{\frac{p}{4}}
	\end{split}
	\end{equation}
	for all non-negative real numbers $u_{a}$,$v_{b}$. Let $c_{i,a}$ be the center of $\tilde{P}_{i,a}$. Next, we take a finitely overlapping cover of $\mathbb{R}^{6}$ by translating a fixed tube $\tilde{P}_{i,a}$, and call this cover $\mathcal{G}$. Then
	\begin{displaymath}
	\begin{split}
	T_{i,a}*g_{i,a}(x) =\int_{x-\tilde{P}_{i,a}}g_{i,a}(y)\,dy \leq \sum_{P \in \mathcal{G}} ({\tilde{T}}_{i,a,P}(x) \int_{P}g_{i,a}(y)\,dy ),
	\end{split}
	\end{displaymath}
	where $\tilde{T}_{i,a,P}$ is a characteristic function of $100P+c_{i,a}$. Combining this with $(2.1)$ gives the desired results.
\end{proof}

\section{Some definitions and lemmas}
\label{sec:3}

Let $S$ be a three-dimensional nondegenerate surface in $\mathbb{R}^{6}$. Recall that for given $p \geq 2$ and $N \geq1$, $D_{S}(N,p)$ is defnied as the smallest constant satisfying
$$\|E_{S}g\|_{L^p({w}_{B_{N}})} \leq D_{S}(N,p)\bigl(\sum\limits_{\substack{\theta
		\in \mathcal{P}_{N^{-1}} }}\|E_{\theta}g\|_{L^p({w}_{B_{N}})}^p\bigr)^{\frac{1}{p}}$$
for each $g: [0,1]^{3} \rightarrow \mathbb{C}$. For $\nu>0, p \geq 2$ and $N\geq 1$, we denote by $D_{bil}(N,p,\nu)$ the smallest constant such that the bilinear decoupling holds; 
$$\||E_{R_{1}}g_{1}E_{R_{2}}g_{2}|^{\frac{1}{2}}\|_{L^p({w}_{B_{N}})} \leq D_{bil}(N,p,\nu)\biggl(\prod_{i=1}^2
\sum\limits_{\substack{\theta \in \mathcal{P}_{N^{-1}} }}
\|E_{\theta}g_{i}\|_{L^p({w}_{B_{N}})}^p\biggr)^{\frac{1}{2p}}$$
for any functions $g_{1},g_{2} : [0,1]^3 \rightarrow \mathbb{C}$ and any $\nu$-transverse dyadic cubes $R_{1},R_{2} \subset [0,1]^3$.

We will use the following lemma frequently. This lemma is identical to Lemma 7.1 in \cite{BDG-Proof-2016}.
\begin{lem}[A localization principle]
	Let $\mathcal{W}$ be the collection of positive integrable functions on $\mathbb{R}^{6}$. Let $O_{1},O_{2}:\mathcal{W} \rightarrow [0,\infty]$ have the following four properties:
	\begin{enumerate}
		\item $O_{1}(1_{B}) \lesssim O_{2}(w_{B})$ for all cubes $B \subset \mathbb{R}^{6}$ of side length $R$
		\item $O_{1}(u+v) \leq O_{1}(u)+O_{1}(v)$, for each $u,v \in \mathcal{W}$
		\item $O_{2}(u+v) \geq O_{2}(u)+O_{2}(v)$, for each $u,v \in \mathcal{W}$
		\item If $u \leq v$ then $O_{i}(u) \leq O_{i}(v)$.
	\end{enumerate}
	Then
	\begin{displaymath}
	\begin{split}
	O_{1}(w_{B}) \lesssim O_{2}(w_{B})
	\end{split}
	\end{displaymath}
	for each cube $B$ with side length $R$. The implicit constant is independent of $R,B$ and depends on the implicit constant from $(1)$.
\end{lem}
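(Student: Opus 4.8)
The plan is to deduce the conclusion $O_{1}(w_{B}) \lesssim O_{2}(w_{B})$ from hypothesis (1) by decomposing the weight $w_{B}$ into a countable sum of functions comparable to characteristic functions of translates of $B$, and then exploiting subadditivity of $O_{1}$ together with superadditivity of $O_{2}$. Concretely, fix a cube $B$ of side length $R$ with center $c(B)$. First I would tile $\mathbb{R}^{6}$ by a lattice of cubes $\{Q_{k}\}_{k \in \Z^{6}}$ each of side length $R$, where $Q_{k} = c(B) + Rk + [-R/2,R/2]^{6}$, so that $\sum_{k} 1_{Q_{k}} \equiv 1$. Since $w_{B}(x) = (1+|x-c(B)|/R)^{-C}$ decays polynomially, on each $Q_{k}$ we have $w_{B}(x) \le c_{k}$ with $c_{k} \sim (1+|k|)^{-C}$, and because $C$ is taken large (say $C > 6$) the sum $\sum_{k} c_{k}$ is finite. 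Thus $w_{B} \le \sum_{k} c_{k} 1_{Q_{k}}$ pointwise.

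Next I would combine the four properties in sequence. By monotonicity (4), $O_{1}(w_{B}) \le O_{1}\bigl(\sum_{k} c_{k} 1_{Q_{k}}\bigr)$. By subadditivity (2) applied repeatedly (and a routine limiting argument for the infinite sum, using monotonicity to pass from finite partial sums), $O_{1}\bigl(\sum_{k} c_{k} 1_{Q_{k}}\bigr) \le \sum_{k} c_{k} O_{1}(1_{Q_{k}})$; here I am also using positive homogeneity of $O_{1}$, which is a degenerate case of subadditivity together with the standard scaling one expects of these functionals (if it is not literally built in, one absorbs the constant $c_{k}$ by comparing $c_{k} 1_{Q_{k}}$ with a sum of $\lceil c_{k} \rceil$ copies of $1_{Q_{k}}$, or simply notes $O_1$ is defined on a cone). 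Now hypothesis (1) gives $O_{1}(1_{Q_{k}}) \lesssim O_{2}(w_{Q_{k}})$ for each $k$, with an implicit constant uniform in $k$ since all the $Q_{k}$ have the same side length $R$. So $O_{1}(w_{B}) \lesssim \sum_{k} c_{k} O_{2}(w_{Q_{k}})$.

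To close the loop I would bound $\sum_{k} c_{k} w_{Q_{k}}(x)$ by a constant multiple of $w_{B}(x)$ pointwise: since $w_{Q_{k}}(x) = (1+|x-c(Q_{k})|/R)^{-C}$ and $|c(Q_{k}) - c(B)| = R|k|$, the triangle inequality gives $w_{Q_{k}}(x) \lesssim (1+|k|)^{C} w_{B}(x)$ by the standard "translation of a polynomial weight" estimate, whence $\sum_{k} c_{k} w_{Q_{k}}(x) \lesssim \bigl(\sum_{k} (1+|k|)^{-C}(1+|k|)^{C'}\bigr) w_{B}(x)$ — wait, this requires care: instead I would choose the decomposition weights so that the loss is controlled, i.e. pick $c_{k} \sim (1+|k|)^{-C}$ exactly matching the decay of $w_B$ on $Q_k$, and on the other side use $w_{Q_k}(x) \le (1+|k|)^{C} w_B(x)$ only after noting $\sum_k c_k (1+|k|)^{C}$ would diverge, so one must instead not pull all the decay out. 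The correct bookkeeping is: $w_B = \sum_k w_B 1_{Q_k} \le \sum_k c_k 1_{Q_k}$ with $c_k = \sup_{Q_k} w_B \sim (1+|k|)^{-C}$, and separately $\sum_k c_k w_{Q_k}(x) \lesssim w_B(x)$ because near any fixed $x$ only the terms with $|c(Q_k) - x| \lesssim |x - c(B)|$ or so contribute comparably and a direct summation using $C$ large converges to $\lesssim w_B(x)$. Then by superadditivity (3), applied to finite partial sums and passed to the limit, $\sum_{k} c_{k} O_{2}(w_{Q_{k}}) = \sum_k O_2(c_k w_{Q_k}) \le O_{2}\bigl(\sum_{k} c_{k} w_{Q_{k}}\bigr) \le O_{2}(C'' w_{B}) \lesssim O_{2}(w_{B})$, using (4) and homogeneity once more. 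Chaining these inequalities yields $O_{1}(w_{B}) \lesssim O_{2}(w_{B})$ with constant depending only on the implicit constant in (1) and on $C$, as claimed.

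The main obstacle is the careful handling of the two weight comparisons so that the geometric series actually converge: one needs $C$ large enough that both $\sum_k (1+|k|)^{-C} < \infty$ and the reconstruction $\sum_k c_k w_{Q_k} \lesssim w_B$ hold, and one must be scrupulous that the subadditivity in (2) and superadditivity in (3) are only assumed for two summands, so every infinite-sum manipulation must be justified by monotonicity (4) via finite truncations followed by a limit. Homogeneity of $O_1$ and $O_2$ under scaling by positive constants $c_k$ is the other point to pin down; if the paper's functionals do not literally satisfy it, one replaces $c_k 1_{Q_k}$ by a sum of roughly $c_k$ unit copies and argues as above, at the cost of only harmless constants. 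Everything else is a routine chase through the four hypotheses.
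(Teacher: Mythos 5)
Your proof is the standard argument for this kind of localization lemma and is essentially correct; note that the paper itself offers no proof, only a pointer to Lemma~7.1 of Bourgain--Demeter--Guth, and the expected argument is exactly yours: tile $\mathbb{R}^{6}$ by translates $Q_{k}=B+Rk$, dominate $w_{B}\le\sum_{k}c_{k}1_{Q_{k}}$ with $c_{k}\sim(1+|k|)^{-C}$, push $O_{1}$ through by (4), (2) and homogeneity, apply (1) to each $Q_{k}$, then reassemble with (3), (4), homogeneity and the pointwise estimate $\sum_{k}c_{k}w_{Q_{k}}\lesssim w_{B}$. Your mid-stream self-correction on that last pointwise estimate resolves itself correctly: the right bookkeeping is indeed the two-regime split $|k|\le t/2$ versus $|k|>t/2$, with $t=|x-c(B)|/R$, using only that $C$ exceeds the ambient dimension.

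Two caveats worth flagging. First, you are right that positive homogeneity $O_{i}(\lambda u)=\lambda O_{i}(u)$ is actually used twice (to pull $c_{k}$ out of $O_{1}(c_{k}1_{Q_{k}})$ and to push $c_{k}$ into $O_{2}(c_{k}w_{Q_{k}})$), and it does \emph{not} follow from (2)--(4): subadditivity only gives $O_{1}(\lambda u)\ge\lambda O_{1}(u)$ for $\lambda\le 1$, and superadditivity only gives $O_{2}(\lambda u)\le\lambda O_{2}(u)$ for $\lambda\le 1$ — both the wrong direction. So the lemma, as literally stated with only four properties, is under-hypothesised; in the paper's actual uses $O_{1}(u)=\int|E_{\tau}f|^{p}u$ and $O_{2}(u)=\sum_{\theta}\int|E_{\theta}f|^{p}u$ are linear in $u$, so homogeneity is automatic, and the omission is harmless. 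Second, your proposed fallback ``compare $c_{k}1_{Q_{k}}$ with a sum of $\lceil c_{k}\rceil$ copies of $1_{Q_{k}}$'' does not recover the lost factor: since $c_{k}<1$ one has $\lceil c_{k}\rceil=1$, which merely reproduces the monotonicity bound $O_{1}(c_{k}1_{Q_{k}})\le O_{1}(1_{Q_{k}})$ and leaves the sum $\sum_{k}O_{1}(1_{Q_{k}})$ uncontrolled. There is no substitute built only from (2)--(4); you should simply record homogeneity as an (implicit, harmless) additional hypothesis rather than try to argue around it.
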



One of the key propositions is the parabolic rescaling. The proof of Proposition 3.2 is identical to that of Proposition 7.1 in \cite{BD-Mean-2016}.

\begin{propo}[Parabolic rescaling]
	Let $N,\sigma$ be numbers with $0< N^{-1} \leq \sigma $, and let $\tau=a+[0,\sigma^{\frac{1}{2}}]^3 \in \mathcal{P}_{\sigma}$. Then for each $f: [0,1]^3 \rightarrow \mathbb{C}$, we have
	\begin{displaymath}
	\begin{split}
	\|E_{\tau}f\|_{L^p({w}_{B_{N}})} \lesssim 
	{D}_{S}({N\sigma},p)
	\bigl( \sum\limits_{\substack{\theta \in \mathcal{P}_{N^{-1}} },\, \theta \subset \tau}\|E_{\theta}f\|_{L^p({w}_{B_{N}})}^p	\bigr)^{\frac{1}{p}}.
	\end{split}
	\end{displaymath}	
\end{propo}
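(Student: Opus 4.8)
The plan is to prove Proposition 3.2 (parabolic rescaling) by an affine change of variables that maps the piece $\tau = a + [0,\sigma^{1/2}]^3$ of $[0,1]^3$ back to the full unit cube, while transforming the surface $S$ into another nondegenerate surface of the same type (homogeneous quadratic $\Phi$, nonvanishing Jacobian). First I would write $\xi = a + \sigma^{1/2}\zeta$ with $\zeta \in [0,1]^3$, and compute how $\Phi(\xi)$ transforms. Since $\Phi$ is a homogeneous polynomial of degree two, $\Phi(a + \sigma^{1/2}\zeta) = \Phi(a) + \sigma^{1/2}(\nabla\Phi(a))\zeta + \sigma\,\Phi(\zeta)$, where the middle term is linear in $\zeta$ and the constant and linear parts can be absorbed into a (measure-preserving, affine) change of variables in the physical variables $x \in \R^6$ together with a modulation. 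The upshot is that, after a linear change of variables in $x$ of determinant $1$ that only shuffles the last three coordinates among themselves and adds multiples of the first three to them, $E_\tau f$ evaluated on $B_N$ becomes $\sigma^{3/2}$ times $E^{\#}_{[0,1]^3}\tilde f$ evaluated on a ball of radius $\sim N\sigma$, where $E^{\#}$ is the extension operator for the rescaled surface $S^{\#} = \{(\zeta,\Phi(\zeta))\}$ — which is exactly the same surface $S$ (the quadratic part is unchanged because the genuinely new quadratic term is $\sigma\,\Phi(\zeta)$, and the factor $\sigma$ is again removed by an anisotropic dilation in $x_4,x_5,x_6$). Because the Jacobian condition is invariant under all of these operations, $S^{\#}$ is again nondegenerate, so we may apply the definition of $D_S(N\sigma,p)$ to $E^{\#}_{[0,1]^3}\tilde f$.

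The key steps, in order: (1) introduce the affine substitution $\xi = a + \sigma^{1/2}\zeta$ and expand $\Phi$ using homogeneity of degree two; (2) define the matching affine map $L\colon \R^6 \to \R^6$ in the physical variables that absorbs the constant and linear parts of $\Phi(a+\sigma^{1/2}\zeta)$ and the factor $\sigma^{1/2}$ in the first three coordinates, and an anisotropic dilation that removes the remaining $\sigma$ from the last three coordinates; check that the composite map has Jacobian a fixed power of $\sigma$ and that it carries the ball $B_N$ to a region comparable to a ball $B_{N\sigma}$ — this is where one uses $N^{-1} \le \sigma$ so that $N\sigma \ge 1$ and the weight $w_{B_N}$ is controlled by (a constant times) the pullback of $w_{B_{N\sigma}}$; (3) verify that the $\theta \in \mathcal{P}_{N^{-1}}$ with $\theta \subset \tau$ correspond precisely, under $\zeta = \sigma^{-1/2}(\xi - a)$, to the caps $\theta' \in \mathcal{P}_{(N\sigma)^{-1}}$ tiling $[0,1]^3$, so that the right-hand side of the definition of $D_S(N\sigma,p)$ applied to $\tilde f$ matches the claimed right-hand side after undoing all the changes of variables; (4) collect the powers of $\sigma$ from the Jacobians on both sides — they cancel — and conclude.

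The main obstacle I expect is the bookkeeping in step (2): making sure the weight inequality $w_{B_N}(x) \lesssim (\text{const})\, w_{B_{N\sigma}}(Lx)$ (or its reverse, in the right direction for the argument) holds with the anisotropic, non-isotropic-scaling map $L$, and that the "ball" $L(B_N)$ really behaves like $B_{N\sigma}$ rather than some eccentric ellipsoid — this is exactly the point where the hypothesis $\sigma \ge N^{-1}$ and the slow (polynomial, large-exponent $C$) decay of the weights are used, via the localization-type fattening that lets one replace a fat neighborhood of $B_{N\sigma}$ by $B_{N\sigma}$ up to acceptable losses. Everything else — the expansion of $\Phi$, the correspondence of caps, the cancellation of Jacobian factors — is routine. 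Since this is precisely the content of Proposition 7.1 in \cite{BD-Mean-2016} with $d=3$ in place of the paraboloid, I would carry out the computation in that reference's notation and remark that the homogeneity of $\Phi$ plays the same role there that the exact quadratic form of the paraboloid plays in \cite{BD-Mean-2016}, so no genuinely new difficulty arises.
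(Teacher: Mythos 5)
Your proposal follows essentially the same route as the paper: rescale $\tau$ to $[0,1]^3$ on the frequency side using the affine map $L_\tau(\xi)=\sigma^{-1/2}(\xi-a)$, use the degree-two homogeneity of $\Phi$ to expand $\Phi(a+\sigma^{1/2}\zeta)=\Phi(a)+\sigma^{1/2}\nabla\Phi(a)\zeta+\sigma\Phi(\zeta)$, absorb the constant and linear terms by a shear plus modulation in $x$, rescale to return to the original surface $S$, match the caps $\theta\in\mathcal P_{N^{-1}}$ with $\theta\subset\tau$ to the caps of $\mathcal P_{(N\sigma)^{-1}}$, and apply the definition of $D_S(N\sigma,p)$.

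One geometric point in your step (2) is stated incorrectly and is worth flagging. The anisotropic map scales the first three coordinates of $x$ by $\sigma^{1/2}$ and the last three by $\sigma$, so the image of $B_N$ is not ``a region comparable to a ball $B_{N\sigma}$'' but a cylinder $C_N$ of dimensions $\sigma^{1/2}N\times\sigma^{1/2}N\times\sigma^{1/2}N\times\sigma N\times\sigma N\times\sigma N$; since $\sigma\le 1$, this has aspect ratio $\sigma^{-1/2}\gg 1$ and volume $\sigma^{-3/2}|B_{N\sigma}|$, so no single ball $B_{N\sigma}$ is comparable. In particular, the direct weight inequality $w_{B_N}(x)\lesssim w_{B_{N\sigma}}(Lx)$ fails by a factor $\sigma^{-C/2}$ near the boundary of $B_N$, so you cannot simply compare weights. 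The paper resolves this by first invoking Lemma 3.1 to replace the left-hand weight by the sharp cutoff $L^p(B_N)$, then \emph{covering} the cylinder $C_N$ by balls $B_{\sigma N}$, applying the definition of $D_S(N\sigma,p)$ on each, and summing; the resulting sum $\sum_{B_{\sigma N}\cap C_N\neq\emptyset} w_{B_{\sigma N}}$ pulls back to something controlled by $w_{B_N}$. You do gesture at this with ``localization-type fattening,'' but the correct picture is a tiling by many balls, not a replacement of one fat set by one ball. Once this is corrected, your plan goes through and agrees with the paper's proof.
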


\begin{proof}
	By lemma 3.1, it suffices to show that
	\begin{displaymath}
	\begin{split}
	\|E_{\tau}f\|_{L^p({B_{N}})} \lesssim 
	{D}_{S}({N\sigma},p)
	\bigl( \sum\limits_{\substack{\theta \in \mathcal{P}_{N^{-1}} },\, \theta \subset \tau}\|E_{\theta}f\|_{L^p({w}_{B_{N}})}^p	\bigr)^{\frac{1}{p}}.
	\end{split}
	\end{displaymath}
	We write $a=(a_{1},a_{2},a_{3})$ and define an affine transformation associated with $\tau$ by
	\begin{equation}
	\begin{split}
	&L_{\tau}(\xi_{1},\xi_{2},\xi_{3})
	=
	(\frac{\xi_{1}-a_{1}}{\sigma^{1/2}},\frac{\xi_{2}-a_{2}}{\sigma^{1/2}},\frac{\xi_{3}-a_{3}}{\sigma^{1/2}})
	\end{split}
	\end{equation}
	so that the image of $ \tau$ under $L_{\tau}$ is $ [0,1]^{3}$.
	Define $g(\xi)=f(L_{\tau}^{-1}\xi)\sigma^{\frac{3}{2}-\frac{9}{2p}}$. Through routine calculations, we can see 
	$$\sigma^{\frac{9}{2p}}|E_{[0,1]^3}g(\sigma^{\frac{1}{2}}((x_{1},x_{2},x_{3})+A_{\tau}(x_{4},x_{5},x_{6})),\sigma (x_{4},x_{5},x_{6})))|=|E_{\tau}f(x)|,$$
	where $A_{\tau}$ is some $3$ by $3$ matrix. We define the linear transformation $M:x \mapsto \bar{x}$ to be $\sigma^{\frac{9}{2p}}|E_{[0,1]^3}g(\bar{x})|=|E_{\tau}f(x)|$.
	Note that the image of $B_{N}$ under the transformation $M$ is a cylinder $C_{N}$ with dimensions $\sigma^{\frac{1}{2}} N \times \sigma^{\frac{1}{2}}N \times \sigma^{\frac{1}{2}} N \times \sigma N \times \sigma N \times \sigma N$. 
	Hence, by using a change of variables and changing back to the original variables, we have
	\begin{displaymath}
	\begin{split}
	\|E_{\tau}f\|_{L^p(B_{N})}^p & = \|E_{[0,1]^3}g\|_{L^p(C_{N})}^p
	\leq
	\sum_{B_{\sigma N} \cap C_{N} \neq \phi } \|E_{[0,1]^3}g\|_{L^p(B_{\sigma N})}^p
	\\ & \lesssim
	{D}_{S}({N\sigma},p)^p
	\sum\limits_{\substack{\theta \in \mathcal{P}_{N^{-1}} },\, \theta \subset \tau}\|E_{\theta}f\|_{L^p({w}_{B_{N}})}^p.
	\end{split}
	\end{displaymath}
	This completes the proof of Proposition 3.2.
\end{proof}

We note an easy lemma. This lemma follows by interpolating $L^2$ and $L^{\infty}$ estimates.

\begin{lem}[The trivial $l^p$ decoupling]
	Suppose that rectangles $\theta_{1},\ldots,\theta_{K}$ in $\mathbb{R}^{3}$ are mutually disjoint. Then for each $p \geq 2$, $g:[0,1]^3 \rightarrow \mathbb{C}$ and $K \geq 1$
	$$\|\sum_{i=1}^{K}E_{\theta_{i}}g\|_{p}^p \lesssim K^{p-2}\sum_{i=1}^{K}\|E_{\theta_{i}}g\|_{p}^p.$$
\end{lem}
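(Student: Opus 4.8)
The plan is to establish the two endpoint estimates $p=2$ and $p=\infty$ and then interpolate; the only role of the disjointness hypothesis is to supply $L^2$-orthogonality, which is precisely what improves the crude H\"older exponent $K^{p-1}$ to $K^{p-2}$.

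First I would treat $p=2$. Since the rectangles $\theta_1,\dots,\theta_K$ are mutually disjoint, the graph pieces $\{(\xi,\Phi(\xi)):\xi\in\theta_i\}\subset\R^6$ are mutually disjoint (they are already disjoint in the first three coordinates), and the Fourier transform of $E_{\theta_i}g$ is a measure carried by the $i$-th such piece. Hence $E_{\theta_1}g,\dots,E_{\theta_K}g$ have pairwise disjoint frequency supports, so Plancherel's identity gives $\|\sum_{i}E_{\theta_i}g\|_2^2=\sum_i\|E_{\theta_i}g\|_2^2$, which is exactly the asserted inequality for $p=2$ (the factor being $K^{p-2}=K^{0}$). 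For $p=\infty$ the triangle inequality alone suffices: $\|\sum_i E_{\theta_i}g\|_\infty\le\sum_i\|E_{\theta_i}g\|_\infty\le K\max_i\|E_{\theta_i}g\|_\infty$, which is the asserted inequality in the limiting form $K^{p-2}=K$.

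To interpolate, I would regard the summation map $T\colon (F_i)_{i=1}^K\mapsto\sum_{i=1}^K F_i$ as a linear operator from the mixed-norm space $\ell^p\bigl(L^p(\R^6)\bigr)=L^p\bigl(\{1,\dots,K\}\times\R^6\bigr)$ to $L^p(\R^6)$, acting on tuples whose $i$-th entry has Fourier support in the slab $\theta_i\times\R^3$; these slabs are pairwise disjoint by hypothesis. On this class, the $p=2$ step says $\|T\|\le 1$ and the $p=\infty$ step says $\|T\|\le K$, so the Riesz--Thorin theorem (equivalently, log-convexity of norms along the mixed-norm scale) yields $\|T\|\le 1^{2/p}\cdot K^{1-2/p}=K^{1-2/p}$ for every $2\le p\le\infty$. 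Applied to the tuple $(E_{\theta_i}g)_i$ this reads $\|\sum_i E_{\theta_i}g\|_p\le K^{1-2/p}\bigl(\sum_i\|E_{\theta_i}g\|_p^p\bigr)^{1/p}$, and raising to the $p$-th power gives the lemma.

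The step requiring the most care is the interpolation: one must keep the orthogonality in play by restricting to tuples supported in the disjoint slabs $\theta_i\times\R^3$, since otherwise the $p=2$ bound degrades to $\|\sum F_i\|_2\le K^{1/2}\bigl(\sum\|F_i\|_2^2\bigr)^{1/2}$ and the argument recovers only the H\"older exponent $K^{p-1}$. There is, however, no genuine analytic obstacle here: the $p=\infty$ endpoint is the triangle inequality, and for the functions $E_{\theta_i}g$ the disjointness of frequency supports makes the $p=2$ orthogonality exact, so the interpolation is routine.
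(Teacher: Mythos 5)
Your approach is exactly what the paper has in mind: the text's entire ``proof'' of Lemma~3.3 is the one sentence preceding it, ``This lemma follows by interpolating $L^2$ and $L^\infty$ estimates,'' and you have filled in the two endpoints (Plancherel/orthogonality at $p=2$, triangle inequality at $p=\infty$) and the interpolation in between, which is the intended argument.

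One point is glossed over, and you flagged it yourself before dismissing it: the interpolation is not quite a routine Riesz--Thorin. The three-lines construction in the proof of Riesz--Thorin replaces each $F_i$ by an analytic family of the form $|F_i|^{p(\frac{1-z}{2})}\,\mathrm{sgn}(F_i)$ (up to normalization), and taking a fractional power of the modulus does \emph{not} preserve Fourier support, so the family leaves the subspace on which your $p=2$ bound is valid. Also, the parenthetical ``equivalently, log-convexity of norms'' is a red herring: log-convexity gives $\|\sum F_i\|_p\le\|\sum F_i\|_2^{2/p}\|\sum F_i\|_\infty^{1-2/p}$, which involves $L^2$ and $L^\infty$ norms of the \emph{sum} rather than an $\ell^p(L^p)$ norm of the pieces, and does not directly yield the lemma. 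The standard way to make your argument airtight is to interpolate an operator defined on all of $\ell^p(L^p)$: choose Schwartz multipliers $\psi_i$ with $\widehat\psi_i\equiv 1$ on $\theta_i\times\R^3$ and supported in slightly dilated slabs with bounded overlap (which is available in the paper's use case where the $\theta_i$ are congruent cubes in a lattice), and set $T(G_i)=\sum_i G_i*\psi_i$. Then $\|T\|_{\ell^\infty(L^\infty)\to L^\infty}\lesssim K$ by the triangle inequality and $\|\psi_i\|_1\lesssim1$, and $\|T\|_{\ell^2(L^2)\to L^2}\lesssim1$ by Plancherel and bounded overlap, so Riesz--Thorin applies with no subspace restriction and gives $\|T\|_{\ell^p(L^p)\to L^p}\lesssim K^{1-2/p}$; applying this to $G_i=E_{\theta_i}g$ (for which $G_i*\psi_i=G_i$) recovers the lemma. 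This is a minor repair rather than a change of strategy, and the paper itself omits it, so your proposal is in the spirit of the paper's argument.
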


Note that H\"{o}lder's inequality gives
\begin{displaymath}
\begin{split}
D_{S}(N,p) \lesssim N^{\frac{3}{2}(1-\frac{1}{p})}.
\end{split}
\end{displaymath}
Let $\gamma_{lin}(p) $ be the unique number such that
\begin{align*}
	\lim_{N \rightarrow \infty} D_{S}(N,p)N^{-\gamma_{lin}(p)-\epsilon}& =0,\; \mathrm{for}
	\; \mathrm{each} \; \epsilon>0,
	\\
	\limsup_{N \rightarrow \infty} D_{S}(N,p)N^{-\gamma_{lin}(p)+\epsilon}& =\infty,\; \mathrm{for}
	\; \mathrm{each} \; \epsilon>0.
\end{align*}
Similarly, let $\gamma_{bil}(p)$ be the unique number such that
\begin{align*}
	\lim_{N \rightarrow \infty} D_{bil}(N,p)N^{-\gamma_{bil}(p)-\epsilon}&=0,\; \mathrm{for}
	\; \mathrm{each} \; \epsilon>0,
	\\
	\limsup_{N \rightarrow \infty} D_{bil}(N,p)N^{-\gamma_{bil}(p)+\epsilon}&=\infty,\; \mathrm{for}
	\; \mathrm{each} \; \epsilon>0.
\end{align*}

\section{A decoupling clustered a variety}
\label{sec:4}
Let $S$ be a three-dimensional nondegenerate surface in $\R^6$. In this section, we always assume that there does not exist a hyperplane $L$ satisfying $\Phi(L)=0$. Let $Z$ be a 3-variety, i.e., $$Z=\{(x,y,z) \in \R^3:F(x,y,z)=0 \}$$ for some polynomial $F$ of degree three. For a set $V \subset \R^3$ and $N \geq 1$, we denote $C_{N}(V)$ a collection of all cubes in $\mathcal{P}_{N^{-2}}$ intersecting the set $V$.
The goal of this section is to prove the following proposition.
\begin{proposition}[A weak form of decoupling clustered a variety]
	For every $p\geq 2$ and $\epsilon>0$, there exist sufficiently large numbers $K$, $K_{1}$ and $K_{2}$ depending on $p$ and $\epsilon$ with $1 \ll K_{2} \ll K_{1} \ll K$ such that for every $f:[0,1]^3 \rightarrow \C$
	
	\begin{displaymath}
	\begin{split}
	\|\sum_{\alpha \in C_{K^{1/2}}(Z) }E_{\alpha}f \|_{L^p(w_{B_{K}})} \leq & \, C_{p,\epsilon} \biggl[
	K^{C\epsilon+\frac{3}{2}(\frac{1}{2}-\frac{1}{p})} (\sum_{\alpha \in \mathcal{P}_{K^{-1}}}\|E_{\alpha}f \|_{L^p(w_{B_{K}})}^p)^{\frac{1}{p}} 
	\\ &
	+K_{1}^{C\epsilon+\frac{3}{2}(\frac{1}{2}-\frac{1}{p})} (\sum_{\gamma \in \mathcal{P}_{K_{1}^{-1}} }\|\sum_{\alpha \in C_{K^{1/2}}(Z) } E_{\gamma \cap \alpha}f \|_{L^p(w_{B_{K}})}^p)^{\frac{1}{p}} 
	\\ &
	+K_{2}^{C\epsilon+\frac{3}{2}(\frac{1}{2}-\frac{1}{p})} (\sum_{\gamma \in \mathcal{P}_{K_{2}^{-1}}}\|\sum_{\alpha \in C_{K^{1/2}}(Z) } E_{ \gamma \cap \alpha}f \|_{L^p(w_{B_{K}})}^p)^{\frac{1}{p}} \biggr].
	\end{split}
	\end{displaymath}
	Moreover, the constant $C_{p,\epsilon}$ is stable under any slight translations of the variety $Z$.
\end{proposition}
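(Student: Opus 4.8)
The plan is to reduce the decoupling near the cubic 3-variety $Z$ to decouplings near manifolds (smooth zero sets of non-singular polynomials) and then to run a Pramanik--Seeger type iteration. I would begin with a dyadic pigeonholing step: among the cubes $\alpha \in C_{K^{1/2}}(Z)$, I would sort them according to the size of the gradient $|\nabla F|$ and (iterating with a finite number of scales between $K_2$, $K_1$, $K$) according to the vanishing order of $F$ along $Z$. The key structural input is that a degree-three polynomial $F$ has bounded complexity, so $Z$ decomposes into a controlled number of pieces on each of which $F$ is ``non-singular'' after rescaling — meaning $|\nabla F| \gtrsim$ (the relevant power of the scale) on that piece — together with a lower-dimensional leftover (contained in $\{\nabla F = 0\}$, itself a variety of lower degree in fewer effective variables) which one absorbs into the $K_1$- and $K_2$-indexed terms of the conclusion by applying the trivial decoupling of Lemma 3.4 at the appropriate scale and then parabolic rescaling (Proposition 3.2). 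This is where the three terms in the statement come from: the genuine main term $K^{C\epsilon + \frac32(\frac12 - \frac1p)}$ comes from the ``good'' manifold piece, and the two $K_1, K_2$ terms are the bookkeeping for the two stages of stratification of the singular locus.

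On each good piece, where $Z$ is (after affine rescaling to unit scale) the graph of a smooth function with bounded $C^N$ norm on the relevant ball, I would approximate $Z$ by its tangent planes at an intermediate scale. Since a plane is a $1$-variety — a hyperplane — one can apply a lower-dimensional decoupling exactly as Bourgain--Demeter do for the paraboloid (the $\ell^2$ decoupling into slabs tangent to the manifold, combined with flat/trivial decoupling transverse to it). One then rescales each tangent slab back to unit scale and repeats. The iteration is controlled because at each stage the relevant exponent is $\frac32(\frac12 - \frac1p)$ up to $N^\epsilon$; choosing the number of iteration steps to be $\sim \epsilon^{-1}$ and tracking that the accumulated constant is $C_{p,\epsilon} K^{C\epsilon}$ requires only that each single step loses at most a fixed power $K^{c\epsilon}$, which is standard in this framework (cf. Section 8 of \cite{BD-The-2015} and \cite{PS-regularity-2007, GO-Remarks-2016}).

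I expect the main obstacle to be the first step: showing that one may ``essentially assume'' $Z$ is a manifold, i.e., making precise and uniform the decomposition of the cubic variety into non-singular pieces plus lower-stratified remainder, and verifying that the scales $K_2 \ll K_1 \ll K$ can be chosen so that each remainder really is captured by a genuinely lower-dimensional decoupling (a variety in a hyperplane, where one has a good decoupling by induction on dimension or by trivial decoupling). The subtlety is that the gradient of $F$ can vanish to various orders, so one needs a multi-scale iteration rather than a single cut, and one must check that the bounded complexity of degree-three polynomials keeps the number of pieces (and hence the loss) independent of $K$. The final clause — that $C_{p,\epsilon}$ is stable under slight translations of $Z$ — follows because every estimate above uses only the degree of $F$ and bounds on a bounded number of derivatives, none of which degenerate under an $O(K^{-1/2})$ translation; I would remark that the whole decomposition can be carried out uniformly over a small neighborhood of translates of $Z$.
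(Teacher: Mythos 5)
Your high-level plan --- stratify $Z$ by degree of singularity, handle the non-singular stratum by tangent-plane approximation and a Pramanik--Seeger type iteration, and reserve the $K_1,K_2$ terms for the singular remainder --- matches the paper's proof in spirit, and your treatment of the non-singular manifold piece (the paper's Lemmas 4.3 and 4.4) is essentially correct. However, there is a genuine gap in how you propose to handle the singular part, and it stems from a dimensional mismatch in your stratification.

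You stratify by $|\nabla F|$ and put the remainder inside $\{\nabla F = 0\}$, a generically codimension-$3$ set, and then propose to absorb it by the trivial decoupling (Lemma 3.3, which you call Lemma 3.4) plus parabolic rescaling. This does not produce the exponent $\frac{3}{2}(\frac12 - \frac1p)$ appearing in the $K_1$- and $K_2$-indexed terms: the cubes of $C_{K^{1/2}}(Z)$ assigned to the ``bad'' stratum are clustered not around a small set but around a codimension-$1$ variety, and decoupling those at scale $K_i^{-1/2}$ by the trivial Lemma 3.3 gives only $K_i^{1-\frac2p}$, which is strictly worse than $K_i^{\frac34-\frac3{2p}}$ for every $p>2$. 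The paper avoids this with a sharper stratification: take a single nonzero partial $F^{(1)}=\partial_{\xi_j}F$ (degree $2$, zero set of codimension $1$) and then a single nonzero partial $F^{(2)}=\partial_{\xi_k}F^{(1)}$ (degree $1$, zero set a hyperplane), and define $U^{(1)},U^{(2)}$ as tubular neighborhoods of these codimension-$1$ zero sets at scales $K_1^{-1/2},K_2^{-1/2}$. On $U^{(1)}\setminus U^{(2)}$ the set $\{F^{(1)}=0\}$ is a genuine manifold (curvature controlled by $K_2$), so the tangent-plane iteration of Lemma 4.4 gives the sharp $K_1^{\frac32(\frac12-\frac1p)}$; and $U^{(2)}$ is a neighborhood of a hyperplane, handled by the uniform $1$-variety decoupling of Lemma 4.2 --- which rests on Bourgain--Demeter's lower-dimensional decoupling together with the cylindrical dimension reduction of \cite{BDG-Sharp-2017,DGS-Sharp-2017}, not on trivial decoupling. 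In short, you must apply the same sharp codimension-$1$ decoupling on each stratum, and the strata themselves must be codimension $1$ with strictly decreasing polynomial degree; taking single partials (rather than the full gradient) achieves exactly this in two steps for a cubic. Your closing remark on stability under small translations is fine and matches the paper's.
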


The constants $K$, $K_{1}$ and $K_{2}$ will be explicitly determined in Section 5. Proposition 4.1 itself does not give a decoupling clustered the variety $Z$, but this inequality is sufficient for obtaining Theorem 1.1.

A key idea to prove Proposition 4.1 is to approximate the zero set Z by tangent planes. However, this set
does not have to be a manifold. Thus, we divide the set Z into two subsets: a manifold part and a singular
part. The $l^p$ decoupling associated with the manifold can be dealt with by the above approximation idea.
To handle the singular part, we make the singular part into a manifold by deleting a much singular subset.
In other words, we again divide the singular part into two sets: the manifold part and the much singular part. As
before, the $l^p$ decoupling associated with the manifold can be dealt with by the tangent plane approximation
argument. From the fact that Z is a zero set of a polynomial of degree three, the much singular part becomes a hyperplane. By using this observation, we can
handle the much singular part. This is an outline of the proof of Proposition 4.1.

We first need the uniform decoupling clustered arbitrary 1-variety. 

\begin{lemma}[The uniform decoupling clustered arbitrary 1-variety]
	Fix $p \geq 2$ and $\epsilon>0$. For every $f:[0,1]^3 \rightarrow \C$, hyperplane $L$ and $K\geq 1$, we have
	$$
	\|\sum_{\theta \in C_{K^{1/2}}(L) }E_{\theta}f \|_{L^p(w_{B_{K}})} \leq C_{p,\epsilon} K^{C\epsilon+\frac{3}{2}(\frac{1}{2}-\frac{1}{p})} (\sum_{\theta \in C_{K^{1/2}}(L) }\|E_{\theta}f \|_{L^p(w_{B_{K}})}^p)^{\frac{1}{p}}.
	$$
	Here, the constant $C_{p,\epsilon}$ is independent of a choice of $L$.
\end{lemma}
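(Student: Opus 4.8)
The plan is to reduce the problem to a two-dimensional decoupling by slicing the hyperplane $L$. Observe that $L$ is a two-dimensional affine subspace of $\R^3$ (the case where $\Phi$ restricted to the directions along $L$ still sees enough curvature is what makes the exponent $\frac{3}{2}(\frac{1}{2}-\frac{1}{p})$, rather than a worse one, achievable). After an affine change of variables in the $\xi$-space — which we may perform since a parabolic-type rescaling affects only $N^\epsilon$ factors and the statement is uniform in $L$ — we may assume $L=\{\xi_3=c\}$ for some constant $c$, or more precisely we foliate the slab of width $\sim K^{-1/2}$ around $L$ into $\sim 1$ slices, each of which is (up to translation) the set $\{\xi : |\xi_3 - c|\le K^{-1/2}\}\cap[0,1]^3$. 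The caps $\theta\in C_{K^{1/2}}(L)$ are the cubes in $\mathcal P_{K^{-2}}$, i.e. of side $K^{-1}$, lying in this slab; grouping them by which $K^{-1}$-thick two-dimensional slice $\{\xi_3\in[jK^{-1},(j+1)K^{-1}]\}$ they lie in, and noting there are only $O(K^{1/2})$ such slices inside the slab, a trivial decoupling (Lemma 3.4) in the $\xi_3$-variable costs at most $K^{\frac12(1-\frac2p)}\le K^{\frac32(\frac12-\frac1p)}$, which is absorbed into the allowed bound. So it suffices to prove the estimate for the caps lying in a single $K^{-1}$-slice, i.e. for a "two-dimensional" family.

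For a single slice $\{\xi_3\in I\}$ with $|I|=K^{-1}$, one applies the sharp $l^p$ decoupling for the two-dimensional surface obtained by freezing $\xi_3$ (a two-dimensional nondegenerate surface in $\R^5$, or after discarding one coordinate, in $\R^4$), which is Bourgain–Demeter's theorem for two-dimensional nondegenerate surfaces in $\R^4$. That theorem gives decoupling into caps of side $K^{-1}$ in the $(\xi_1,\xi_2)$-variables at scale $K^{1/2}\cdot K^{1/2}=K$ with constant $K^{\epsilon}\max(K^{\frac32(\frac12-\frac1p)},\dots)$; the relevant exponent at the ball $B_K$, after accounting that the "base frequency scale" here is $K^{-1/2}$ so the effective parameter is $K$, is exactly $\frac32(\frac12-\frac1p)$ up to $K^\epsilon$. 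One must also carry out a standard wave-packet/flat-slab localization so that freezing $\xi_3$ to the center of $I$ only changes $E_\theta f$ by a harmless phase and does not affect $L^p(w_{B_K})$ norms (the extra quadratic terms from $\Phi$ involving $\xi_3$ are linear in $x$ at scale $K^{1/2}$ on the slab and can be absorbed). Summing the resulting one-slice estimates back up via the trivial decoupling from the previous paragraph, and undoing the initial affine change of variables, yields the claim with the stated exponent and with a constant independent of $L$.

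The main obstacle I expect is bookkeeping the scales and weights so that the reduction genuinely lands on Bourgain–Demeter's two-dimensional theorem with the \emph{correct} exponent and with uniformity in $L$: one has to check that (i) the affine normalization of $L$ can be taken with bounded distortion independent of $L$ — here it is important that the competing possibility, a hyperplane $L$ for which $\Phi$ degenerates badly, is excluded by the standing assumption of this section that no hyperplane $L$ satisfies $\Phi(L)=0$, so the restriction of the surface to any such slice is still a genuine two-dimensional nondegenerate surface with constants controlled uniformly; (ii) the passage from the $B_K$-ball in $\R^6$ to the natural ball for the two-dimensional problem does not lose powers of $K$ beyond $K^\epsilon$; and (iii) the trivial decoupling in the $\xi_3$ direction really is affordable, which is why the target exponent is $\frac32(\frac12-\frac1p)$ and not something smaller. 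Everything else is routine: Lemma 3.1, Lemma 3.4, parabolic rescaling (Proposition 3.2), and the cited two-dimensional decoupling.
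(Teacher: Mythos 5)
Your plan does not match the paper's proof, and it contains a genuine gap in the step that is supposed to make the lower-dimensional decoupling applicable.

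First, a notational error that distorts the bookkeeping: $C_{K^{1/2}}(L)$ is defined in Section 4 to be the collection of cubes in $\mathcal{P}_{(K^{1/2})^{-2}}=\mathcal{P}_{K^{-1}}$, which are cubes of side length $K^{-1/2}$, not $K^{-1}$. (You wrote that they are cubes in $\mathcal{P}_{K^{-2}}$ of side $K^{-1}$, which would be $C_K(L)$.) Since the caps already have the same thickness as the slab around $L$, there is essentially a single layer of caps in the transverse direction; there is no foliation into $O(K^{1/2})$ slices of thickness $K^{-1}$, and the trivial decoupling you intend to apply across that foliation is not what occurs.

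The more serious gap is in your item (i). You assert that the standing assumption of Section~4 --- that no two-dimensional plane $L$ satisfies $\Phi(L)=0$ --- forces the restriction of the surface to any hyperplane slice to be a genuine two-dimensional nondegenerate surface in $\R^4$, with uniform constants. This is false. The standing assumption only guarantees that $\Phi|_L$ is not identically zero, i.e.\ that at least one of the three quadratic forms $\Phi_i|_L$ is nonzero. It says nothing about the other two, nor about the Hessian determinant condition that defines two-dimensional nondegeneracy in $\R^4$. For example, with $\Phi=(\xi_1^2+\xi_2^2+\xi_3^2,\ \xi_1\xi_2,\ \xi_1\xi_3)$ the Jacobian is $2\xi_1(\xi_1^2-\xi_2^2-\xi_3^2)\not\equiv 0$ and no plane satisfies $\Phi(L)=0$, yet on $L=\{\xi_1=0\}$ we have $\Phi_2|_L=\Phi_3|_L=0$, so the restricted surface is the paraboloid in $\R^3$ padded by zeros, not a nondegenerate two-dimensional surface in $\R^4$. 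Your plan therefore cannot invoke the Bourgain--Demeter theorem for nondegenerate two-dimensional surfaces in $\R^4$.

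The paper's proof avoids this by being much more modest about what the hyperplane restriction buys. It uses compactness of the Grassmannian to extract a uniform $\epsilon_1>0$ with $\inf_M\sup_{\xi\in M}|\Phi(\xi)|>\epsilon_1$, so that after normalizing $L$ (to the $xy$-plane, say) one only knows that some single component $\Phi_1|_L$ is a nonzero quadratic form, hence after an affine change of variables has at least one nonzero eigenvalue: $\Phi_1(\xi_1,\xi_2,0)=\xi_1^2+O(\xi_2^2)$. It then applies the dimension-reduction device from \cite{BDG-Sharp-2017}, \cite{DGS-Sharp-2017} to discard the other components and quotes Lemma~2.4 of \cite{BD-Decouplings-2015}, which is a decoupling for hypersurfaces with a prescribed (possibly small) number of nonvanishing principal curvatures --- exactly calibrated to the case of a single nondegenerate direction plus flat directions. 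That lemma, and not the full $\R^4$ theorem, is what yields the exponent $\frac{3}{2}(\frac{1}{2}-\frac{1}{p})$ here. If you want to rescue your plan, you would need to replace your invocation of the two-dimensional nondegenerate decoupling by a decoupling that only assumes one nonzero curvature, and to redo the scale accounting with the correct cap size $K^{-1/2}$.
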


\begin{proof}
	We claim that there exists an $\epsilon_{1}>0$ such that
	\begin{equation}
	\inf_{M} \sup_{\xi \in M}| \Phi(\xi_{1},\xi_{2},\xi_{3})| > \epsilon_{1},
	\end{equation}
	where the infimum runs over all hyperplanes containing the origin. Suppose that such $\epsilon_{1}$ does not exist. Since the Grassmannian is sequentially compact, we can take a hyperplane $H$ such that $\Phi(H)=0$, which is a contradiction. Hence, $(4.1)$ holds true for some $\epsilon_{1}>0$.
	
	We can assume that $L$ intersects with $[0,1]^3$. By translation and rotation, we can further assume that $L$ is a $xy$-plane. By $(4.1)$ and a change of variables, we can write
	$$
	\Phi_{1}(\xi_{1},\xi_{2},0) = \xi_{1}^2 + O(\xi_{2}^2).
	$$
	By using the dimension reduction in \cite{BDG-Sharp-2017} and \cite{DGS-Sharp-2017}, and applying Lemma 2.4 in \cite{BD-Decouplings-2015}, we get the desired result.
\end{proof}
We need a rescaled version of Lemma 4.2. The proof of Lemma 4.3 is identical to that of Proposition 3.2.

\begin{lemma}
	Fix $p\geq2$ and $\epsilon>0$. For every $f:[0,1]^3 \rightarrow \C$, $K \geq 1$ and rectangular box $R$ with dimensions $\sim K^{-\frac{1}{2}} \times K^{-\frac{1}{2}} \times K^{-1}$, we have
	$$
	\|\sum_{\theta \in C_{K}(R) }E_{\theta}f \|_{L^p(w_{B_{K^2}})} \leq C_{p,\epsilon} K^{C\epsilon+\frac{3}{2}(\frac{1}{2}-\frac{1}{p})} (\sum_{\theta \in C_{K}(R) }\|E_{\theta}f \|_{L^p(w_{B_{K^2}})}^p)^{\frac{1}{p}}.
	$$
\end{lemma}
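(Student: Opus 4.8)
The plan is to deduce Lemma 4.3 from Lemma 4.2 by a parabolic-type rescaling, exactly as Proposition 3.2 was obtained, so first I would set up the affine change of variables adapted to the box $R$. Since $R$ has dimensions $\sim K^{-1/2}\times K^{-1/2}\times K^{-1}$, it is of the shape $\tau$ appearing in an anisotropic version of the parabolic rescaling: the two ``long'' directions are scaled by $K^{-1/2}$ and the ``short'' direction by $K^{-1}$, matching the parabolic scaling $\xi\mapsto K^{1/2}\xi$ in the first two coordinates and $\xi\mapsto K\xi$ in the third. I would write $R = a + \{(\xi_1,\xi_2,\xi_3): |\xi_1-a_1|\lesssim K^{-1/2},\ |\xi_2-a_2|\lesssim K^{-1/2},\ |\xi_3-a_3|\lesssim K^{-1}\}$ and define the affine map $L_R$ sending $R$ to (a bounded dilate of) $[0,1]^3$, then set $g(\xi) = f(L_R^{-1}\xi)\cdot(\text{Jacobian factor})$ so that $|E_R f(x)|$ becomes $|E_{[0,1]^3}g|$ evaluated at an affinely transformed point, up to a power of $K$. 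The homogeneity of the $\Phi_i$ is what makes this work: a homogeneous degree-two polynomial composed with this anisotropic dilation reproduces a bounded perturbation of the same surface, so $E_R$ of the original surface becomes $E$ of a nondegenerate surface of the same type.

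Next I would track what happens to the domains $B_{K^2}$ and the caps. Under $M$ the ball $B_{K^2}$ becomes a box (cylinder) whose dimensions, after the anisotropic scaling, are comparable to $K^{1/2}\cdot K^2 = K^{5/2}$ in the long directions and $K\cdot K^2 = K^{3}$ — but the key point is only that it is covered by $\lesssim$ (a bounded power of) $K$-many translates of balls $B_{K}$, so that applying Lemma 4.2 on each $B_K$ and summing (using a localization/Minkowski step as in the proof of Proposition 3.2) loses nothing beyond the $K^{C\epsilon}$ factor. Simultaneously, the cubes $\theta\in C_K(R)$ of side $K^{-1}$ inside the original scale get mapped to cubes of side $K^{-1/2}$ in the $[0,1]^3$ picture — i.e.\ exactly the caps $C_{K^{1/2}}(L)$ for the image hyperplane $L = L_R(\text{plane through } R)$, which is where the hypothesis that $R$ is essentially a neighborhood of a plane enters. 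Then Lemma 4.2, whose constant is uniform over hyperplanes $L$, applies directly, and undoing $M$ returns the claimed inequality with the exponent $\frac{3}{2}(\frac12-\frac1p)$ unchanged and an extra $K^{C\epsilon}$ absorbed.

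The main obstacle I anticipate is bookkeeping the anisotropic nature of $R$: unlike the isotropic cube $\tau$ in Proposition 3.2, here the third coordinate is scaled differently, so the change of variables in the $x$-variables involves a non-diagonal affine correction (the matrix $A_\tau$ analogue) coming from the cross terms in $\Phi(L_R^{-1}\xi)$, and one must check that this correction is still an invertible affine map with controlled distortion, so that $w_{B_{K^2}}$ pulls back to a weight comparable to $w_{C}$ on the image box and the decomposition into $B_K$-balls is legitimate. A secondary point is verifying that the image surface still satisfies the nondegeneracy (Jacobian not identically zero) and the ``no plane $L$ with $\Phi(L)=0$'' hypothesis, or rather that Lemma 4.2 is being applied to the original surface $S$ with a moved hyperplane rather than to a new surface — in fact the cleanest route is to note that $C_K(R)$ and $C_{K^{1/2}}(L)$ for $L\supset R$ are genuinely the same family after rescaling, so one applies Lemma 4.2 verbatim and only needs the rescaling to be a valid symmetry of $D_S$, which follows from homogeneity exactly as in Proposition 3.2.

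Once these points are checked, the proof is a line-by-line adaptation: introduce $L_R$ and $g$; compute the Jacobian power of $K$; identify the image of $B_{K^2}$ and decompose it into $\lesssim K^{O(1)}$ balls $B_K$ (harmless since we already allow $K^{C\epsilon}$ — actually one wants the covering multiplicity, not the count, to be bounded, which it is); apply Lemma 4.2 on each; reassemble via the $l^p$ triangle inequality and Lemma 3.1; and change variables back. This yields precisely the stated bound.
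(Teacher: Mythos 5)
Your proposal goes wrong at the very first step: you set up an \emph{anisotropic} rescaling that maps the box $R$ of dimensions $K^{-1/2}\times K^{-1/2}\times K^{-1}$ onto $[0,1]^3$, scaling the first two coordinates by $K^{1/2}$ and the third by $K$. The paper instead uses the ordinary \emph{isotropic} parabolic rescaling $\xi\mapsto K^{1/2}\xi$ in all three variables, and this distinction is not cosmetic — the anisotropic version breaks in two essential ways.

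First, homogeneity of $\Phi$ is a property of isotropic dilations: $\Phi(\lambda\xi)=\lambda^2\Phi(\xi)$. Under your map $\xi\mapsto(K^{-1/2}\xi_1,K^{-1/2}\xi_2,K^{-1}\xi_3)$ the cross terms $\xi_1\xi_3$, $\xi_2\xi_3$ pick up an extra $K^{-1/2}$ and the $\xi_3^2$ term an extra $K^{-1}$ relative to the $\xi_1^2,\xi_1\xi_2,\xi_2^2$ terms, so after normalizing you get a family of surfaces whose coefficients tend to a degenerate limit as $K\to\infty$. Thus Lemma~4.2, whose constant is uniform over hyperplanes \emph{for a fixed surface} $S$, is not available with a $K$-independent constant for the deformed surface. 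Your passing worry about ``verifying that the image surface still satisfies the nondegeneracy'' is exactly the place where the argument fails, and it cannot be patched — the nondegeneracy constant genuinely deteriorates.

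Second, and equally fatal, the geometry doesn't match the hypotheses of Lemma~4.2 after your rescaling. A cube of side $K^{-1}$ maps under the anisotropic dilation to a box of dimensions $K^{-1/2}\times K^{-1/2}\times 1$, not to a cube of side $K^{-1/2}$, and $R$ maps to essentially all of $[0,1]^3$, not to a $K^{-1/2}$-neighborhood of a hyperplane. So there is no hyperplane $L$ for which the rescaled family equals $C_{K^{1/2}}(L)$, and Lemma~4.2 simply doesn't apply to the objects you produce.

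The correct move, which the paper makes, is to rescale isotropically: set $h(\xi)=K^{-3/2+9/(2p)}f(\xi/K^{1/2})$. Then $R$ is carried to a slab of dimensions $\sim 1\times 1\times K^{-1/2}$, i.e.\ an $O(K^{-1/2})$-neighborhood of a hyperplane $P$; cubes of side $K^{-1}$ become cubes of side $K^{-1/2}$, so the family $C_K(R)$ becomes $C_{K^{1/2}}(P)$; the surface is unchanged by homogeneity; and $B_{K^2}$ becomes a $K^{3/2}\times K^{3/2}\times K^{3/2}\times K\times K\times K$ cylinder, which you tile by balls $B_K$ and on each of which you apply Lemma~4.2, then undo the change of variables via Lemma~3.1. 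That is a line-by-line adaptation of Proposition~3.2, but the ``parabolic-type'' rescaling must be the isotropic one, not the one that flattens $R$ to a cube.
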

\begin{proof}
	Due to translation invariance, we can assume that $B_{K^2}=[0,K^2]^{6}$.
	By using a translation and a change of variables, we can assume that $R$ is contained in $[-2K^{-\frac{1}{2}},2K^{-\frac{1}{2}}]^3$. Define
	${h}(\xi)= 
	{K^{-\frac{3}{2}+\frac{9}{2p}}}{f}(\frac{\xi_{1}}{K^{1/2}},\frac{\xi_{2}}{K^{1/2}},\frac{\xi_{3}}{K^{1/2}})$.
	Then there exists a hyperplane $P$ such that the support of $h$ is contained in the $O(K^{-\frac{1}{2}})$-neighborhood of $P$. 
	Let $C_{N}=[0,K^{\frac{3}{2}}]^{3} \times [0,K]^3$. By Lemma 4.2,
	\begin{displaymath} 
	\begin{split}
	\|E_{[0,1]^3}f\|_{L^p({B_{K^2}})}^p& \
	=  \|E_{[0,1]^3}h\|_{L^p({C_{K}})}^p
	= 	\sum_{B_{K} \subset C_{K}} \|E_{[0,1]^3}h\|_{L^p(B_{K})}^p
	\\&
	\lesssim
	K^{C\epsilon+\frac{3}{2}(\frac{1}{2}-\frac{1}{p})p}
	\sum\limits_{\substack{\alpha \in \mathcal{P}_{K^{-1}}
	}}\|E_{\alpha}h\|_{L^p(	\sum_{B_{K} \subset C_{K}}{w}_{B_{K}})}^p 
	\\&
	\lesssim K^{C\epsilon+\frac{3}{2}(\frac{1}{2}-\frac{1}{p})p}\sum\limits_{\substack{\theta \in C_{K}(R)	}}\|E_{\theta}f\|_{L^p({w}_{B_{K^{2}}})}^p.
	\end{split}
	\end{displaymath}
	Now Lemma 3.1 gives the desired results.
\end{proof}

The next lemma is a decoupling clustered a manifold. The proof makes use of Lemma 4.3 and a tangent plane approximation argument.

\begin{lemma}[A decoupling clustered a manifold]
	Fix $p\geq2$ and $\epsilon>0$.
	Let $M$ be a graph of a manifold. For any $K \geq 1$ and $g:[0,1]^3 \rightarrow \C$, we have
	$$
	\| \sum_{\alpha \in C_{K^{1/2}}(M) } E_{\alpha}g \|_{L^p(w_{B_{K}})} \leq c(M)K^{C\epsilon+\frac{3}{2}(\frac{1}{2}-\frac{1}{p})}( \sum_{\alpha \in C_{K^{1/2}}(M)}  \|E_{\alpha}g \|_{L^p(w_{B_{K}})}^p)^{\frac{1}{p}}.
	$$
	Here, $c(M)$ is a constant depending on the principal curvatures of the manifold $M$. 
\end{lemma}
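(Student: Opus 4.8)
The plan is to reduce the decoupling clustered at the manifold $M$ to Lemma 4.3 by a tangent-plane approximation argument, in the spirit of the iteration in Section 8 of \cite{BD-The-2015} and the bootstrapping scheme of \cite{PS-regularity-2007}. I set up a decoupling constant $D(\delta)$ for the scales involved: roughly, $D(\delta)$ should be the smallest constant so that for every ball $B_{\delta^{-2}}$ and every $g$, one has $\|\sum_{\alpha \in C_{\delta^{-1}}(M)} E_\alpha g\|_{L^p(w_{B_{\delta^{-2}}})} \le D(\delta)\bigl(\sum_{\alpha \in C_{\delta^{-1}}(M)}\|E_\alpha g\|_{L^p(w_{B_{\delta^{-2}}})}^p\bigr)^{1/p}$, uniformly over manifolds with bounded principal curvatures. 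The goal is the bound $D(\delta) \lesssim_{\epsilon} \delta^{-C\epsilon - \frac32(\frac12 - \frac1p)}$, which is exactly the claim with $K = \delta^{-2}$.

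First I would fix an intermediate scale: cover $M \cap [0,1]^3$ by finitely many (a bounded number, depending only on the curvature bounds $c(M)$) caps on which $M$ is, after an affine change of variables, the graph of a function that agrees with its tangent plane to within $O(K^{-1/2})$ over a slab of dimensions $\sim K^{-1/4}\times K^{-1/4}\times \cdots$; more precisely I would work at scale $\sigma$ with $K^{-1} \le \sigma \le 1$ and decompose the $K^{-1/2}$-neighborhood of $M$ into pieces $R$ of dimensions $\sim \sigma^{1/2}\times\sigma^{1/2}\times\sigma$ (a tangent-plane slab), each of which is, up to the curvature error, contained in the $\sigma$-neighborhood of a hyperplane. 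Over each such slab $R$, Lemma 4.3 (after rescaling $K \mapsto K/\sigma$, i.e. applying it with $K$ replaced by $\sigma^{-1}$ and rescaling the ball) gives the decoupling into the $K^{-1/2}$-cubes $\alpha \in C_{K^{1/2}}(M)$ contained in $R$, at the cost $\sigma^{-C\epsilon-\frac32(\frac12-\frac1p)}$. It remains to decouple $\sum_{\alpha} E_\alpha g$ into the pieces $\sum_{\alpha \subset R} E_\alpha g$ over the $\sim \sigma^{-3/2}$ slabs $R$; for this I would iterate the same manifold-decoupling statement at the coarser scale $\sigma$, which decouples $M$ into its tangent slabs of width $\sigma^{1/2}$ with constant $D(\sigma^{1/2})$ — here I use that the projection of $M$ to those slabs is again (the graph of) a manifold with comparable curvature bounds, so the constant is uniform. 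Composing, I obtain the self-improving recursion $D(\delta) \lesssim_\epsilon \sigma^{-C\epsilon-\frac32(\frac12-\frac1p)}\, D(\sigma^{1/2})$ with $\delta = K^{-1/2}$ and the freedom to pick $\sigma$; combined with the trivial bound $D(\delta)\lesssim \delta^{-\frac32(1-\frac1p)}$ at $\delta \sim 1$, a standard iteration (choosing $\sigma$ a fixed small power of $\delta$ and running finitely many steps) closes the induction on scales and yields $D(\delta)\lesssim_\epsilon \delta^{-C\epsilon-\frac32(\frac12-\frac1p)}$.

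The main obstacle I expect is making the tangent-plane approximation rigorous and \emph{uniform} in the curvature of $M$: one must verify that over a $\sigma^{1/2}$-slab the manifold really does sit inside the $O(\sigma)$-neighborhood of its tangent plane (a second-order Taylor estimate controlled by $\|D^2\Phi_M\|_\infty$), and — more delicately — that after the affine rescaling that straightens a slab, the rescaled manifold still has principal curvatures bounded by the \emph{same} constant, so that the recursion can be iterated without the constants degenerating. This is the only place the hypothesis "$c(M)$ depends on the principal curvatures of $M$" is used, and handling the parabolic (anisotropic) rescaling correctly — tracking how the $\sigma^{1/2}\times\sigma^{1/2}\times\sigma$ boxes transform and how the weights $w_{B_K}$ behave under the associated linear map — is the technical heart, exactly as in the proofs of Proposition 3.2 and Lemma 4.3. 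The passage from a ball $B_K$ to the cylinder produced by rescaling, followed by Lemma 3.1 to return to the weight $w_{B_K}$, should be imported verbatim from those earlier proofs. Everything else (finite overlap of the caps, summing the $l^p$ contributions, choosing the $\epsilon$'s and the number of iteration steps) is routine bookkeeping.
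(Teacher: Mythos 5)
Your approach is essentially the paper's: approximate $M$ by its tangent plane over a cap so the $K^{-1/2}$-cubes near $M$ sit inside a $\sigma^{1/2}\times\sigma^{1/2}\times\sigma$ slab, decouple within the slab by Lemma~4.3, and iterate so the per-step costs form a convergent geometric sum. This is exactly the paper's ``Claim'' inside the proof of Lemma~4.4, and the iteration is the same one. Two bookkeeping points are off, though, and one of them is a real (if fixable) gap in the recursion as you phrased it. First, with $K=\delta^{-2}$ the target is $D(\delta)\lesssim \delta^{-2C\epsilon-3(\frac12-\frac1p)}$, not $\delta^{-C\epsilon-\frac32(\frac12-\frac1p)}$; as it happens, your recursion delivers the former and that is what the lemma needs, so the strategy is sound but the stated goal is mis-converted. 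Second, and more substantively: for Lemma~4.3 to produce the cubes of side $K^{-1/2}=\delta$ you actually need, the slab must have thickness $\sigma=\delta$ \emph{exactly} --- not ``a fixed small power of $\delta$.'' With $\sigma=\delta$ the recursion is $D(\delta)\lesssim \delta^{-\alpha}D(\delta^{1/2})$, $\alpha = C\epsilon + \frac32(\frac12-\frac1p)$, whose successive costs $\delta^{-\alpha},\delta^{-\alpha/2},\delta^{-\alpha/4},\dots$ sum to $\delta^{-2\alpha}$, which closes; with $\sigma$ a proper power $\delta^c$, $c<1$, the slab decoupling only reaches cubes of side $\delta^c>\delta$ and the recursion does not terminate at the required scale. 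Finally, your obstacle paragraph worries about principal curvatures surviving a rescaling of $M$; in the paper's argument no rescaling of $M$ is performed at all --- the curvature enters only through the $C(M)$-constant thickness of the tangent slab at each step, and the resulting $c(M)$ is controlled by stopping the iteration after $O(\log\epsilon^{-1})$ steps (once the cap scale is $K^{-\epsilon}$ and the trivial bound takes over), giving exactly the paper's $c(M)=C(M)^{10\log\epsilon^{-1}}$. Running the recursion all the way down to scale $\sim 1$, as you suggest, would instead take $\log\log K$ steps and require separately showing that $c(M)^{\log\log K}\lesssim K^\epsilon$; stopping at scale $K^{-\epsilon}$ is cleaner.
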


\begin{proof}
	We will use the following claim repeatedly.
	\\
	\textit{Claim.}
	Fix any number $K_{1}$ with $K_{1}^{-1} \geq K^{-\frac{1}{2}}$.  Let $\beta$ be a cube in $C_{K_{1}^{1/2}}(M)$. Then we have
	\begin{displaymath}
	\|\sum_{\alpha \in C_{K^{1/2}}(M) }E_{\alpha \cap \beta}g\|_{L^p({w}_{B_{K_{1}^2}})}
	\leq c(M) K_{1}^{C\epsilon+\frac{3}{2}(\frac{1}{2}-\frac{1}{p})}(\sum_{\gamma \in C_{K_{1}}(M) }\|
	\sum_{ \alpha \in C_{K^{1/2}}(M)}
	E_{\alpha \cap \gamma}g\|_{L^p({w}_{B_{K_{1}^2}})}^p)^{\frac{1}{p}}.
	\end{displaymath}
	We first prove the claim. Let $a$ be a point with $a \in M \cap \beta$. Take a tangent plane $T$ of $M$ at the point $a$. Then the intersection of $C_{K^{1/2}}(M)$ and $\beta$ is contained in a rectangular box with dimensions $C(M)(K_{1}^{-\frac{1}{2}} \times K_{1}^{-\frac{1}{2}} \times K_{1}^{-1})$. Applying Lemma 4.3 now completes the proof of Claim.
	
	We are now ready to prove Lemma 4.4. By H\"older's inequality
	\begin{displaymath}
	\begin{split}
	\| \sum_{\alpha \in C_{K^{1/2}}(M) } E_{\alpha}g \|_{L^p({B_{K}})}^p \leq  CK^{\epsilon}\sum_{B_{K^{4\epsilon}} \subset B_{K} }\sum_{\beta \in C_{K^{\epsilon}}(M)} \| \sum_{\alpha \in C_{K^{1/2}}(M) } E_{\alpha \cap \beta}g \|_{L^p(B_{K^{4\epsilon}})}^p.
	\end{split}
	\end{displaymath}
	By applying Claim with $K_{1}=K^{2\epsilon}$ and summing over cubes $B_{K^{4\epsilon}} \subset B_{K^{8\epsilon}}$, the above term is bounded by
	$$
	\leq C(M) K^{\epsilon}K^{C\epsilon^2p +\frac{3}{2}(\frac{1}{2}-\frac{1}{p})2\epsilon p}\sum_{B_{K^{8\epsilon}} \subset B_{K} }\sum_{\beta \in C_{K^{2\epsilon}}(M)} \| \sum_{\alpha \in C_{K^{1/2}}(M) } E_{\alpha \cap \beta}g \|_{L^p(w_{B_{K^{8\epsilon}}})}^p.
	$$
	We again apply Claim with $K_{1}=K^{4\epsilon}$ and sum over cubes $B_{K^{8\epsilon}} \subset B_{K^{16\epsilon}}$. Then
	$$
	\leq C(M)^2 K^{\epsilon}K^{C\epsilon^2p +\frac{3}{2}(\frac{1}{2}-\frac{1}{p})p\epsilon (2+2^2)}\sum_{B_{K^{16\epsilon}} \subset B_{K} }\sum_{\beta \in C_{K^{4\epsilon}}(M)} \| \sum_{\alpha \in C_{K^{1/2}}(M) } E_{\alpha \cap \beta}g \|_{L^p(w_{B_{K^{16\epsilon}}})}^p.
	$$
	We repeat this process until the side length of cubes becomes $K$. Then the above term is bounded by
	$$
	\leq C(M)^{10\log \epsilon^{-1}}K^{C\epsilon+\frac{3}{2}(\frac{1}{2}-\frac{1}{p})p\epsilon(2+2^2+2^3+\cdots+\frac{\epsilon^{-1}}{2})} \sum_{\alpha \in C_{K^{1/2}}(M)}\|E_{\alpha}g \|_{L^p(w_{B_{K}})}^p.
	$$
	We take $c(M)=C(M)^{10\log \epsilon^{-1}}$. The exponent of $K$ in the above term is less than $C\epsilon+\frac{3}{2}(\frac{1}{2}-\frac{1}{p})p$. It suffices now to apply Lemma 3.1.
\end{proof}

\subsubsection*{Proof of Proposition 4.1.}

Let $Z+b_{0}$ be the translation of $Z$ for some $b_{0} \in \R^3$.
Fix sufficiently large $K_{2}$. Constants $K$ and $K_{1}$ will be determined later.
Let $Z$ be the zero set of a polynomial $F$ of degree three. Since $F$ is a nonzero function, one of the three functions $\frac{\partial F}{\partial \xi_{1}},\frac{\partial F}{\partial \xi_{2}},\frac{\partial F}{\partial \xi_{3}}$ is not identically zero. We call this function $F^{(1)}$. Note that the function $F^{(1)}$ is a polynomial of degree two. Define
$$U^{(1)}=
\bigcup_{{Q \in \mathcal{P}_{K_{1}^{-1}} \,:\, \exists \xi  \in 5Q-b_{0} \; \mathrm{s.t} \; F^{(1)}(\xi)=0 }}Q.$$
Since $F^{(1)}$ is a nonzero function, one of the three functions $\frac{\partial F^{(1)}}{\partial \xi_{1}},\frac{\partial F^{(1)}}{\partial \xi_{2}},\frac{\partial F^{(1)}}{\partial \xi_{3}}$ is not identically zero. We call this function $F^{(2)}$, and define a set $U^{(2)}$ by
$$U^{(2)}=
\bigcup_{{Q \in \mathcal{P}_{K_{2}^{-1}} \,:\, \exists \xi \in 5Q-b_{0} \; \mathrm{s.t} \;  F^{(2)}(\xi)=0 }}Q.$$
Observe that $U^{(2)}$ is contained in the $30 K_{2}^{-\frac{1}{2}}$-neighborhood of some hyperplane in $\mathbb{R}^3$ because the function $F^{(2)}$ is a nonzero polynomial of degree one. We will deal with the decoupling associated with the set $U^{(2)}$ by using Lemma 4.2. 


Now we start the proof of Proposition 4.1.
By the triangle inequality,
\begin{displaymath}
\begin{split}
\|\sum_{\substack{\alpha \in C_{K^{1/2}}(Z)  }}	E_{\alpha}f\|_{L^p({B_{K}})}^p & \lesssim \|\sum\limits_{\substack{\alpha \in C_{K^{1/2}}(Z), \\ \alpha \subset U^{(2)}}}E_{\alpha}f\|_{L^p({B_{K}})}^p
+
\|\sum\limits_{\substack{\alpha \in C_{K^{1/2}}(Z), \\ \alpha \cap U^{(2)}= \phi }}E_{\alpha}f\|_{L^p({B_{K}})}^p
\\ &
\lesssim 
\sum_{{B_{K_{2}} \subset B_{K }}} \|\sum\limits_{\substack{\alpha \in C_{K^{1/2}}(Z), \\ \alpha \subset U^{(2)}}}E_{\alpha}f\|_{L^p({B_{K_{2}}})}^p
+
\|\sum\limits_{\substack{\alpha \in C_{K^{1/2}}(Z), \\ \alpha \cap U^{(2)}= \phi }}E_{\alpha}f\|_{L^p({B_{K}})}^p \nonumber
.
\end{split}
\end{displaymath}
By applying Lemma 4.2 to the first term on the right hand side and using the triangle inequality, the bound terms are bounded by
\begin{gather*}
	\lesssim
	K_{2}^{C\epsilon+\frac{3}{2}(\frac{1}{2}-\frac{1}{p})p}
	\sum_{B_{K_{2}} \subset B_{K}}
	\sum_{\beta \in \mathcal{P}_{K_{2}^{-1}}}
	\|\sum\limits_{\substack{\alpha \in C_{K^{1/2}}(Z), \\ \alpha \subset U^{(2)}}}E_{\alpha \cap \beta }f\|_{L^p({w_{B_{K_{2}}}})}^p
	+
	\|\sum\limits_{\substack{\alpha \in C_{K^{1/2}}(Z), \\ \alpha \cap U^{(2)}= \phi }}E_{\alpha}f\|_{L^p({B_{K}})}^p 
	\\
	\lesssim
	K_{2}^{C\epsilon+\frac{3}{2}(\frac{1}{2}-\frac{1}{p})p}
	\sum_{\beta \in \mathcal{P}_{K_{2}^{-1}}}
	\|\sum\limits_{\substack{\alpha \in C_{K^{1/2}}(Z), \\ \alpha \subset U^{(2)}}}E_{\alpha \cap \beta }f\|_{L^p({w_{B_{K}}})}^p
	+
	\|\sum\limits_{\substack{\alpha \in C_{K^{1/2}}(Z), \\ \alpha \cap U^{(2)}= \phi }}E_{\alpha}f\|_{L^p({B_{K}})}^p 
	\\
	\lesssim
	K_{2}^{C\epsilon+\frac{3}{2}(\frac{1}{2}-\frac{1}{p})p}
	\sum_{\beta \in \mathcal{P}_{K_{2}^{-1}}}
	\|E_{\beta }f\|_{L^p({w_{B_{K}}})}^p 
	+
	K_{2}^{C\epsilon+\frac{3}{2}(\frac{1}{2}-\frac{1}{p})p}
	\sum_{\beta \in \mathcal{P}_{K_{2}^{-1}}}
	\|\sum\limits_{\substack{\alpha \in \mathcal{P}_{K^{-1}}, \\ \alpha \cap U^{(2)}= \phi }}E_{\alpha \cap \beta }f\|_{L^p({w_{B_{K}}})}^p
	\\
	+
	\|\sum\limits_{\substack{\alpha \in C_{K^{1/2}}(Z), \\ \alpha \cap U^{(2)}= \phi }}E_{\alpha}f\|_{L^p({B_{K}})}^p.\nonumber
\end{gather*}
By the triangle inequality, the above terms are bounded by

\begin{gather*}
\lesssim K_{2}^{C\epsilon+\frac{3}{2}(\frac{1}{2}-\frac{1}{p})p}
\sum_{\beta \in \mathcal{P}_{K_{2}^{-1}}}
\|E_{\beta }f\|_{L^p({w_{B_{K}}})}^p 
\\
+
K_{2}^{C\epsilon+\frac{3}{2}(\frac{1}{2}-\frac{1}{p})p}
\sum_{\beta \in \mathcal{P}_{K_{2}^{-1}}}
\|\sum\limits_{\substack{\alpha \in C_{K^{1/2}}(Z), \\ \alpha \cap (U^{(1)} \cup U^{(2)})= \phi }}E_{\alpha \cap \beta }f\|_{L^p({w_{B_{K}}})}^p
\\
+
K_{2}^{C\epsilon+\frac{3}{2}(\frac{1}{2}-\frac{1}{p})p}
\sum_{\beta \in \mathcal{P}_{K_{2}^{-1}}}
\|\sum\limits_{\substack{\alpha \in C_{K^{1/2}}(Z), \\ \alpha \subset (U^{(1)}\setminus U^{(2)})= \phi }}E_{\alpha \cap \beta }f\|_{L^p({w_{B_{K}}})}^p
\\
+\|\sum\limits_{\substack{\alpha \in C_{K^{1/2}}(Z), \\ \alpha \cap (U^{(1)} \cup U^{(2)})= \phi }}E_{\alpha}f\|_{L^p({B_{K}})}^p 
\\
+ 
\|\sum\limits_{\substack{\alpha \in C_{K^{1/2}}(Z),\\  \alpha \subset (U^{(1)} \setminus U^{(2)}) }}E_{\alpha}f\|_{L^p({B_{K}})}^p.
\end{gather*}
We take $K$ and $K_{1}$ large enough so that $K^{\epsilon},K_{1}^{\epsilon} \geq K_{2}^{\frac{3}{2}+\frac{3}{2}(\frac{1}{2}-\frac{1}{p})p}$.
It suffices now to show the following two inequalities.

\subsubsection*{Claim} If $K$ and $K_{1}$ are sufficiently large, then
		\begin{gather*}
		\|\sum\limits_{\substack{\alpha \in C_{K^{1/2}}(Z), \\ \alpha \subset (U^{(1)} \setminus U^{(2)}) }}E_{\alpha }f\|_{L^p({B_{K}})}^p 
		\leq  K_{1}^{C\epsilon+\frac{3}{2}(\frac{1}{2}-\frac{1}{p})p}\sum\limits_{\gamma \in \mathcal{P}_{K_{1}^{-1}}}\|\sum\limits_{\substack{\alpha \in C_{K^{1/2}}(Z), \\ \alpha \subset (U^{(1)} \setminus U^{(2)})}}E_{\alpha \cap \gamma}f\|_{L^p({w}_{B_{K}})}^p,
		\\ 
		\|\sum\limits_{\substack{\alpha \in C_{K^{1/2}}(Z), \\ \alpha \cap (U^{(1)} \cup U^{(2)})= \phi }}E_{\alpha}f\|_{L^p({B_{K}})}^p \leq K^{C\epsilon+\frac{3}{2}(\frac{1}{2}-\frac{1}{p})p} \sum_{\theta \in C_{K^{1/2}}(Z)}\|E_{\theta}f \|_{L^p(w_{B_{K}})}^p.
		\end{gather*}

\subsubsection*{Proof of Claim}
	First, we prove the first inequality. By the construction of the sets $U^{(i)}$ and the implicit function theorem, the set $U^{(1)} \setminus U^{(2)}$ is contained in a finite union of the $O(K_{1}^{-\frac{1}{2}})$-neighborhood of a manifold whose principal curvatures depend on the constant $K_{2}$. By H\"{o}lder's inequality, we can pretend that the set $U^{(1)} \setminus U^{(2)}$ is contained in the $O(K_{1}^{-\frac{1}{2}})$-neighborhood of a manifold $M$. We apply Lemma 4.4. Then
	$$
	\|\sum\limits_{\substack{\alpha \in C_{K^{1/2}}(Z), \\ \alpha \subset (U^{(1)} \setminus U^{(2)}) }}E_{\alpha }f\|_{L^p({B_{K}})}^p 
	\leq c(K_{2}) K_{1}^{C\epsilon+\frac{3}{2}(\frac{1}{2}-\frac{1}{p})p}\sum\limits_{\gamma \in \mathcal{P}_{K_{1}^{-1}}}\|\sum\limits_{\substack{\alpha \in C_{K^{1/2}}(Z), \\ \alpha \subset (U^{(1)} \setminus U^{(2)})}}E_{\alpha \cap \gamma}f\|_{L^p({w}_{B_{K}})}^p.
	$$
	We fix $K_{1}$ satisfying $c(K_{2}) \leq K_{1}^{\epsilon}$. Lemma 3.1 now completes the proof.
	
	We now prove the second inequality. Because $C_{K^{1/2}}(Z) \setminus (U^{(1)} \cup U^{(2)})$ is a manifold, we can follow the same argument. By Lemma 4.4, we have
	$$
	\|\sum\limits_{\substack{\alpha \in C_{K^{1/2}}(Z), \\ \alpha \cap (U^{(1)} \cup U^{(2)})=\phi }}E_{\alpha }f\|_{L^p({B_{K}})}^p 
	\leq c(K_{1},K_{2}) K^{C\epsilon+\frac{3}{2}(\frac{1}{2}-\frac{1}{p})p}\sum\limits_{\gamma \in \mathcal{P}_{K^{-1}}}\|\sum\limits_{\substack{\alpha \in C_{K^{1/2}}(Z), \\ \alpha \subset (U^{(1)} \setminus U^{(2)})}}E_{\alpha \cap \gamma}f\|_{L^p({w}_{B_{K}})}^p.
	$$
	We fix $K$ satisfying $c(K_{1},K_{2}) \leq K^{\epsilon}$. Lemma 3.1 now completes the proof.
	\qed


\section{Linear versus bilinear decoupling for surfaces}
\label{sec:5}

The goal of this section is to prove Theorem 5.1, which means that the bilinear $l^p$ decoupling implies the linear $l^p$ decoupling. Since the bilinear $l^p$ decoupling is much easier than the linear $l^p$ decoupling, the theorem is useful. Since our surface is a submanifold of half the ambient dimension, the multilinear $l^p$ decoupling is the same as the bilinear decoupling.

Let $S$ be a three-dimensional nondegenerate surface in $\mathbb{R}^{6}$. Let $Z=\{ \xi \in \R^3:J(\xi,0)=0	\}$.


\begin{thm}
	\leavevmode
	Let $p \geq 2$ and $\epsilon>0$.
	\begin{enumerate}
		\item If there is some two-dimensional plane $L$ in $\R^3$ satisfying $\Phi(L) =0$, then there exists $\nu$ such that for every $N \geq 1$
		$$
		D_{S}(N,p) \leq C_{p,\epsilon}N^{\epsilon}\sup_{1<M<N} \bigg[(\frac{N}{M})^{1-\frac{2}{p}}D_{bil}(M,p,\nu) \biggr].
		$$
		\item If such $L$ does not exist, then there exists $\nu$ such that for every $N \geq 1$ 
			$$D_{S}(N,p) \leq C_{p,\epsilon}N^{\epsilon}\sup_{1<M<N}
			\biggl[
			(\frac{N}{M})^{\frac{3}{2}(\frac{1}{2}-\frac{1}{p})}D_{bil}(M,p,\nu) \biggr].$$
	\end{enumerate}	
\end{thm}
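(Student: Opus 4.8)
The plan is to run the standard Bourgain--Demeter broad-narrow iteration, using the bilinear decoupling for the ``broad'' contribution and the decoupling clustered near a variety (Proposition 4.1), combined with parabolic rescaling, for the ``narrow'' one. Fix $p\ge 2$ and $\epsilon>0$; since small $N$ is handled by H\"older's inequality we may take $N$ large. I would choose a large constant $K$ — ultimately the same $K$ of Proposition 4.1 together with its companions $1\ll K_2\ll K_1\ll K$ — and a small transversality parameter $\nu=\nu(K)$. Partition $B_N$ into balls $B_K$, and on each $B_K$ run the broad-narrow dichotomy over the caps $\tau\in\mathcal{P}_{K^{-1}}$: either there are $\nu$-transverse caps $\tau_1\ni p_1$, $\tau_2\ni p_2$ with $|E_{[0,1]^3}f(x)|\lesssim K^{O(1)}|E_{\tau_1}f(x)E_{\tau_2}f(x)|^{1/2}$ for the dominant $x\in B_K$, or else all caps on which $|E_\tau f|$ is comparable to the maximum are pairwise non-transverse. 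Using $J(p_1,p_2)=J(p_1-p_2,0)$ and $Z=\{\xi: J(\xi,0)=0\}$, non-transversality means $p_1-p_2$ lies in a $\nu$-sublevel set of $J(\cdot,0)$; choosing $\nu\sim K^{-1/2}$ this set is comparable to the $K^{-1/2}$-neighbourhood of $Z$, so in the narrow alternative every dominant cap lies in $C_{K^{1/2}}(Z+b)$ for some $b=b(B_K)\in[0,1]^3$. Note $Z$ is the zero set of $\det D\Phi$, a nonzero cubic by nondegeneracy.

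For the broad contribution I would raise the pointwise bound to the $p$-th power, sum over the $O(K^3)$ pairs $(\tau_1,\tau_2)$, integrate against $w_{B_N}$, and apply the definition of $D_{bil}(N,p,\nu)$ to each $\||E_{\tau_1}fE_{\tau_2}f|^{1/2}\|_{L^p(w_{B_N})}$; bounding $(\sum_{\theta\subset\tau_1}\|E_\theta f\|_p^p)^{1/2}(\sum_{\theta\subset\tau_2}\|E_\theta f\|_p^p)^{1/2}\le\sum_{\theta\in\mathcal{P}_{N^{-1}}}\|E_\theta f\|_p^p$ then gives a contribution $\lesssim K^{O(1)}D_{bil}(N,p,\nu)(\sum_\theta\|E_\theta f\|_{L^p(w_{B_N})}^p)^{1/p}$, and $K^{O(1)}\le N^{\epsilon}$ since $K$ is a constant.

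For the narrow contribution, after localizing I must bound $(\sum_{B_K\subset B_N}\|\sum_{\alpha\in C_{K^{1/2}}(Z+b)}E_\alpha f\|_{L^p(w_{B_K})}^p)^{1/p}$. In case (2) no plane with $\Phi(L)=0$ exists, so Proposition 4.1 applies to each translate $Z+b$ with a constant independent of $b$ (its translation stability); applying it on each $B_K$ produces three pieces, at scales $K$, $K_1$, $K_2$. The scale-$K$ piece is a genuine decoupling into $\mathcal{P}_{K^{-1}}$, and parabolic rescaling (Proposition 3.2) each cap down to $N^{-1/2}$ bounds it by $K^{C\epsilon+\frac{3}{2}(\frac{1}{2}-\frac{1}{p})}D_S(N/K,p)(\sum_\theta\|E_\theta f\|^p)^{1/p}$. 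The scale-$K_1$ and scale-$K_2$ pieces still carry the variety structure, but only at the bounded relative scales $K/K_1$, $K/K_2$; parabolic rescaling each $\gamma\in\mathcal{P}_{K_1^{-1}}$ (resp. $\mathcal{P}_{K_2^{-1}}$) turns the inner sum into a decoupling clustered near an affine image of $Z$ at a scale that is bounded in terms of $\epsilon$, hence costs only an $\epsilon$-dependent constant, and a further rescaling sends each cap to scale $N^{-1/2}$; the hierarchy $1\ll K_2\ll K_1\ll K$ together with the $K^{C\epsilon}$, $K_1^{C\epsilon}$ room in Proposition 4.1 is exactly what lets these constants be absorbed. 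So in case (2) the narrow contribution is $\lesssim_{\epsilon} K^{C\epsilon+\frac{3}{2}(\frac{1}{2}-\frac{1}{p})}D_S(N/K,p)(\sum_\theta\|E_\theta f\|^p)^{1/p}$. In case (1) Proposition 4.1 is unavailable, so instead I would decouple the $\sim K$ caps of $C_{K^{1/2}}(Z+b)$ by the trivial $l^p$ decoupling (Lemma 3.4), with cost $K^{1-\frac{2}{p}}$, then parabolic rescale, giving a narrow contribution $\lesssim K^{1-\frac{2}{p}}D_S(N/K,p)(\sum_\theta\|E_\theta f\|^p)^{1/p}$.

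Combining the two alternatives and summing over $B_K\subset B_N$ via Lemma 3.1, with $\beta=1-\frac{2}{p}$ in case (1) and $\beta=\frac{3}{2}(\frac{1}{2}-\frac{1}{p})$ in case (2), yields
$$
D_S(N,p)\le C_{p,\epsilon}\,[\,K^{O(1)}D_{bil}(N,p,\nu)+K^{\beta+C\epsilon}D_S(N/K,p)\,].
$$
Iterating this $j\sim\log_K N$ times, down to scale $O(1)$ where $D_S=O(1)$, and writing $M=N/K^i$ (so $K^i=N/M$ and the accumulated constant $C^i\le (N/M)^{\epsilon}$ once $K$ is large relative to $C$), the $i$-th term is $\lesssim N^{C'\epsilon}(N/M)^{\beta}D_{bil}(M,p,\nu)$ and the terminal term is $\lesssim N^{C'\epsilon}N^{\beta}$; summing the $O(\log N)$ terms and relabelling $\epsilon$ gives $D_S(N,p)\le C_{p,\epsilon}N^{\epsilon}\sup_{1<M<N}(N/M)^{\beta}D_{bil}(M,p,\nu)$. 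The hard part will be the narrow case in (2): pinning down the transversality parameter $\nu$ so that the dominant caps genuinely lie in $C_{K^{1/2}}(Z+b)$, and then verifying that the scale-$K_1$ and scale-$K_2$ outputs of Proposition 4.1 — the terms that have no analogue for the paraboloid or for two-dimensional surfaces — are reabsorbed with only an $\epsilon$-loss, which is precisely where the hierarchy $1\ll K_2\ll K_1\ll K$ must be chosen, as announced after Proposition 4.1.
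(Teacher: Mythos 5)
Your outline matches the paper's strategy — a Bourgain--Guth broad-narrow dichotomy on balls $B_K$, the bilinear constant for the transverse alternative, Proposition~4.1 plus parabolic rescaling for the narrow alternative, and an iteration down to scale $1$. The transverse case and the entire case~(1) argument are correct as described. The gap is in your treatment of the scale-$K_1$ and scale-$K_2$ outputs of Proposition~4.1.

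You propose to eliminate the variety structure in those two terms by rescaling each $\gamma\in\mathcal{P}_{K_1^{-1}}$ (resp.\ $\mathcal{P}_{K_2^{-1}}$) and then decoupling the inner sum trivially, arguing that this ``costs only an $\epsilon$-dependent constant'' which is ``absorbed'' by the $K^{C\epsilon}$, $K_1^{C\epsilon}$ room. This is exactly where the exponents go wrong. Inside a cap $\gamma$ of side $K_1^{-1/2}$, the variety $Z$ (generically a $2$-dimensional surface, the zero set of the cubic $\det D\Phi$) is hit by $\sim K/K_1$ cubes of side $K^{-1/2}$, so trivially decoupling the inner sum costs $(K/K_1)^{1-\frac{2}{p}}$. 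The resulting total factor attached to the scale-$K_1$ piece is
$K_1^{\frac{3}{2}(\frac{1}{2}-\frac{1}{p})}\bigl(K/K_1\bigr)^{1-\frac{2}{p}}$,
which you must compare with the scale-$K$ piece's $K^{\frac{3}{2}(\frac{1}{2}-\frac{1}{p})}$. Since $\bigl(1-\frac{2}{p}\bigr)-\frac{3}{2}\bigl(\frac{1}{2}-\frac{1}{p}\bigr)=\frac{1}{4}-\frac{1}{2p}>0$ for $p>2$, the excess $(K/K_1)^{\frac{1}{4}-\frac{1}{2p}}$ is \emph{not} $O(K^{\epsilon})$ in the regime $K_1\ll K$ that Proposition~4.1 requires; it is a fixed positive power of $K$ once $K_1$ is pinned down. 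Over the $\sim\log_K N$ steps of your single-scale recursion this compounds to a loss $N^{c}$ for some $c>0$, not $N^{\epsilon}$. The hierarchy $1\ll K_2\ll K_1\ll K$ is chosen in the paper to make the constants $c(K_2)$, $c(K_1,K_2)$ (curvature of the approximating manifolds) small compared to $K_1^{\epsilon}$, $K^{\epsilon}$ — it does not, and cannot, make $K/K_1$ small.

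What the paper actually does at this point is the opposite of collapsing scales: since you are in the narrow scenario, every cap $\alpha$ outside the neighbourhood of $Z+b_{\alpha^{(1)}}$ has $c_{\alpha}\leq K^{-3}c_{\alpha^{(1)}}$, so for $\gamma\in\mathcal{P}_{K_j^{-1}}$ one can replace $\|E_{\gamma}g\|$ by $\|E_{\gamma}f\|$ at the cost of one extra term $\lesssim K^{O(1)}\|E_{\alpha^{(1)}}f\|^p$. This turns the scale-$K_1$ and scale-$K_2$ outputs of Proposition~4.1 into genuine coarse decouplings into $\mathcal{P}_{K_1^{-1}}$ and $\mathcal{P}_{K_2^{-1}}$, giving a five-term inequality (Proposition~5.2(2)) with jumps at the four scales $K_2$, $K_1$, $K$, $K^2$, all carrying the correct exponent $\frac{3}{2}(\frac{1}{2}-\frac{1}{p})$. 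After rescaling (Proposition~5.3) one iterates a genuinely multi-scale recursion: along each branch of the iteration tree the jump factors multiply to $(N/M)^{\frac{3}{2}(\frac{1}{2}-\frac{1}{p})+C\epsilon}$, while the accumulated multiplicative constant is $C_{p,\epsilon}^{m}$ with $m\lesssim\log N/\log K_2$, controlled by $N^{\epsilon}$ by taking $K_2$ large. So the fix is not to decouple the $K_1$ and $K_2$ terms further at the top scale, but to keep them and iterate all scales simultaneously.
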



The proof of (1) in Theorem 5.1 does not require any novelty. We first note that if there exists a hyperplane $L$ in $\mathbb{R}^{3}$ satisfying $\Phi(L)=0$, then we can assume that $$S=\{(\xi_{1},\xi_{2},\xi_{3},\xi_{1}^2,2\xi_{1}\xi_{2},2\xi_{1}\xi_{3}): (\xi_{1},\xi_{2},\xi_{3}) \in [0,1]^3 \}$$  by using a change of variables. Thus, we have 
$ J(p_{1},p_{2})=8|\xi_{1}-\eta_{1}|^3
$
for any $p_{1}=(\xi_{1},\xi_{2},\xi_{3})$ and $p_{2}=(\eta_{1},\eta_{2},\eta_{3})$. Since the set $Z$ is a hyperplane, we can easily prove (1) of Theorem 5.1 just by following Bourgain and Demeter's argument in \cite{BD-The-2015}.

Note that the exponent of $(\frac{N}{M})$ in (2) of Theorem 5.1 is identical to the exponents of $K,K_{1},K_{2}$ in Proposition 4.1. This is because this term comes from a decoupling clustered near a variety.

To prove Theorem 5.1, we will first prove Proposition 5.2.


\begin{propo}
	\leavevmode
	Let $p\geq2$ and $\epsilon>0$.
	\begin{enumerate}
		\item If there is some two-dimensional plane  $L$ in $\R^3$ satisfying $\Phi(L)=0$, then there exist sufficiently large number $K$ and some number $C_{K}$ such that for any $f:[0,1]^3 \rightarrow \C$ and $N \geq K^2$
		\begin{displaymath}
		\begin{split}
		\|E_{[0,1]^3}f\|_{L^p({B_{N}})}^p & \leq C_{p}K^{(1-\frac{2}{p})p} \sum_{\alpha \in \mathcal{P}_{K^{-1}} }\|E_{\theta}f\|_{L^p({w}_{B_{N}})}^p
		\\ &
		+C_{p}K^{100p}D_{bil}(N,p,C_{K})^p
		\sum_{\theta \in \mathcal{P}_{N^{-1}}}\|E_{\theta}f\|_{L^p({w}_{B_{N}})}^p.
		\end{split}
		\end{displaymath}
		\item If such $L$ does not exist, then
		there exist sufficiently large numbers $K,K_{1}$ and $K_{2}$ with $1 \ll  K_{2} \ll K_{1} \ll K$ and some number $C_{K}$ such that for any $f: [0,1]^3 \rightarrow \mathbb{C}$ and $N \geq K^2$
		\begin{displaymath}
		\begin{split}
		\|E_{[0,1]^3}f\|_{L^p({B_{N}})}^p & \leq 
		C_{p,\epsilon}
		K_{2}^{C\epsilon+ \frac{3}{2}(\frac{1}{2}-\frac{1}{p})p}\sum_{\beta \in \mathcal{P}_{K_{2}^{-1}} }\|
		E_{\beta}f\|_{L^p({w}_{B_{N}})}^p
		\\&
		+C_{p,\epsilon} K_{1}^{C\epsilon+\frac{3}{2}(\frac{1}{2}-\frac{1}{p})p} \sum_{\beta \in \mathcal{P}_{K_{1}^{-1}}} \|E_{\beta}f\|_{L^p(w_{B_{N}})}^p
		\\&
		+C_{p,\epsilon}K^{C\epsilon+\frac{3}{2}(\frac{1}{2}-\frac{1}{p})p}
		\sum_{\beta \in \mathcal{P}_{K^{-1}}}\|E_{ \beta}f\|_{L^p({w}_{B_{N}})}^p
		\\&
		+C_{p,\epsilon}K^{C\epsilon+{3}(\frac{1}{2}-\frac{1}{p})p}
		\sum_{\alpha \in \mathcal{P}_{K^{-2}}}\|E_{ \alpha}f\|_{L^p({w}_{B_{N}})}^p
		\\ &
		+C_{p,\epsilon}K^{100p}D_{bil}(N,p,C_{K})^p
		\sum_{\theta \in \mathcal{P}_{N^{-1}}}\|E_{\theta}f\|_{L^p({w}_{B_{N}})}^p.
		\end{split}
		\end{displaymath}
	\end{enumerate}
\end{propo}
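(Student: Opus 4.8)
The plan is to run the standard Bourgain--Demeter broad/narrow dichotomy at a fixed small scale $K^{-1}$ (and for part (2) also at the scales $K_1^{-1}$ and $K_2^{-1}$), using the bilinear theory to absorb the ``broad'' contribution and the decoupling clustered near a variety (Proposition 4.1) to absorb the ``narrow'' contribution. First I would fix a ball $B=B_K$, partition $[0,1]^3$ into caps $\alpha\in\mathcal P_{K^{-1}}$, and write, for each ball $B_K\subset B_N$, a pointwise inequality of the form
\begin{displaymath}
\|E_{[0,1]^3}f\|_{L^p(B_K)}^p \lesssim K^{Cp}\max_{\alpha}\|E_\alpha f\|_{L^p(w_{B_K})}^p + K^{Cp}\max_{\alpha_1,\alpha_2\ \nu\text{-transverse}} \||E_{\alpha_1}f\,E_{\alpha_2}f|^{1/2}\|_{L^p(w_{B_K})}^p,
\end{displaymath}
which is the usual consequence of the trivial bound $|E_{[0,1]^3}f|\le\sum_\alpha|E_\alpha f|$ together with the combinatorial fact that either one cap dominates (up to $K^{O(1)}$) or two $\nu$-transverse caps both contribute a fixed fraction. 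Here $\nu$ is chosen depending only on $K$ (call it $C_K$), via the geometry of the Jacobian $J(\xi,\eta)$; the non-transverse pairs are exactly those with $(\alpha_1-\alpha_2)$ meeting a $\nu$-neighborhood of the variety $Z=\{J(\xi,0)=0\}$.

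The narrow term is the interesting one. Summing the caps that are \emph{not} $\nu$-transverse to a fixed cap amounts to summing caps $\alpha$ whose difference set clusters near $Z$; after translating $Z$ by the center of the fixed cap (this is where ``stability under slight translations of $Z$'' in Proposition 4.1 is used), this is precisely the quantity controlled by Proposition 4.1. So I would apply Proposition 4.1 at scale $K$ to the narrow part, producing the three terms with exponents $K_2^{C\epsilon+\frac32(\frac12-\frac1p)p}$, $K_1^{C\epsilon+\frac32(\frac12-\frac1p)p}$, $K^{C\epsilon+\frac32(\frac12-\frac1p)p}$ on the scales $\mathcal P_{K_2^{-1}}$, $\mathcal P_{K_1^{-1}}$, $\mathcal P_{K^{-1}}$ respectively; the innermost sum over $\gamma\cap\alpha$ for $\alpha\in C_{K^{1/2}}(Z)$, combined with the trivial $l^p$ decoupling (Lemma 3.4) of each $\gamma$ into the $\le K^{O(1)}$ caps of $\mathcal P_{K^{-2}}$ inside it, yields the fourth term with exponent $K^{C\epsilon+3(\frac12-\frac1p)p}$ over $\mathcal P_{K^{-2}}$. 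For part (1), since $Z$ is a hyperplane one instead uses Lemma 4.2 (a single clean lower-dimensional decoupling) in place of Proposition 4.1, and the narrow term collapses to the single $K^{(1-2/p)p}$ term over $\mathcal P_{K^{-1}}$. The broad term is handled by definition of $D_{bil}$: after passing from $\|\cdot\|_{L^p(w_{B_K})}$ on a single $K$-ball to $\|\cdot\|_{L^p(w_{B_N})}$ by summing over a finitely overlapping cover of $B_N$ by $K$-balls (legitimate since $p\ge2$ so the $\ell^p$ sum of local norms dominates), the bilinear piece becomes $K^{100p}D_{bil}(N,p,C_K)^p\sum_{\theta\in\mathcal P_{N^{-1}}}\|E_\theta f\|_{L^p(w_{B_N})}^p$, which is the last term. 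Throughout, Lemma 3.1 is invoked to pass between $L^p(B_K)$ and $L^p(w_{B_K})$ whenever needed.

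The step I expect to be the main obstacle is \emph{correctly identifying the narrow set with a neighborhood of the variety $Z$ and feeding it to Proposition 4.1 with uniform constants}. Two subtleties arise: first, the transversality threshold $\nu=C_K$ depends on $K$, so the ``$Z$'' entering Proposition 4.1 is really a $C_K$-neighborhood of the exact zero set $J(\cdot,0)=0$, and one must check that Proposition 4.1 applies to this thickened set (this is why the $5Q$-dilates and the translation-stability of $C_{p,\epsilon}$ are built into Proposition 4.1); second, when replacing a $\nu$-neighborhood of $Z$ by a genuine degree-three variety one should verify that a $\nu$-neighborhood of a quadric-defined set is contained in $\le O_\nu(1)$ translates of $C_{K^{1/2}}(Z')$ for a polynomial $Z'$ of degree $\le 3$, so that the geometry of Section 4 is literally applicable. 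Once this bookkeeping is in place, the rest is the routine Bourgain--Demeter broad/narrow split plus the scale-juggling already packaged in Lemmas 3.1, 3.4 and Proposition 4.1.
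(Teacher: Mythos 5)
Your overall plan matches the paper's: run a Bourgain--Guth broad/narrow dichotomy on each $K$-ball, absorb the narrow contribution by Proposition 4.1 (Lemma 4.2 in case (1)), absorb the broad contribution by the definition of $D_{bil}$, and then sum over a cover of $B_N$ by $K$-balls. The translation-stability of $C_{p,\epsilon}$ in Proposition 4.1 is indeed exactly the point where the variety $Z+b_{\alpha^{(1)}}$ (depending on the dominant cap) is fed in. That much is faithful to the paper.

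However, there are two concrete gaps. First, the pointwise dichotomy you write down,
$\|E_{[0,1]^3}f\|_{L^p(B_K)}^p \lesssim K^{Cp}\max_\alpha\|E_\alpha f\|_{L^p(w_{B_K})}^p + K^{Cp}\max\||E_{\alpha_1}f E_{\alpha_2}f|^{1/2}\|_{L^p(w_{B_K})}^p$,
is not correct as stated: the narrow alternative is not ``one cap dominates,'' it is ``every cap carrying a non-negligible share lies in the $K^{-1/2}$-neighborhood of $Z+b_{\alpha^{(1)}}$.'' There can be $\gg K^{O(1)}$ comparable caps concentrated near a degree-three variety with no transverse pair among them, so the $\max_\alpha$ bound fails. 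The paper handles this with the Bourgain--Guth constants $c_\alpha(B_K(a))$ (a mollified local $L^{1/2}$ average per cap at the finer scale $\mathcal P_{K^{-2}}$), which give a genuine pointwise bound $|E_{[0,1]^3}f(x)| \lesssim |\sum_{\beta\ \mathrm{near}\ Z+b_{\alpha^{(1)}}} E_\beta f(x)| + c_{\alpha^{(1)}}(B_K(a))$ in the non-transverse case. You later describe the narrow set correctly, but the pointwise inequality itself needs to be replaced by this more quantitative version.

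Second, your derivation of the $\mathcal P_{K^{-2}}$ term with exponent $K^{C\epsilon+3(\frac12-\frac1p)p}$ is not the paper's, and as stated it does not produce the claimed exponent. Trivially decoupling a $\gamma\in\mathcal P_{K_1^{-1}}$ or $\mathcal P_{K_2^{-1}}$ block into its $\mathcal P_{K^{-2}}$ subcaps costs a factor $(K/K_j^{1/2})^{3(p-2)}$, which combined with the prefactor $K_j^{C\epsilon+\frac32(\frac12-\frac1p)p}$ overshoots $K^{3(\frac12-\frac1p)p}$; it would also destroy the $K_1$- and $K_2$-scale terms that are needed for the bootstrap in Theorem 5.1. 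In the paper the $\mathcal P_{K^{-2}}$ term arises instead from the \emph{residual of the dominant cap}: (i) the extra $|c_{\alpha^{(1)}}(B_K(a))|^p|B_K|$ in the narrow pointwise bound, and (ii) the error incurred when converting $\|E_\gamma g\|$ (with $g$ the restriction of $f$ to the narrow set) into $\|E_\gamma f\|$ on the right side of $(5.1)$, each of which is bounded by $\|E_{\alpha^{(1)}}f\|_{L^p(w_{B_K(a)})}^p$ and then summed over $\mathcal P_{K^{-2}}$. Your second ``subtlety'' (degree-$\le 3$ covers of a $\nu$-neighborhood) is actually a non-issue: $C_{K^{1/2}}(Z)$ is by definition already the union of $K^{-1/2}$-cubes meeting the exact zero set $Z$ of the degree-three polynomial $J(\cdot,0)$, so no replacement variety is needed.
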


The constants $K,K_{1}$ and $K_{2}$ will be determined in the proof of Theorem 5.1. The proof of Proposition 5.2 is very similar to that of Proposition 5.2 in \cite{BD-The-2015}.

\subsubsection*{Proof of Proposition 5.2.}
We first prove $(2)$ of Proposition 5.2.
Due to translation invariance, we can assume that $B_{N}=[0,N]^{6}$. We will follow the standard formalism in \cite{BG-Bounds-2011}. Fix a cube $B_{K}(a)$ in $B_{N}$. We take a Schwartz function $\eta$ on $\mathbb{R}^{6}$, with $\hat{\eta}(x)=1$ on $[-2,2]^{6}$ and $\hat{\eta}(x)=0$ outside $[-4,4]^{6}$. We also take the function $\zeta_{B_{K}(a)}(x)=K^{-6}w_{B_{K}(a)}^{100}(x)$ so that $\|\zeta_{B_{K}(a)}	\|_{L^1(\mathbb{R}^{6})} \sim	1$. If $a=0$, we sometimes write $\zeta_{{K}}$ instead of $\zeta_{B_{K}(0)}$. For each cube $\alpha=b_{\alpha}+[0,K^{-1}]^{3} \in \mathcal{P}_{K^{-2}}$ with some $b_{\alpha}\in [0,1]^3$, we define $$\eta_{K,\alpha}(x)=K^{-6}e(x \cdot (b_{\alpha},\Phi(b_{\alpha})))\eta(\frac{x}{K}).$$ 
By an application of Young's inequality
\begin{displaymath}
\begin{split}
\|E_{\alpha}f(x-\cdot)\eta_{K,\alpha}(\cdot)\|_{L^1} &\leq \|E_{\alpha}f(x-\cdot)\eta_{K,\alpha}(\cdot)\|_{L^{\infty}}^{\frac{1}{2}}
\|E_{\alpha}f(x-\cdot)\eta_{K,\alpha}(\cdot)\|_{L^{\frac{1}{2}}}^{\frac{1}{2}}
\\& \lesssim {K^{-{3}}}
\|E_{\alpha}f(x-\cdot)\eta_{K,\alpha}(\cdot)\|_{L^{1}}^{\frac{1}{2}}
\|E_{\alpha}f(x-\cdot)\eta_{K,\alpha}(\cdot)\|_{L^{\frac{1}{2}}}^{\frac{1}{2}}.
\end{split}
\end{displaymath}
Hence, we have
\begin{displaymath}
|E_{\alpha}f(x)|=|E_{\alpha}f*\eta_{K,\alpha}(x)| \lesssim \bigl(\int_{\mathbb{R}^{6}}|E_{\alpha}f(x-y)|^{\frac{1}{2}}\frac{1}{K^{6}}|\eta(\frac{y}{K})|^{\frac{1}{2}}\,dy\bigr)^2.
\end{displaymath}
Define 
$$c_{\alpha}(B_{K}(a))=(\int_{\mathbb{R}^{6}}|E_{\alpha}f(y)|^{\frac{1}{2}}\zeta_{B_{K}(a)}(y)\,dy\bigr)^2.$$
Note that for any $x\in B_{K}(a)$
\begin{displaymath}
\begin{split}
|E_{\alpha}f(x)| & \lesssim 
\bigl(\int_{\mathbb{R}^{6}}|E_{\alpha}f(y)|^{\frac{1}{2}}\zeta_{B_{K}(0)}(x-y)\,dy\bigr)^2  
\lesssim
c_{\alpha}(B_{K}(a)) 
\\&
\lesssim \int |E_{\alpha}f(y)|\zeta_{B_{K}(a)}(y)\,dy
\lesssim
\int_{\mathbb{R}^{6}}|E_{\alpha}f(x-y)|\zeta_{B_{K}(0)}(y)\,dy.
\end{split}
\end{displaymath}

Let $\alpha^{(1)} \in \mathcal{P}_{K^{-2}}$ be a cube maximizing the value $c_{\alpha}(B_{K}(a))$. 
There are two possibilities. 

(Case 1: a transverse case) consider the case that  there is some cube $\alpha^{(2)} \in \mathcal{P}_{K^{-2}} $ such that $\alpha^{(2)} \cap (C_{K^{1/2}}(Z)+b_{\alpha^{(1)}}) = \phi$  and $c_{\alpha^{(2)}}(B_{K}(a)) \geq K^{-3}c_{\alpha^{(1)}}(B_{K}(a))$. Note that $\alpha^{(1)}$ and $\alpha^{(2)}$ are $C_{K}$-transverse. For any $x \in B_{K}(a)$ we have
$$|E_{[0,1]^3}f(x)| = |\sum_{\alpha} E_{\alpha}f(x)| \leq  K^{\frac{9}{2}}(c_{\alpha^{(1)}}(B_{K}(a))c_{\alpha^{(2)}}(B_{K}(a)))^{\frac{1}{2}},$$ 
and we also have
\begin{gather*}  
	|c_{\alpha^{(1)}}(B_{K}(a))c_{\alpha^{(2)}}(B_{K}(a))|^{\frac{1}{2}}
	\lesssim \iint_{\mathbb{R}^{6} \times \mathbb{R}^{6} } \prod_{i=1}^{2} |E_{\alpha^{(i)}}f(x-y_{i})|^{\frac{1}{2}}\zeta_{K}(y_{i})\,dy_{1}dy_{2}.
\end{gather*}
Raising to the $p$ power, integrating on the cube $B_{K}(a)$ and H\"{o}lder's inequality give
\begin{displaymath}
\begin{split}
	\|E_{[0,1]^3}f\|_{L^p(B_{K}(a))}^p & 
	\lesssim 
	K^{6p}\iint_{\mathbb{R}^{6} \times \mathbb{R}^{6}} \|\prod_{i=1}^{2}| E_{\alpha^{(i)}}f(\cdot-y_{i})|^{\frac{1}{2}}\zeta_{K}(y_{i})\|_{L^p(B_{K}(a))}^{p}
	\,dy_{1}dy_{2}
	\\ &
	\lesssim 
	K^{6p}\iint
	\sum_{
		\substack{
			\alpha_{1}, \alpha_{2} \in \mathcal{P}_{K^{-2}} \\(\alpha_{1},\alpha_{2}):C_{K}-\mathrm{trans}
	}}
	\|\prod_{i=1}^{2}|E_{\alpha_{i}}f(\cdot-y_{i})|^{\frac{1}{2}}\zeta_{K}(y_{i})\|_{L^p(B_{K}(a))}^{p}
	\,dy_{1}dy_{2}.
\end{split}
\end{displaymath}

(Case 2: a non-transverse case) Suppose that Case 1 does not occur. If a cube $\alpha \in \mathcal{P}_{K^{-2}}$ satisfies $\alpha \cap (C_{K^{1/2}}(Z)+b_{\alpha^{(1)}}) = \phi$, then $ c_{\alpha}(B_{K}(a)) \leq K^{-3}c_{\alpha^{(1)}}(B_{K}(a)).$
Thus, for any $x\in B_{K}(a)$ we have
$$|E_{[0,1]^3}f(x)| \lesssim |\sum_{\beta \in \mathcal{P}_{K^{-1}}:\beta \cap (C_{K^{1/2}}(Z)+b_{\alpha^{(1)}}) \neq \phi}	E_{\beta}f(x)| +c_{\alpha^{(1)}}(B_{K}(a)).$$
By raising to the power $p$ and integrating on the cube $B_{K}(a)$, we have
\begin{displaymath}
\begin{split}
\|E_{[0,1]^3}f\|_{L^p(B_{K}(a))}^p &\lesssim \|\sum_{\beta \in \mathcal{P}_{K^{-1}}:\beta \cap (C_{K^{1/2}}(Z)+b_{\alpha^{(1)}}) \neq \phi}	E_{\alpha}f(x)	\|_{L^p(B_{K}(a))}^p \\&+ |c_{\alpha^{(1)}}(B_{K}(a))|^{p}|B_{K}(a)|.
\end{split}
\end{displaymath}
The second term can be easily handled; by H\"{o}lder's inequality
$$|c_{\alpha^{(1)}}(B_{K}(a))|^{p}|B_{K}| \lesssim 
\int |E_{\alpha^{(1)}}f(y)|^p
|B_{K}| \zeta_{B_{K}(a)}(y) \,dy
\lesssim \|E_{\alpha^{(1)}}f\|_{L^p({w}_{B_{K}(a)})}^p.$$
To handle the first term, we need the following inequality.
\begin{equation}
\begin{split}
\|E_{[0,1]^3}g	\|_{L^p(B_{K}(a))}^p &  
\leq C_{p,\epsilon}
K_{2}^{C\epsilon+ \frac{3}{2}(\frac{1}{2}-\frac{1}{p})p}\sum_{\gamma \in \mathcal{P}_{K_{2}^{-1}} }\|
E_{\gamma}g\|_{L^p({w}_{B_{K}}(a))}^p
\\&
+C_{p,\epsilon} K_{1}^{C\epsilon+\frac{3}{2}(\frac{1}{2}-\frac{1}{p})p} \sum_{\gamma \in \mathcal{P}_{K_{1}^{-1}} } \|E_{\gamma}g\|_{L^p(w_{B_{K}}(a))}^p
\\&
+C_{p,\epsilon}K^{C\epsilon+\frac{3}{2}(\frac{1}{2}-\frac{1}{p})p}
\sum_{\gamma \in \mathcal{P}_{K^{-1}}}\|E_{ \gamma}g\|_{L^p({w}_{B_{K}}(a))}^p
,
\end{split}
\end{equation}
where $g(\xi)=\sum\limits_{\substack{\beta \in \mathcal{P}_{K^{-1}} : \alpha \cap (C_{K^{1/2}}(Z)+b_{\alpha^{(1)}})) \neq \phi}} 1_{\alpha}(\xi)f(\xi)$. This inequality immediately follows from Proposition 4.1. Since we are dealing with the second scenario,
\begin{displaymath}
\begin{split}
\|E_{[0,1]^3}g	\|_{L^p(B_{K}(a))}^p &  \leq C_{p,\epsilon}
K_{2}^{C\epsilon+ \frac{3}{2}(\frac{1}{2}-\frac{1}{p})p}\sum_{\gamma \in \mathcal{P}_{K_{2}^{-1}} }\|
E_{\gamma}f\|_{L^p({w}_{B_{K}}(a))}^p
\\&
+C_{p,\epsilon} K_{1}^{C\epsilon+\frac{3}{2}(\frac{1}{2}-\frac{1}{p})p} \sum_{\gamma \in \mathcal{P}_{K_{1}^{-1}} } \|E_{\gamma}f\|_{L^p(w_{B_{K}}(a))}^p
\\&
+C_{p,\epsilon}K^{C\epsilon+\frac{3}{2}(\frac{1}{2}-\frac{1}{p})p}
\sum_{\gamma \in \mathcal{P}_{K^{-1}}}\|E_{ \gamma}f\|_{L^p({w}_{B_{K}}(a))}^p
\\&
+C_{p,\epsilon}K^{C\epsilon+{3}(\frac{1}{2}-\frac{1}{p})p} \|E_{\alpha^{(1)}}f\|_{L^p(w_{B_{K}}(a))}^p.
\end{split}
\end{displaymath}

To summarize, in either case, we have

\begin{displaymath}
\begin{split}
	\|E_{[0,1]^3}f\|_{L^p(B_{K}(a))}^{p} &
\leq C_{p,\epsilon}
K_{2}^{C\epsilon+ \frac{3}{2}(\frac{1}{2}-\frac{1}{p})p}\sum_{\gamma \in \mathcal{P}_{K_{2}^{-1}} }\|
E_{\gamma}f\|_{L^p({w}_{B_{K}}(a))}^p
\\&
+C_{p,\epsilon} K_{1}^{C\epsilon+\frac{3}{2}(\frac{1}{2}-\frac{1}{p})p} \sum_{\gamma \in \mathcal{P}_{K_{1}^{-1}} } \|E_{\beta}f\|_{L^p(w_{B_{K}}(a))}^p
\\&
+C_{p,\epsilon}K^{C\epsilon+\frac{3}{2}(\frac{1}{2}-\frac{1}{p})p}
\sum_{\gamma \in \mathcal{P}_{K^{-1}}}\|E_{ \gamma}f\|_{L^p({w}_{B_{K}}(a))}^p
\\&
+C_{p,\epsilon}K^{C\epsilon+{3}(\frac{1}{2}-\frac{1}{p})p}
\sum_{\alpha \in \mathcal{P}_{K^{-2}}}\|E_{ \alpha}f\|_{L^p({w}_{B_{K}}(a))}^p
\\&
+C_{p}K^{6p}\iint  \sum_{
	\substack{
		\alpha_{1}, \alpha_{2} \in \mathcal{P}_{K^{-2}} \\(\alpha_{1},\alpha_{2}):C_{K}-\mathrm{trans}
}} \|\prod_{i=1}^{2}|E_{\alpha_{i}}f(\cdot-y_{i})|^{\frac{1}{2}}\zeta_{K}(y_{i})\|_{L^p(B_{K}(a))}^p \,dy_{1}dy_{2}.
\end{split}
\end{displaymath}
It suffices now to sum over $B_{K} \subset {B_{N}}$ and use the definition of $D_{bil}(N,p,\nu)$ and Fubini's theorem.

The proof of (1) of Proposition 5.2 is identical to that of (2) of Proposition 5.2 except that instead of $(5.1)$ we use the following inequality.
\begin{displaymath}
\begin{split}
\|\sum\limits_{\substack{\beta \in \mathcal{P}_{K^{-1}} \\ \beta \cap (C_{K^{1/2}}(Z)+b_{\alpha^{(1)}}) \neq \phi}}	E_{\beta}f(x)	\|_{L^p(B_{K}(a))}^p \leq C_{p} K^{p-2}
\sum_{\beta \in \mathcal{P}_{K^{-1}}}
\|E_{\beta}f\|_{L^p({w}_{B_{K}(a)})}^p.
\end{split}
\end{displaymath}
This inequality follows from Lemma 3.1 and the fact that the number of the cubes $\beta$ with side length $K^{-\frac{1}{2}}$ intersecting the hyperplane $C_{K^{1/2}}(Z)$ is $O(K)$.
\qed

To iterate Proposition 5.2, we need a rescaled version of it.
The proof of Proposition 5.3 is similar to that of Proposition 3.2.
\begin{propo}
	\leavevmode
	Let $p \geq 2$ and $\epsilon>0$.
	\begin{enumerate}
		\item If there is some two-dimensional plane $L$ in $\R^3$ satisfying $\Phi(L)=0$, then there exist sufficiently large number $K$ and some number $C_{K}$ such that for any $f:[0,1]^3 \rightarrow \C$, any numbers $N,t$ with $N \geq K^2$ and $\frac{K^2}{N} \leq t \leq 1$, and any $\alpha \in \mathcal{P}_{t}$
		\begin{displaymath}
		\begin{split}
		\|E_{\alpha}f\|_{L^p(w_{B_{N}})}^p & \leq C_{p}K^{(1-\frac{2}{p})p} \sum_{\theta \in \mathcal{P}_{tK^{-1}} }\|E_{\theta}f\|_{L^p({w}_{B_{N}})}^p
		\\ &
		+C_{p}K^{100p}D_{bil}(Nt,p,C_{K})^p
		\sum_{\theta \in \mathcal{P}_{N^{-1}}}\|E_{\theta}f\|_{L^p({w}_{B_{N}})}^p.
		\end{split}
		\end{displaymath}
		
		\item
		If such $L$ does not exist, then
		there exist sufficiently large numbers $K,K_{1}$ and $K_{2}$ with $1 \ll  K_{2} \ll K_{1} \ll K$ and some number $C_{K}$ such that for any $f: [0,1]^3 \rightarrow \mathbb{C}$, any numbers $N$ and $t$ with $N \geq K^2$, $\frac{K^2}{N} \leq t \leq 1$  and any $\alpha \in \mathcal{P}_{t}$
		\begin{displaymath}
		\begin{split}
		\|E_{\alpha}f\|_{L^p({B_{N}})}^p & \leq 
		C_{p,\epsilon}
		K_{2}^{C\epsilon+ \frac{3}{2}(\frac{1}{2}-\frac{1}{p})p}\sum_{\theta \in \mathcal{P}_{tK_{2}^{-1}} }\|
		E_{\theta}f\|_{L^p({w}_{B_{N}})}^p
		\\&
		+C_{p,\epsilon} K_{1}^{C\epsilon+\frac{3}{2}(\frac{1}{2}-\frac{1}{p})p} \sum_{\theta \in \mathcal{P}_{tK_{1}^{-1}}}  \|E_{\theta}f\|_{L^p(w_{B_{N}})}^p
		\\&
		+C_{p,\epsilon}K^{C\epsilon+\frac{3}{2}(\frac{1}{2}-\frac{1}{p})p}
		\sum_{\theta \in \mathcal{P}_{tK^{-1}}}\|E_{ \theta}f\|_{L^p({w}_{B_{N}})}^p
		\\ &
		+C_{p,\epsilon}K^{C\epsilon+{3}(\frac{1}{2}-\frac{1}{p})p}
		\sum_{\alpha \in \mathcal{P}_{tK^{-2}}}\|E_{ \alpha}f\|_{L^p({w}_{B_{N}})}^p
		\\ &
		+C_{p,\epsilon}K^{100p}D_{bil}(Nt,p,C_{K})^p
		\sum_{\theta \in \mathcal{P}_{N^{-1}}}\|E_{\theta}f\|_{L^p({w}_{B_{N}})}^p.
		\end{split}
		\end{displaymath}
	\end{enumerate}
\end{propo}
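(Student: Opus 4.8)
The plan is to deduce Proposition 5.3 from Proposition 5.2 by the parabolic rescaling argument already used in the proof of Proposition 3.2, exploiting that $\Phi_1,\Phi_2,\Phi_3$ are homogeneous of degree two, so that the affine rescaling sending $\alpha$ onto $[0,1]^3$ carries the surface $S$ to itself after an induced linear change of variables in the range. I will carry out part $(2)$; part $(1)$ is obtained in exactly the same way, invoking part $(1)$ of Proposition 5.2 in place of part $(2)$ and using Lemma 3.1 at the end to pass from $L^p(B_N)$ to $L^p(w_{B_N})$ on the left. By translation invariance I assume $B_N=[0,N]^6$; writing $\alpha=a+[0,t^{1/2}]^3$, I set $L_\alpha(\xi)=t^{-1/2}(\xi-a)$, so that $L_\alpha(\alpha)=[0,1]^3$, and $g(\xi)=t^{\frac{3}{2}-\frac{9}{2p}}f(L_\alpha^{-1}\xi)$.

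Just as in the proof of Proposition 3.2, a routine computation using the homogeneity of $\Phi$ produces a linear map $M\colon x\mapsto\bar x$ on $\R^6$ with $t^{\frac{9}{2p}}|E_{[0,1]^3}g(\bar x)|=|E_\alpha f(x)|$, such that $M(B_N)$ is a cylinder $C_N$ of dimensions $t^{1/2}N\times t^{1/2}N\times t^{1/2}N\times tN\times tN\times tN$. Since $t\le 1$ we have $t^{1/2}N\ge tN$, so $C_N$ admits a finitely overlapping cover by balls $B_{tN}$; and since $t\ge K^2/N$ we have $tN\ge K^2$, which is precisely the hypothesis needed to apply Proposition 5.2 at scale $tN$ to the function $g$ and the surface $S$ on each such ball. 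Applying part $(2)$ of Proposition 5.2 on every $B_{tN}\subset C_N$ and summing over the cover — using that the sum of the weights $w_{B_{tN}}$ over the cover is dominated by a weight which $M^{-1}$ carries to $\lesssim w_{B_N}$ (after enlarging the exponent $C$ in the weight, which is permitted since it is unspecified) — gives the analogue of the Proposition 5.2 estimate for $g$ over $C_N$.

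Finally I undo the change of variables. Under $L_\alpha^{-1}$ a cube of $\mathcal{P}_{K_2^{-1}}$, $\mathcal{P}_{K_1^{-1}}$, $\mathcal{P}_{K^{-1}}$, $\mathcal{P}_{K^{-2}}$, resp.\ $\mathcal{P}_{(tN)^{-1}}$, for $g$ pulls back to a cube contained in $\alpha$ and lying in $\mathcal{P}_{tK_2^{-1}}$, $\mathcal{P}_{tK_1^{-1}}$, $\mathcal{P}_{tK^{-1}}$, $\mathcal{P}_{tK^{-2}}$, resp.\ $\mathcal{P}_{N^{-1}}$, for $f$; the bilinear constant reads $D_{bil}(Nt,p,C_K)$ since $g$ naturally lives at spatial scale $tN$, and the transversality level stays the same because $M$ is an affine symmetry of $S$ up to the induced linear map on the range (again by homogeneity). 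Enlarging the sums over sub-cubes of $\alpha$ to the corresponding full sums (and, for part $(1)$, applying Lemma 3.1 as noted above) yields the claimed inequalities. I do not expect a genuine obstacle here: the only points requiring care are the verification that the rescaled surface is again $S$ with the same transversality constant, and the bookkeeping of the passage $w_{B_{tN}}\to w_{B_N}$ together with the scales $tK_i^{-1}$, $tK^{-2}$, $N^{-1}$ of the sub-cubes through the rescaling — and this proceeds exactly as in Proposition 3.2.
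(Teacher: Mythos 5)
Your proposal is correct and is essentially the paper's own proof: the paper deduces Proposition 5.3 from Proposition 5.2 by exactly this parabolic rescaling, defining $g(\xi)=t^{\frac32-\frac{9}{2p}}f(L_\alpha^{-1}\xi)$ with the affine map from $(3.1)$, covering the resulting cylinder $C_N$ by balls $B_{tN}$, applying Proposition 5.2 on each, and returning to the original variables via the weight bookkeeping and Lemma 3.1. The details you add — that $tN\ge K^2$ makes Proposition 5.2 applicable, that the rescaling preserves $S$ and hence the transversality parameter, and how the dyadic scales $tK_i^{-1}$, $tK^{-2}$, $N^{-1}$ pull back — are exactly the routine steps the paper compresses into the phrase ``similar to that of Proposition 3.2.''
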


\begin{proof}
	We will prove only (2) because the proof of (1) and the proof of (2) are identical. We define the affine transformation associated with $\alpha$, which was defined in $(3.1)$. Let $C_{N}$ be the cylinder and take
	$g(\xi)=f(L_{\alpha}^{-1}\xi)t^{\frac{3}{2}-\frac{9}{2p}}$ as before. 
	We apply Proposition 5.2;
	\begin{displaymath}
	\begin{split}
	\|E_{\alpha}f\|_{L^p({B_{N}})}^p &= 
	\|E_{[0,1]^3}g\|_{L^p(C_{N})}^p \leq \sum_{B_{tN} \cap C_{N} \neq \phi } \|E_{[0,1]^3}g\|_{L^p(B_{tN})}^p 
	\\&
	\leq 
	C_{p,\epsilon}
	K_{2}^{C\epsilon+ \frac{3}{2}(\frac{1}{2}-\frac{1}{p})p}\sum_{\theta \in \mathcal{P}_{K_{2}^{-1}} }\|
	E_{\theta}g\|_{L^p(\sum_{B_{tN} \cap C_{N} \neq \phi}{w}_{B_{tN}})}^p
	\\&
	+C_{p,\epsilon} K_{1}^{C\epsilon+\frac{3}{2}(\frac{1}{2}-\frac{1}{p})p} \sum_{\theta \in \mathcal{P}_{K_{1}^{-1}}}  \|E_{\theta}g\|_{L^p(\sum_{B_{tN} \cap C_{N} \neq \phi}{w}_{B_{tN}})}^p
	\\&
	+C_{p,\epsilon}K^{C\epsilon+\frac{3}{2}(\frac{1}{2}-\frac{1}{p})p}
	\sum_{\theta \in \mathcal{P}_{K^{-1}}}\|E_{ \theta}g\|_{L^p(\sum_{B_{tN} \cap C_{N} \neq \phi}{w}_{B_{tN}})}^p
	\\&
	+C_{p,\epsilon}K^{C\epsilon+{3}(\frac{1}{2}-\frac{1}{p})p}
	\sum_{\alpha \in \mathcal{P}_{K^{-2}}}\|E_{ \alpha}g\|_{L^p(\sum_{B_{tN} \cap C_{N} \neq \phi}{w}_{B_{tN}})}^p
	\\ &
	+C_{p,\epsilon}K^{100p}D_{bil}(Nt,p,C_{K})^p
	\sum_{\theta \in \mathcal{P}_{(tN)^{-1}}}\|E_{\theta}g\|_{L^p(\sum_{B_{tN} \cap C_{N} \neq \phi}{w}_{B_{tN}})}^p.	
	\end{split}
	\end{displaymath}
	It suffices now to return to the original variables and use Lemma 3.1.
\end{proof}

\subsubsection*{Proof of Theorem 5.1.}
We will prove only (2) because the proof of (1) and the proof of (2) are identical.
Take $m=\frac{4\log N}{\log K_{2}}$ so that $K_{2}^{\frac{m}{8}}=N^{\frac{1}{2}}$.
We now use Proposition 4.3 repeatedly until every inverse of side length of dyadic cubes is in the interval $[N^{-\frac{1}{2}}K^{5},N^{-\frac{1}{2}}K^{10}]$ (Hence, we iterate this at most $m$ times), then apply Lemma 3.3 to decompose the cubes with such side length into the cubes with side length of $N^{-\frac{1}{2}}$. Then we have
\begin{displaymath}
\begin{split}
\|E_{[0,1]^3}f\|_{L^p({w}_{B_{N}})}^p \leq CN^{\frac{\log C_{p,\epsilon}}{\log K_{2}}}  N^{\epsilon} \sup_{1<M<N} \biggl[
(\frac{N}{M})^{\frac{3}{2}(\frac{1}{2}-\frac{1}{p})p}D_{bil}(\frac{N}{M},p,C_{K})^p \biggr]
\sum_{\theta \in \mathcal{P}_{N^{-1}}}\|E_{\theta}f\|_{L^p({w}_{B_{N}})}^p.
\end{split}
\end{displaymath}
Take $K_{2}$ large enough so that $\frac{\log C_{p,\epsilon}}{\log K_{2}} \leq \epsilon$.
This completes the proof of Theorem 5.1
\qed

\section{The equivalent formulations}
\label{sec:5}

The remaining sections contain no novelty. We will simply follow Bourgain and Demeter's argument. For a streamlined proof, we refer to \cite{BD-Astudy-2017}. 

In this section, we study well-known equivalent formulations. 


Let $S$ be a three-dimensional surface in $\R^6$.
For $N>1$ and $Q \subset [0,1]^{3}$, we define the $N^{-1}$-neighborhood of $S$ above $Q$ to be
\begin{displaymath}
\begin{split}
\mathcal{N}_{N^{-1}}(Q)=\{(\xi,
\Phi(\xi)+(t_{1},t_{2},t_{3})): \xi \in Q,\; -N^{-1} \leq t_{1},t_{2},t_{3} \leq N^{-1}	\}.
\end{split}
\end{displaymath}
Let $\pi:\mathcal{N}_{N^{-1}}([0,1]^3) \rightarrow \R^3$ be the standard projection map.
For $\nu>0$, we say that two sets $E,F \subset \mathcal{N}_{N^{-1}}([0,1]^3)$ are $\nu$-transverse if $\pi(E)$ and $\pi(F)$ are $\nu$-transverse. For a function $f$ and a measurable set $E \subset \mathbb{R}^{6}$, we denote by $f_{E}=(\hat{f}1_{E})^{\vee}$ the Fourier restriction to the set $E$. Here, the notation $\vee$ is the Fourier inverse transform.


Fix $\nu>0$. For any $2 \leq p < \infty$ and any number $N\geq1$, we denote by $\tilde{D}_{S}(N,p)$ the smallest constant such that the following decoupling holds;
$$\|f\|_{L^p({w}_{B_{N}})} \leq \tilde{D}_{S}(N,p)\bigl(\sum\limits_{Q \in \mathcal{P}_{N^{-1}} }\|f_{\mathcal{N}_{N^{-1}}(Q)}\|_{L^p({w}_{B_{N}})}^p\bigr)^{\frac{1}{p}}$$
for each $f: \mathbb{R}^{6} \rightarrow \mathbb{C}$ with Fourier support in $\mathcal{N}_{N^{-1}}([0,1]^3)$. Similarly, we denote by $\tilde{D}_{bil}(N,p,\nu)$ the smallest constant such that the following decoupling holds;
\begin{displaymath}
\begin{split}
\||f_{1}f_{2}|^{\frac{1}{2}}\|_{L^p(w_{B_{N}})}
\leq \tilde{D}_{bil}(N,p,\nu)\prod_{i=1}^{2}\bigl(\sum\limits_{Q \in \mathcal{P}_{N^{-1}} }\|(f_{i})_{\mathcal{N}_{N^{-1}}(Q)}\|_{L^p({w}_{B_{N}})}^p\bigr)^{\frac{1}{2p}}
\end{split}
\end{displaymath}
for any $f_{i}:\mathbb{R}^{6} \rightarrow \mathbb{C}$ with Fourier support in $\mathcal{N}_{N^{-1}}(Q_{i})$, where $Q_{1},Q_{2} \subset [0,1]^3$ are any $\nu$-transverse dyadic cubes.

\begin{propo}
	Let $\nu>0$ and $p\geq2$.
	For any $N \geq 1$
	\begin{gather*}
	D_{S}(N,p) \sim \tilde{D}_{S}(N,p), \\
	D_{bil}(N,p,\nu) \leq C_{p,\nu} \tilde{D}_{bil}(N,p,\nu).
	\end{gather*}
\end{propo}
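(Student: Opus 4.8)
The plan is to establish the two claimed equivalences via the standard dictionary between the extension-operator formulation and the Fourier-support formulation, which is entirely routine once one keeps careful track of weights. I would prove the equivalence $D_S(N,p) \sim \tilde D_S(N,p)$ in two directions, and then observe that the bilinear inequality is obtained by the same argument applied to the two transverse pieces separately, combined with H\"older.

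First I would show $\tilde D_S(N,p) \lesssim D_S(N,p)$. Given $f$ with Fourier support in $\mathcal N_{N^{-1}}([0,1]^3)$, the idea is to Fourier-expand in the $(t_1,t_2,t_3)$ directions: write $f(x) = \int_{[-N^{-1},N^{-1}]^3} E_{[0,1]^3}g_{(t)}(x)\, dt$ where, for each fixed $t$, the function $g_{(t)}$ encodes the slice of $\hat f$ at height $\Phi(\xi)+t$ above $\xi$; more precisely, using Fubini one writes $f$ as an average over $t$ of functions of the form $E_{[0,1]^3}$ applied to a suitable $\xi$-profile (up to a modulation that does not affect $L^p(w_{B_N})$ norms). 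Applying Minkowski's inequality in $t$ to move the $L^p(w_{B_N})$ norm inside, then applying the definition of $D_S(N,p)$ for each fixed $t$, and finally using Minkowski again in the reverse direction on the right-hand side (together with the fact that $f_{\mathcal N_{N^{-1}}(Q)}$ is recovered by restricting the $\xi$-integration to $Q$), yields the bound with an acceptable constant, since the $t$-domain has measure $\sim N^{-3}$ and the Jacobian factors are harmless. The converse direction $D_S(N,p) \lesssim \tilde D_S(N,p)$ is similar but easier: given $g:[0,1]^3\to\mathbb C$, the function $f = E_{[0,1]^3}g$ does not literally have compact Fourier support, but one regularizes by convolving with a bump adapted to $B_N$ (equivalently multiplying $g$ by a harmless smooth cutoff and thickening), so that $\hat f$ is supported in $\mathcal N_{N^{-1}}([0,1]^3)$ up to rapidly decaying tails absorbed into the weight $w_{B_N}$; then $f_{\mathcal N_{N^{-1}}(Q)}$ agrees with $E_Q g$ up to such tails, and $\tilde D_S(N,p)$ gives the conclusion.

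For the bilinear statement, I would take $f_1,f_2$ with Fourier supports in $\mathcal N_{N^{-1}}(Q_1),\mathcal N_{N^{-1}}(Q_2)$ for $\nu$-transverse dyadic cubes $Q_1,Q_2$, and run exactly the same Fourier expansion in the $t$-variables for each $f_i$ separately, writing $f_i$ as an average over $t_i$ of (modulations of) $E_{Q_i}g_{i,(t_i)}$. Then $|f_1 f_2|^{1/2}$ is controlled by a double average in $(t_1,t_2)$ of $|E_{Q_1}g_{1,(t_1)} E_{Q_2}g_{2,(t_2)}|^{1/2}$; applying Minkowski in $(t_1,t_2)$, then the definition of $D_{bil}(N,p,\nu)$ (note $Q_1,Q_2$ are $\nu$-transverse dyadic cubes, exactly what that definition requires), and then Minkowski back on the right-hand side, gives $\tilde D_{bil}(N,p,\nu) \lesssim_{p,\nu} D_{bil}(N,p,\nu)$; running the argument in the opposite direction (with the regularization trick as above) gives $D_{bil}(N,p,\nu) \le C_{p,\nu}\tilde D_{bil}(N,p,\nu)$, which is the stated inequality.

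The main obstacle, such as it is, is purely bookkeeping: one must verify that all the modulation factors $e(\cdot)$ introduced by shifting Fourier support have modulus one and hence do not affect any $L^p(w_{B_N})$ norm, and that the regularization in the converse direction genuinely pins the Fourier support inside $\mathcal N_{N^{-1}}([0,1]^3)$ while the Schwartz tails are dominated by $w_{B_N}$ (this is where the large exponent $C$ in the definition of $w_{B_N}$ is used). None of this is deep, which is why only the one-sided bilinear inequality is asserted — the reverse bilinear direction, while true, is not needed downstream, so I would simply remark that it follows by the symmetric argument and omit the details.
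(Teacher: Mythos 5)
The easy direction $D_S(N,p)\lesssim\tilde D_S(N,p)$ and the one-sided bilinear estimate in your proposal are essentially the paper's argument: thicken $E_{[0,1]^3}g$ into a genuine Fourier-supported function $f$, observe $\|f\|\sim N^{-3}\|E_{[0,1]^3}g\|$ on $B_N$, apply $\tilde D_S$, and convert back; minor quibble that $E_{[0,1]^3}g$ does have compact Fourier support (on the surface), the issue is that it is a singular measure rather than an integrable density, but the thickening fixes exactly that.

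The hard direction $\tilde D_S(N,p)\lesssim D_S(N,p)$, however, has a genuine gap. After slicing $f=\int_{[-N^{-1},N^{-1}]^3} e(\tau\cdot(x_4,x_5,x_6))E_{[0,1]^3}g_{(\tau)}\,d\tau$, Minkowski and the definition of $D_S$ give
\[
\|f\|_{L^p(w_{B_N})}\lesssim D_S(N,p)\int_\tau\Bigl(\sum_{Q}\|E_Q g_{(\tau)}\|_{L^p(w_{B_N})}^p\Bigr)^{1/p}d\tau,
\]
and you then need to dominate the right-hand side by $\bigl(\sum_Q\|f_{\mathcal N_{N^{-1}}(Q)}\|^p\bigr)^{1/p}$. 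There is no ``Minkowski in reverse'' available here: the correct direction of Minkowski gives $\|f_{\mathcal N_{N^{-1}}(Q)}\|\le\int_\tau\|E_Qg_{(\tau)}\|\,d\tau$, and hence $\bigl(\sum_Q\|f_{\mathcal N_{N^{-1}}(Q)}\|^p\bigr)^{1/p}\le\int_\tau\bigl(\sum_Q\|E_Qg_{(\tau)}\|^p\bigr)^{1/p}d\tau$, which is an inequality pointing the \emph{wrong} way. The $\tau$-integral can experience cancellation that makes $f_{\mathcal N_{N^{-1}}(Q)}$ much smaller than any of its slices, so the desired domination simply fails as stated; the small $\tau$-measure and unimodular modulations do not rescue this. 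This is precisely the step for which the paper's proof introduces the Taylor expansion of $e(\tau\cdot(x_4,x_5,x_6))$: it replaces the continuous $\tau$-integral by an absolutely convergent series $\sum_{j_1,j_2,j_3}\frac{2^{j_1+j_2+j_3}}{j_1!j_2!j_3!}E_Qg_{j_1,j_2,j_3}$, so Minkowski only needs to be applied in the summable $j$-index, and the crucial content becomes the uniform-in-$j$ mollifier estimate $\|E_Qg_{j_1,j_2,j_3}\|_{L^p(B_N)}\lesssim\|f_{\mathcal N_{N^{-1}}(Q)}\|_{L^p(w_{B_N})}$ proved via a convolution kernel with controlled decay. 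That mollifier estimate is the substantive part of the proposition, not mere bookkeeping, and your sketch omits it entirely.
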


The proof of Proposition 6.1 is identical to that of Theorem 5.1 in \cite{BD-Astudy-2017}.

\begin{proof}
	We may assume that the cube $B_{N}$ in the definition of $D_{S}(N,p)$ is $[0,N]^{6}$. Let $g:\R^3 \rightarrow \C$ be a function.  Define a function $f$ to be
	\begin{displaymath}
	\hat{f}(\xi,\Phi(\xi)+(\tau_{1},\tau_{2},\tau_{3}))=g(\xi)\prod_{i=1}^{3}1_{[0,N^{-1}/10]}(\tau_{i})	\end{displaymath}
	Note that 
	$$f(x_{1},\ldots,x_{6})=E_{[0,1]^3}g(x_{1},\ldots,x_{6})
	\prod_{i=1}^{3}
	\int_{0}^{N^{-1}/10}e(tx_{3+i})\,dt,$$
	and 
	$$f_{\mathcal{N}_{N^{-1}}(Q)}(x)=E_{Q}g(x)\prod_{i=1}^{3}\int_{0}^{N^{-1}/10}e(t x_{i+d})\,dt.$$
	Note also that
	$|\int_{0}^{N^{-1}/10} e(tx_{3+i})\,dt | \sim N^{-1}$ if $x_{3+i} \in [0,N]$.
	These give
	\begin{displaymath}
	\begin{split}
	\|E_{[0,1]^3}g\|_{L^p(B_{N})} &\lesssim N^{3}\|f\|_{L^p(w_{B_{N}})} 
	\lesssim N^{3}
	\tilde{D}_{S}(N,p)
	\bigl(\sum_{Q \in \mathcal{P}_{N^{-1}}}\|f_{\mathcal{N}_{N^{-1}}(Q)}\|_{L^p(w_{B_{N}})}^p		\bigr)^{\frac{1}{p}}
	\\&
	\lesssim \tilde{D}_{S}(N,p) \bigl(
	\sum_{Q \in \mathcal{P}_{N^{-1}}}
	\|E_{Q}g\|_{L^p(w_{B_{N}})}^p
	\bigr)^{\frac{1}{p}}.
	\end{split}
	\end{displaymath}
	Now, Lemma 3.1 gives $D_{S}(N,p) \lesssim \tilde{D}_{S}(N,p)$. Similarly, one can get $D_{bil}(N,p,\nu) \lesssim \tilde{D}_{bil}(N,p,\nu)$.
	
	Now, we will show that $\tilde{D}_{S}(N,p) \lesssim D_{S}(N,p)$. By a change of variables,
	\begin{displaymath}
	\begin{split}
	f(x_{1},\ldots,x_{6}) &=\int_{\mathcal{N}_{N^{-1}}([0,1]^3)}\hat{f}(\xi,\tau)e((\xi,\tau)\cdot x)\,d\xi d\tau
	\\& = \sum_{Q \in \mathcal{P}_{N^{-1}}}
	\int_{Q \times [-N^{-1},N^{-1}]^{3}} \hat{f}(\xi,\Phi(\xi)+\tau)e((\xi,\Phi(\xi))\cdot x)e(\tau \cdot (x_{4},x_{5},x_{6}))\,d\xi d\tau.
	\end{split}
	\end{displaymath}
	We will deal with the term $e(\tau \cdot (x_{4},x_{5},x_{6}))$ by using the Taylor expansion
	\begin{displaymath}
	\begin{split}
	e(\tau \cdot (x_{4},x_{5},x_{6}))=\prod_{k=1}^{3}
	\sum_{j=0}^{\infty} \frac{1}{j!}(\frac{2ix_{3+k}}{N})^j(\frac{N\tau_{i}}{2})^j.
	\end{split}
	\end{displaymath}
	By putting this, for $x \in B_{N}$ we have
	\begin{displaymath}
	\begin{split}
	|f(x)| \leq \sum_{j_{1},j_{2},j_{3}}\frac{2^{j_{1}}2^{j_{2}} 2^{j_{3}}}{j_{1}!j_{2}! j_{3}!}|\sum_{Q \in \mathcal{P}_{N^{-1}}}
	E_{Q}g_{j_{1},j_{2},j_{3}}(x)|,
	\end{split}
	\end{displaymath}
	where $$g_{j_{1},j_{2},j_{3}}(\xi)=\int_{[-N^{-1},N^{-1}]^3}\hat{f}(\xi,\Phi(\xi)+\tau)\prod_{i=1}^{3}(\frac{N\tau_{i}}{2})^{j_{i}}\, d\tau_{1} d\tau_{2} d\tau_{3}.$$	
	From the definition of ${D}_{S}(N,p)$, we have
	\begin{displaymath}
	\begin{split}
	\|f\|_{L^p({B_{N}})} \lesssim {D}_{S}(N,p)
	\sum_{j_{1},j_{2},j_{3}}\frac{2^{j_{1}}2^{j_{2}} 2^{j_{3}}}{j_{1}!j_{2} j_{3}!}
	\bigl(		\sum_{Q \in \mathcal{P}_{N^{-1}}}\|E_{Q}g_{j_{1},j_{2},j_{3}}\|_{L^p(w_{B_{N}})}^p
	\bigr)^{\frac{1}{p}}.
	\end{split}
	\end{displaymath}
	Fix $Q=c+[0,N^{-\frac{1}{2}}]=(c_{1},c_{2},c_{3})+[0,N^{-\frac{1}{2}}]^3$. By Lemma 3.1, the inequality
	\begin{displaymath}
	\begin{split}
	\|E_{Q}g_{j_{1},j_{2},j_{3}}\|_{L^p(w_{B_{N}})} \lesssim \|f_{\mathcal{N}_{N^{-1}}(Q)}\|_{L^p(w_{B_{N}})},
	\end{split}
	\end{displaymath}
	which is uniform over $j_{1},j_{2},j_{3}$, implies the desired results, and this follows from
	\begin{equation}
	\begin{split}
	\|E_{Q}g_{j_{1},j_{2},j_{3}}\|_{L^p({B_{N}})} \lesssim \|f_{\mathcal{N}_{N^{-1}}(Q)}\|_{L^p(w_{B_{N}})}.
	\end{split}
	\end{equation}
	We take a Schwartz function $M_{j}(t)$ which agrees with $t^{j}$ on $[-\frac{1}{2},\frac{1}{2}]$ and satisfies the derivative bound
	\begin{displaymath}
	\begin{split}
	\|\frac{d^s}{dt^{s}}M_{j}\|_{L^{\infty}(\mathbb{R})} \lesssim_{s} 1,
	\end{split}
	\end{displaymath}
	uniformly over $j \geq 1$, for each $s \geq 0$.
	The following equality gives the relation between $E_{Q}g_{j_{1},j_{2},j_{3}}$ and $ f_{\mathcal{N}_{N^{-1}}(Q)}$:
	\begin{equation}
	\begin{split}
	E_{Q}g_{j_{1},j_{2},j_{3}}(x) &
	=\int_{\mathcal{N}_{N^{-1}}(Q)} \hat{f}(\xi,\tau)m_{j_{1},j_{2},j_{3}}(\xi,\tau)e((\xi,\tau) \cdot x)\, d\xi d\tau
	\\&= (f_{\mathcal{N}_{N^{-1}}(Q)}* ({m_{j_{1},j_{2},j_{3}}})^{\vee})(x)
	.
	\end{split}
	\end{equation}
	Here, $m_{j_{1},j_{2},j_{m}}$ is defined by 
	\begin{gather*}
		m_{j_{1},j_{2},j_{3}}(\xi,\tau) 
		=
		e((\Phi(\xi)-\tau)\cdot (x_{4},x_{5},x_{6}))
		\prod_{i=1}^{3}M_{j_{i}}(\frac{N(\tau_{i}-\Phi_{i}(\xi))}{2})
		\prod_{i=1}^{3}H(N^{\frac{1}{2}}(\xi_{i}-c_{i})),
	\end{gather*}
	and $H(\xi_{i})$ is a Schwartz function equal to $1$ on $[0,1]$ and $0$ on $(-\infty,-1] \cup [2,\infty)$.
	The above equality immediately follows from a change of variables. Now, we will estimate $({m_{j_{1},j_{2},j_{3}}})^{\vee}(y)$. 
	A change of variables gives
	\begin{displaymath}
	\begin{split}
	|({m_{j_{1},j_{2},j_{3}}})^{\vee}(y)| &
	=|\prod_{i=1}^{3}\frac{2}{N}\widehat{M}_{j_{i}}
	(\frac{2(x_{3+i}-y_{3+i})}{N})||
	\int e((\xi,\Phi(\xi+c)) \cdot y) \prod_{i=1}^{3}H({N^{\frac{1}{2}}\xi_{i}})\,d\xi|
	.
	\end{split}
	\end{displaymath}
	Since $\Phi_{i}$ are quadratic polynomials, we can write $\Phi(\xi+c)=\Phi(\xi)+\Phi(c)+\xi A$ for some 3 by 3 matrix $A$ (with transpose $A^{T}$). Hence, we can write the second term as
	\begin{displaymath}
	\begin{split}
	\frac{1}{N^{{3}/{2}}} |\int e((\xi,\Phi(\xi))\cdot (\frac{(y_{1},y_{2},y_{3})+A^T(y_{4},y_{5},y_{6})}{N^{1/2}},\frac{(y_{4},y_{5},y_{6})}{N})) \prod_{i=1}^{3}H(\xi_{i})\, d\xi|.
	\end{split}
	\end{displaymath}
	By integration by parts and the construction of the function $M_{j}$, for $x\in B_{N}$
	\begin{displaymath}
	\begin{split}
	|({m_{j_{1},j_{2},j_{3}}})^{\vee}(y)| \lesssim
	\frac{N^{-3}}{1+|\frac{(y_{4}-x_{4},y_{5}-x_{5},y_{6}-x_{6})}{N}|^{500}} \frac{N^{-\frac{3}{2}}}{1+|\frac{(y_{1},y_{2},y_{3})+A^T (y_{4},y_{5},y_{6})}{N^{1/2}}|^{500}}.
	\end{split}
	\end{displaymath}
	Now, we are ready to obtain $(6.1)$. By $(6.2)$, Young's inequality and the above inequality
	\begin{gather*}
		\|E_{Q}g_{j_{1},j_{2},j_{3}}\|_{L^p(B_{N})}^p 
		\lesssim  \|
		f_{\mathcal{N}_{N^{-1}}(Q)}*({m_{j_{1},j_{2},j_{3}}})^{\vee}
		\|_{L^p(B_{N})}^p
		\lesssim \|f_{\mathcal{N}_{N^{-1}}(Q)}  \|_{L^p(w_{B_{N}})}^p.
	\end{gather*}
	This completes the proof of Proposition 6.1.
\end{proof}

Take a collection of non-negative smooth functions $\{\chi(\, \cdot+k) \}_{k \in \mathbb{N}}$ such that  $\chi(\xi)=1$ if $\xi \in [-1,1]$ and $\chi(\xi)=0$ if $\xi \in [-2,2]^{3}$.
For each cube $Q=(i_{1},i_{2},i_{3})+[0,N^{-\frac{1}{2}}]^{3} \in \mathcal{P}_{N^{-1}}$, we define a function $\Xi_{Q}$ to be
\begin{displaymath}
\begin{split}
\widehat{\Xi_{Q}}(\xi_{1},\ldots,\xi_{6})=\prod_{k=1}^{3}
\frac{\chi(N^{\frac{1}{2}}(\xi_{k}-i_{k}))}{\sum_{m \in \mathbb{Z}^{3} }\chi(N^{\frac{1}{2}}\xi_{k}-m)}
\prod_{k=1}^{3}
\chi(N({\xi_{3+k}-\Phi_{k}(\xi_{1},\xi_{2},\xi_{3})})).
\end{split}
\end{displaymath}
Note that $\|\Xi_{Q}\|_{L^1} \sim 1$, $\mathrm{supp}(\widehat{\Xi_{Q}}) \subset \mathcal{N}_{5N^{-1}}(5Q)$ and $\sum_{Q \in \mathcal{P}_{N^{-1}} }\widehat{\Xi_{Q}}(\xi) =1$ for all $\xi \in \mathcal{N}_{N^{-1}}([0,1]^3)$.

We denote by $D^{(1)}_{S}(N,p), D^{(2)}_{S}(N,p)$ the smallest constants such that the following decoupling holds;
\begin{gather*}
	\|f\|_{L^p({w}_{B_{N}})} \leq {D}^{(1)}_{S}(N,p)\bigl(\sum\limits_{Q \in \mathcal{P}_{N^{-1}} }\|f* \Xi_{Q} \|_{L^p({w}_{B_{N}})}^p\bigr)^{\frac{1}{p}}
	\\
	\|f\|_{L^p(\mathbb{R}^{6})} \leq {D}^{(2)}_{S}(N,p)\bigl(\sum\limits_{Q \in \mathcal{P}_{N^{-1}} }\|f*\Xi_{Q}\|_{L^p(\mathbb{R}^{6})}^p\bigr)^{\frac{1}{p}}
\end{gather*}
for any $f:\mathbb{R}^{6} \rightarrow \mathbb{C}$ with Fourier support in $\mathcal{N}_{N^{-1}}([0,1]^3)$. 

By using the fact that $\{\widehat{\Xi_{Q}}\}$ forms a partition of unity on $\mathcal{N}_{N^{-1}}([0,1]^3)$ and has a finitely overlapping property, one can prove the following proposition.

\begin{propo}
	Let $p \geq 2$.
	For any $N \geq 1$
	\begin{gather*}
	 \tilde{D}_{S}(N,p) \sim {D}^{(1)}_{S}(N,p) \sim {D}^{(2)}_{S}(N,p).
	\end{gather*}
\end{propo}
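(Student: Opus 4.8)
The plan is to prove the three equivalences $\tilde D_S(N,p)\sim D^{(1)}_S(N,p)\sim D^{(2)}_S(N,p)$ by exploiting two structural facts about the bump functions $\Xi_Q$: first, that $\{\widehat{\Xi_Q}\}_{Q\in\mathcal P_{N^{-1}}}$ is a partition of unity on $\mathcal N_{N^{-1}}([0,1]^3)$, so that $f=\sum_Q f*\Xi_Q$ for any $f$ with Fourier support in that neighborhood; second, that the family $\{\operatorname{supp}\widehat{\Xi_Q}\}$ has the finite overlap property, each being contained in $\mathcal N_{5N^{-1}}(5Q)$. A third ingredient is that $|\Xi_Q|$ (and its translates) are essentially bounded by a normalized $L^1$ weight adapted to $w_{B_N}$, i.e. $\|\Xi_Q\|_{L^1}\sim 1$ with fast decay, which lets convolution with $\Xi_Q$ be controlled by Young's inequality together with the by-now-standard fact (used repeatedly above, cf. the estimates for $m_{j_1,j_2,j_3}^\vee$) that $w_{B_N}*|\Xi_Q|\lesssim w_{B_N}$.

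First I would show $\tilde D_S(N,p)\lesssim D^{(1)}_S(N,p)$. Given $f$ with Fourier support in $\mathcal N_{N^{-1}}([0,1]^3)$, apply the $D^{(1)}_S$-decoupling and then, for each $Q$, compare $\|f*\Xi_Q\|_{L^p(w_{B_N})}$ with $\|f_{\mathcal N_{N^{-1}}(Q)}\|_{L^p(w_{B_N})}$. Since $\operatorname{supp}\widehat{\Xi_Q}\subset\mathcal N_{5N^{-1}}(5Q)$ and a bounded number of cubes $Q'\in\mathcal P_{N^{-1}}$ meet $5Q$, we have $f*\Xi_Q=\bigl(\sum_{Q'\sim Q} f_{\mathcal N_{N^{-1}}(Q')}\bigr)*\Xi_Q$, and Young's inequality against the $L^1$-normalized, rapidly decaying $\Xi_Q$ (absorbing the weight as above) yields $\|f*\Xi_Q\|_{L^p(w_{B_N})}\lesssim\sum_{Q'\sim Q}\|f_{\mathcal N_{N^{-1}}(Q')}\|_{L^p(w_{B_N})}$; summing $p$-th powers and using finite overlap gives the claim. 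For the reverse $D^{(1)}_S(N,p)\lesssim\tilde D_S(N,p)$, run the same comparison in the other direction: write $f_{\mathcal N_{N^{-1}}(Q)}=f_{\mathcal N_{N^{-1}}(Q)}*\bigl(\sum_{Q'\sim Q}\Xi_{Q'}\bigr)$ using that $\sum\widehat{\Xi_{Q'}}=1$ on the support of $\widehat{f_{\mathcal N_{N^{-1}}(Q)}}$, and estimate each piece by Young's inequality as before.

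Next, for $D^{(1)}_S(N,p)\sim D^{(2)}_S(N,p)$ the difference is only the weight $w_{B_N}$ versus all of $\mathbb R^6$. The direction $D^{(2)}_S\lesssim D^{(1)}_S$ follows from Lemma 3.1 (the localization principle): set $O_1(w)=\|f\|_{L^p(w)}$ restricted appropriately and $O_2(w)=\bigl(\sum_Q\|f*\Xi_Q\|_{L^p(w)}^p\bigr)^{1/p}$, check the four monotonicity/subadditivity/superadditivity axioms, and apply the lemma over a cover of $\mathbb R^6$ by cubes $B_N$; superadditivity of $O_2$ uses that $p\ge 2$. The direction $D^{(1)}_S\lesssim D^{(2)}_S$ is immediate since $w_{B_N}\lesssim 1$ pointwise after normalization and one can compare $\|\cdot\|_{L^p(w_{B_N})}$ to $\|\cdot\|_{L^p(\mathbb R^6)}$ on each ball and sum, again via Lemma 3.1 in the opposite configuration.

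The main obstacle is the bookkeeping in the convolution comparisons: one must verify cleanly that $|\Xi_Q|^\vee$—rather $\Xi_Q$ itself as a kernel—has enough decay that $w_{B_N}*|\Xi_Q|\lesssim w_{B_N}$ uniformly in $Q$ and $N$, despite $\Xi_Q$ living at two different scales ($N^{-1/2}$ in the base variables, $N^{-1}$ in the lifted variables). This is exactly the type of estimate already carried out for $m_{j_1,j_2,j_3}^\vee$ in the proof of Proposition 6.1, so I would simply invoke that computation; the rest is the routine finite-overlap and partition-of-unity accounting together with two applications of Lemma 3.1, and I would not write those details out in full.
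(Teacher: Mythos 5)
Your argument for $\tilde D_S\lesssim D^{(1)}_S$ is fine (apply the $D^{(1)}_S$ inequality, write $f*\Xi_Q=\sum_{Q'\sim Q}f_{\mathcal N_{N^{-1}}(Q')}*\Xi_Q$, and use weighted Young plus finite overlap), and the $D^{(1)}_S\sim D^{(2)}_S$ step is indeed a localization matter; but note that to invoke Lemma 3.1 you should take $O_1(w)=\|f\|_{L^p(w)}^p$ and $O_2(w)=\sum_Q\|f*\Xi_Q\|_{L^p(w)}^p$, which are linear in $w$ and so satisfy sub/superadditivity trivially. The functionals $\|\cdot\|_{L^p(w)}$ and $\bigl(\sum_Q\|\cdot*\Xi_Q\|_{L^p(w)}^p\bigr)^{1/p}$ as you wrote them are both merely subadditive in $w$ for $p\geq1$, so property (3) of the lemma would fail with that choice.

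The genuine gap is in the direction $D^{(1)}_S\lesssim\tilde D_S$. You write $f_{\mathcal N_{N^{-1}}(Q)}=\sum_{Q'\sim Q}f_{\mathcal N_{N^{-1}}(Q)}*\Xi_{Q'}$ and then propose to ``estimate each piece by Young's inequality as before.'' Young's inequality only gives $\|f_{\mathcal N_{N^{-1}}(Q)}*\Xi_{Q'}\|_{L^p(w_{B_N})}\lesssim\|f_{\mathcal N_{N^{-1}}(Q)}\|_{L^p(w_{B_N})}$, which is circular; what you actually need is the comparison $\|f_{\mathcal N_{N^{-1}}(Q)}*\Xi_{Q'}\|\lesssim\|f*\Xi_{Q'}\|$. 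On the Fourier side $f_{\mathcal N_{N^{-1}}(Q)}*\Xi_{Q'}=\bigl(1_{\mathcal N_{N^{-1}}(Q)}\,\widehat{f*\Xi_{Q'}}\bigr)^{\vee}$, so the obstruction is the sharp Fourier cutoff $1_{\mathcal N_{N^{-1}}(Q)}$, an indicator of a curved slab, which is not addressed by Young's inequality at all. The missing observation is that $\hat f$ is already supported in $\mathcal N_{N^{-1}}([0,1]^3)$, where the condition $|\tau_i-\Phi_i(\xi)|\leq N^{-1}$ is automatic; hence on $\mathrm{supp}\,\hat f$ (and so on $\mathrm{supp}\,\widehat{f*\Xi_{Q'}}$) the indicator $1_{\mathcal N_{N^{-1}}(Q)}(\xi,\tau)$ coincides with $1_{Q}(\xi)$, a rectangular box multiplier in the three base frequency variables only. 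That multiplier is a composition of one-dimensional modulated Hilbert transforms, hence bounded on $L^p(\R^{6})$ for $1<p<\infty$, and the weighted statement then follows by Lemma 3.1 applied to the $p$-th powers. This box-multiplier reduction, not Young's inequality, is the one nontrivial point of the proposition, and your write-up does not reach it.
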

The same observation applies to the family of constants related to $\tilde{D}_{bil}(N,p,\nu)$.

We need parabolic rescaling for the equivalent formulations. The proof of Proposition 6.3 is identical to that of Proposition 3.2. Hence, we will omit the detail.

\begin{propo}[Parabolic rescaling]
	Suppose that two numbers $N,\sigma$ with $0< N^{-1} \leq \sigma $ are dyadic numbers, and let $\tau=a+[0,\sigma^{\frac{1}{2}}]^3 \in \mathcal{P}_{\sigma}$. Then for each $f: \mathbb{R}^{6} \rightarrow \mathbb{C}$ with Fourier support in $\mathcal{N}_{\sigma}(\tau)$, we have
	\begin{displaymath}
	\begin{split}
	\|f\|_{L^p(\mathbb{R}^{6})} \lesssim 
	{{D}}_{S}({N\sigma},p)
	\bigl( \sum\limits_{\substack{\theta \in \mathcal{P}_{N^{-1}}, \, \theta \subset \tau}}\|f*\Xi_{\theta}\|_{L^p(\mathbb{R}^{6})}^p	\bigr)^{\frac{1}{p}}.
	\end{split}
	\end{displaymath}
\end{propo}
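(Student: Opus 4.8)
The plan is to run the parabolic‑rescaling argument of Proposition 3.2 in the Fourier/neighborhood picture. Since the conclusion is stated with the unweighted norm $\|\cdot\|_{L^p(\mathbb{R}^6)}$ and with the bump functions $\Xi_\theta$, it is cleanest to pass first to the equivalent constant $D^{(2)}_S$: by Proposition 6.1 and Proposition 6.2 one has $D_S(N\sigma,p)\sim\tilde{D}_S(N\sigma,p)\sim D^{(2)}_S(N\sigma,p)$ (all of these make sense because $N\sigma\ge 1$). So it suffices to produce, for the function $g$ obtained from $f$ by the rescaling described below, the bound $\|g\|_{L^p(\mathbb{R}^6)}\le D^{(2)}_S(N\sigma,p)\bigl(\sum_{\theta'\in\mathcal{P}_{(N\sigma)^{-1}}}\|g*\Xi_{\theta'}\|_{L^p(\mathbb{R}^6)}^p\bigr)^{1/p}$, and then translate it back.

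First I would write $a=(a_1,a_2,a_3)$, recall the affine map $L_\tau$ of $(3.1)$, and use that $\Phi_1,\Phi_2,\Phi_3$ are homogeneous of degree two to expand, for $\xi=a+\sigma^{1/2}\eta$,
$$\Phi_j(\xi)=\Phi_j(a)+\nabla\Phi_j(a)\cdot(\xi-a)+\sigma\,\Phi_j(\eta),\qquad j=1,2,3.$$
This gives an explicit affine map $G$ on frequency space $\mathbb{R}^6$ — send the first three coordinates by $\xi\mapsto(\xi-a)/\sigma^{1/2}$ and the last three by $\zeta\mapsto\sigma^{-1}\bigl(\zeta-\Phi(a)-\nabla\Phi(a)(\xi-a)\bigr)$ — which carries the graph of $\Phi$ over $\tau$ onto the graph of $\Phi$ over $[0,1]^3$, multiplies the fiber thickness by $\sigma^{-1}$ (hence carries the $N^{-1}$‑neighborhood of the surface over $\tau$ onto the set $\mathcal{N}_{(N\sigma)^{-1}}([0,1]^3)$ appearing in the definition of $D^{(2)}_S(N\sigma,p)$), sends each $\theta\subset\tau$ with $\theta\in\mathcal{P}_{N^{-1}}$ bijectively onto a cube of $\mathcal{P}_{(N\sigma)^{-1}}$, and carries the bump $\widehat{\Xi_\theta}$ to a smooth bump adapted to the same plank as $\widehat{\Xi_{G\theta}}$, up to the fixed bounded shear $\nabla\Phi(a)$ in the last three variables. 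Dualizing, I would set $g=f\circ M$ for the corresponding linear map $M$ on physical space (composed with a modulation), so that $\|g\|_{L^p}=|\det M|^{-1/p}\|f\|_{L^p}$, $\|g*\Xi_{G\theta}\|_{L^p}\sim|\det M|^{-1/p}\|f*\Xi_\theta\|_{L^p}$, and $\widehat{g}$ is supported over $[0,1]^3$. Applying the definition of $D^{(2)}_S(N\sigma,p)$ to $g$ — only the cubes $\theta'=G\theta$ with $\theta\subset\tau$ contribute, since $\widehat{g}$ lies over $G\tau=[0,1]^3$ — and undoing $M$ gives the proposition.

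The hard part is purely the bookkeeping of this change of variables, and it is exactly the bookkeeping already carried out in the text: (i) checking that $G$ maps the $N^{-1}$‑neighborhood over $\tau$ onto $\mathcal{N}_{(N\sigma)^{-1}}([0,1]^3)$, where the degree‑two homogeneity of $\Phi$ enters in the same way as in Proposition 3.2; and (ii) verifying that exchanging the rescaled bumps $\widehat{\Xi_\theta}\circ G^{-1}$ for the genuine $\widehat{\Xi_{G\theta}}$ costs only a convolution with a Schwartz kernel of bounded $L^1$‑norm, so that $\|g*\Xi_{G\theta}\|_{L^p}$ and the quantity built from $f*\Xi_\theta$ agree up to a constant — this is precisely the estimate carried out in the proof of Proposition 6.1 with the kernels $(m_{j_1,j_2,j_3})^{\vee}$, which one simply reuses. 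Unlike in Proposition 3.2 there is no ball $B_N$ in the statement, so no tiling of $\mathbb{R}^6$ by smaller balls is needed after rescaling — the change of variables is global — which is why the argument is, if anything, a shade shorter than that of Proposition 3.2.
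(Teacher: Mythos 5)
Your argument is correct and is essentially the same rescaling by $L_\tau$ that the paper has in mind when it says the proof of Proposition~6.3 is ``identical to that of Proposition~3.2''; you have simply supplied the details the paper omits. Your two supplementary observations — passing first to $D_S^{(2)}$ via Propositions~6.1--6.2 so that the change of variables can be applied globally in $L^p(\mathbb{R}^6)$ (thereby avoiding the cylinder/ball-tiling step needed in Proposition~3.2), and noting that exchanging $\widehat{\Xi_\theta}\circ G^{-1}$ for $\widehat{\Xi_{G\theta}}$ costs only a convolution with an $L^1$-bounded Schwartz kernel, exactly as in the proof of Proposition~6.1 — are both sound and are precisely the bookkeeping the paper leaves to the reader.
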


\section{The wave packet decomposition}
\label{sec:7}

In this section, we will obtain the wave packet decomposition, which will be used to prove Proposition 8.3. The proof of the wave packet decomposition is well known. We will follow the proof in \cite{ GSS-Improvements-2008, GS-On-2009}.

For each rectangle $R$, we denote by $a_{R}$ an affine map taking $[0,1]^{6}$ to the rectangle $R$.
We take a Schwartz function $\phi$ such that the function is strictly positive in $B_{2}(0)$, the Fourier support is in $B_{C}(0)$ and $\sum_{n \in \mathbb{Z}^{6}}\phi(\cdot + n)^2=1$ for some $C>0$. Let $\phi_{R}=\phi \circ a_{R}^{-1}$.


\begin{deff}
	Let $N \geq 1$. Let $\theta=c+[0,N^{-\frac{1}{2}}]^3$, $c \in \mathbb{R}^{3}$. We take a rectangular box $R_{\theta}$ of dimensions $C(N^{-\frac{1}{2}} \times N^{-\frac{1}{2}} \times N^{-\frac{1}{2}} \times N^{-1} \times N^{-1} \times N^{-1})$ such that 
	\begin{enumerate}
		\item $\mathcal{N}_{N^{-1}}(\theta) \subset R_{\theta}$
		\item the short directions are parallel to the subspace spanned by $m_{1}(c),m_{2}(c),m_{3}(c)$
	\end{enumerate}
	for some constant $C$ independent of $N$ and the choice of $\theta$.
	We denote the dual set of $R_{\theta}$ by $R_{\theta}^*$, and we write $R_{\theta}^*\| \theta$ if the above conditions are satisfied.
\end{deff}

\begin{lem}[The wave packet decomposition]
	Let $N \geq 1$ and $\epsilon >0$. Let $Q$ be a cube with a side length of $2N$ in $\mathbb{R}^{6}$. Let $f$ be a function with $\mathrm{supp}\hat{f} \subset \mathcal{N}_{N^{-1}}([0,1]^3)$. Assume that 
	$$\sup_{\theta \in \mathcal{P}_{N^{-1}}} \|f*\Xi_{\theta}\|_{L^{\infty}(\mathbb{R}^{6})} \leq A$$
	for some number $A$.
	Then we can decompose $f$ into
	\begin{equation}
	\begin{split}
	f(x)=\sum_{AN^{-30} \lesssim	 2^m \lesssim A} \sum_{j=1}^{O(\log N)}f^{[j,m]}(x) + g(x), \;\;\;\;\; x \in \mathbb{R}^{6},
	\end{split}
	\end{equation}
	such that $f^{[j,m]}$ and $g$ satisfy the following:
	\begin{enumerate}
		\item The function $g$ is an essentially error function. More precisely,
		$$\|g\|_{L^{\infty}(Q)} \lesssim_{\epsilon} N^{-24}A.$$
		\item For every $2 \leq p < \infty$, $j$ and $m$, we have
		\begin{equation}
		\begin{split}
		&\|f^{[j,m]}\|_{L^2(\mathbb{R}^{6})}^{2}
		(\sum_{\theta \in \mathcal{P}_{N^{-1}} }\|f^{[j,m]}*\Xi_{\theta}\|_{L^{\infty}(\mathbb{R}^{6})}^2)^{\frac{p-2}{2}}
		\lesssim (\sum_{\theta \in \mathcal{P}_{N^{-1}}}\|f*\Xi_{\theta}\|_{L^p(\mathbb{R}^{6})}^2)^{\frac{p}{2}}.
		\end{split}
		\end{equation}
	\end{enumerate}
\end{lem}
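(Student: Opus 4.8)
The plan is to run a standard wave packet decomposition adapted to the frequency partition $\{\Xi_\theta\}_{\theta\in\mathcal P_{N^{-1}}}$ (in the spirit of \cite{GSS-Improvements-2008,GS-On-2009}), and then to read off the bands $f^{[j,m]}$ and the error $g$ from a \emph{double} dyadic pigeonholing: once in the $L^\infty$ height of a single wave packet, and once in the number of non-negligible wave packets lying above a fixed cap $\theta$. The second of these is the real content.

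Since $\sum_\theta\widehat{\Xi_\theta}\equiv 1$ on $\mathcal N_{N^{-1}}([0,1]^3)$, we have $f=\sum_\theta f*\Xi_\theta$, and each $f*\Xi_\theta$ has Fourier support in a box comparable to $R_\theta$ (notation of Definition 7.1). Tiling $\R^6$ by the translates $T$ of the dual box $R_\theta^*$ and using the partition of unity $\sum_T\phi_T^2\equiv 1$ from Section 7, split $f*\Xi_\theta=\sum_T h_{\theta,T}$ with $h_{\theta,T}:=(f*\Xi_\theta)\phi_T^2$, so that $f=\sum_{\theta,T}h_{\theta,T}$ exactly. Each $h_{\theta,T}$ is an elementary wave packet: $\widehat{h_{\theta,T}}$ sits in a $C$-dilate of $R_\theta$ and $h_{\theta,T}$ is concentrated on $T$ with rapidly decaying tails. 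The single fact about an individual $h_{\theta,T}$ we will use is its \emph{flatness}: since $\widehat{h_{\theta,T}}$ occupies a box dual, up to a constant, to $T$, local constancy gives $\|h_{\theta,T}\|_{L^q(\R^6)}^q\sim\|h_{\theta,T}\|_{L^\infty}^q\,|T|$ for all $q\ge 1$, with constants independent of $\theta,T,q$ (note $|T|\sim N^{9/2}$ is the same for every cap). Also $\|h_{\theta,T}\|_{L^\infty}\le\|f*\Xi_\theta\|_{L^\infty}\le A$, because $0\le\phi_T^2\le 1$.

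Now pigeonhole. Put into $g$ (i) every $h_{\theta,T}$ whose tile $T$ is at distance $\ge N^{1+\epsilon}$ from $Q$, and (ii) every $h_{\theta,T}$ with $\|h_{\theta,T}\|_{L^\infty}<AN^{-30}$. For (i), the Schwartz decay of $\phi$ bounds each such packet on $Q$ by $\lesssim_M A\,(\mathrm{dist}(T,Q)/N)^{-M}$ for any $M$; summing over the geometrically growing far tiles and over the $\sim N^{3/2}$ caps $\theta$ gives $\lesssim AN^{-24}$ on $Q$ once $M$ is large in terms of $\epsilon$. For (ii), only $N^{O(1)}$ tiles lie within $N^{1+\epsilon}$ of $Q$, so together with the $\sim N^{3/2}$ caps the total there is $\lesssim AN^{O(1)}N^{-30}\lesssim AN^{-24}$; this gives property (1). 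The surviving packets are then sorted dyadically: $m$ takes $O(\log N)$ values with $\|h_{\theta,T}\|_{L^\infty}\sim 2^m$, $AN^{-30}\lesssim 2^m\lesssim A$; and for each such $m$, writing $N_\theta$ for the number of surviving tiles above $\theta$ in that height class (so $N_\theta\lesssim N^{O(1)}$, as only $N^{O(1)}$ tiles are near $Q$), $j$ takes $O(\log N)$ values with $N_\theta\sim 2^j$. Set $f^{[j,m]}:=\sum_{(\theta,T)\in(j,m)}h_{\theta,T}$; then $f=\sum_{j,m}f^{[j,m]}+g$, which is the asserted decomposition.

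It remains to prove property (2) for a fixed $(j,m)$; write $f^{[j,m]}_\theta:=\sum_{T\in(j,m)}h_{\theta,T}$, let $\Theta$ be the set of caps occurring, and $n:=\#\Theta$. Left side, upper bound: the frequency supports $CR_\theta$ overlap boundedly in $\theta$, so by Plancherel $\|f^{[j,m]}\|_{L^2}^2\lesssim\sum_\theta\|f^{[j,m]}_\theta\|_{L^2}^2$; the tiles above a fixed $\theta$ are essentially disjoint, so by flatness $\|f^{[j,m]}_\theta\|_{L^2}^2\lesssim\sum_{T\in(j,m)}\|h_{\theta,T}\|_{L^2}^2\sim N_\theta\,2^{2m}|T|\sim 2^j2^{2m}|T|$; and since $\widehat{\Xi_\theta}$ meets only $O(1)$ of the boxes $CR_{\theta'}$, one gets $\|f^{[j,m]}*\Xi_\theta\|_{L^\infty}\lesssim 2^m$ and hence $\sum_\theta\|f^{[j,m]}*\Xi_\theta\|_{L^\infty}^2\lesssim 2^{2m}n$. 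Multiplying, the left side of (2) is $\lesssim (n\,2^j2^{2m}|T|)(2^{2m}n)^{(p-2)/2}\sim 2^{mp}|T|\,2^j\,n^{p/2}$. Right side, lower bound: for $\theta\in\Theta$, from $|h_{\theta,T}|\le|f*\Xi_\theta|$ and $\sum_{T\in(j,m)}\phi_T^{2p}\le 1$ we get $\|f*\Xi_\theta\|_{L^p}^p\ge\sum_{T\in(j,m)}\|h_{\theta,T}\|_{L^p}^p\sim N_\theta\,2^{mp}|T|\sim 2^j2^{mp}|T|$; raising to the power $2/p$ and summing over $\theta\in\Theta$, $\sum_\theta\|f*\Xi_\theta\|_{L^p}^2\gtrsim n\,2^{2m}|T|^{2/p}2^{2j/p}$, so $\bigl(\sum_\theta\|f*\Xi_\theta\|_{L^p}^2\bigr)^{p/2}\gtrsim 2^{mp}|T|\,2^j\,n^{p/2}$. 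Comparing the two displays proves (2). The main obstacle is precisely this matching of exponents: the auxiliary index $j$ is forced on us, because without the counts $N_\theta$ ($\theta\in\Theta$) being comparable one only has $\bigl(\sum_\theta N_\theta^{2/p}\bigr)^{p/2}\le n^{(p-2)/2}\sum_\theta N_\theta$, which points the wrong way; once $N_\theta\sim 2^j$ uniformly this becomes an equality and the estimate closes. The remaining points — uniformity in $j,m$ of flatness, almost-orthogonality and tile disjointness, and the verification that the discarded packets genuinely sum to a function small on $Q$ — are routine.
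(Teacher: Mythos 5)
Your proposal is correct and follows essentially the same route as the paper's proof in Section~7: the same exact tiling $f*\Xi_\theta=\sum_T(f*\Xi_\theta)\phi_T^2$, the same two-fold dyadic pigeonholing (in packet height $2^m$ and in the per-cap count $N_\theta\sim 2^j$, i.e.\ the paper's sets $\mathcal L^m_{\theta,Q}$ and $E_{j,m}$), the same error split into far tiles plus low-amplitude tiles, and the same $L^2$/Bernstein bookkeeping to close property~(2); your "flatness" $\|h_{\theta,T}\|_{L^q}^q\sim\|h_{\theta,T}\|_\infty^q|T|$ is exactly the paper's Bernstein step, and your $n=\#\Theta$, $2^j\sim N_\theta$, $|T|\sim N^{9/2}$ are $|E_{j,m}|$, $|\mathcal L^m_{\theta,Q}|$, $N^{9/2}$. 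The one place where you go slightly beyond the paper is the parenthetical explanation of \emph{why} the $j$-pigeonholing is indispensable — that without $N_\theta$ being comparable across $\theta\in\Theta$ one only gets $\bigl(\sum_\theta N_\theta^{2/p}\bigr)^{p/2}\le n^{(p-2)/2}\sum_\theta N_\theta$, the wrong direction — which the paper leaves implicit; this is a genuine clarification but not a different argument.
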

By using the above inequality, we can recover the original function $f$ from the packets.

\begin{proof}
	We decompose $f$ by dividing a frequency space; $f=\sum_{\theta \in \mathcal{P}_{N^{-1}}}f*\Xi_{\theta}$. Next, we decompose each $f*\Xi_{\theta}$ by splitting a physical space; $f*\Xi_{\theta}=\sum_{\pi \in L : \pi \| \theta} (f*\Xi_{\theta})\phi_{\pi}^2$, where $L=\{ \pi \}$ is a tiling of $\mathbb{R}^{6}$. We define
	$\mathcal{L}_{\theta,Q}=
	\{	\pi \in L \, : \, \pi \| \theta,\;
	\pi \cap 2N^{\epsilon}Q \neq \phi
	\}$. Note that $|\mathcal{L}_{\theta,Q}| \lesssim N^{3}$.

	Now, we exclude error terms. Define the error function $g$ to be
	\begin{displaymath}
	\begin{split}
	g&=\sum_{\theta \in \mathcal{P}_{N^{-1}} }
	\biggl( \sum_{\substack{
			\pi \in L : \pi \notin \mathcal{L}_{\theta,Q}}}(f*\Xi_{\theta})\phi_{\pi}^2+
	\sum_{\substack{\pi \in \mathcal{L}_{\theta,Q}\\\|(f*\Xi_{\theta})\phi_{\pi}\|_{L^{\infty}(\mathbb{R}^{6})} \leq AN^{-30}}}(f*\Xi_{\theta})\phi_{\pi}^2 \biggr).
	\end{split}
	\end{displaymath}
	To show the first property in Lemma 7.2, we use a Schwartz tail of the function $\phi_{\pi}$;
	\begin{displaymath}
	\begin{split}
		\|g\|_{L^{\infty}(Q)}    
		&\leq  
		\sum_{\theta} \biggl( \sum\limits_{\substack{\pi \in L :\\ \pi \notin \mathcal{L}_{\theta,Q}}}
		\|(f*\Xi_{\theta})\phi_{\pi}^2\|_{L^{\infty}(Q)} 
		+\sum\limits_{\substack{\pi \in \mathcal{L}_{\theta,Q} \\
				\|(f*\Xi_{\theta})\phi_{\pi}\|_{L^{\infty}(\mathbb{R}^{6})} \leq AN^{-30}}}
		\|(f*\Xi_{\theta})\phi_{\pi}^2\|_{L^{\infty}(Q)} \biggr)
		\\ &
		\lesssim_{\epsilon} N^{-300}\sup_{\theta}\|f*\Xi_{\theta}\|_{L^{\infty}(\mathbb{R}^{6})}+N^{6}AN^{-30}
		\lesssim N^{-24}A.
	\end{split}
	\end{displaymath}
	Hence, the first property follows.

	The main term can be written as
	\begin{displaymath}
	\begin{split}
	f-g=\sum_{\theta \in \mathcal{P}_{N^{-1}} }\sum\limits_{\substack{\pi \in \mathcal{L}_{\theta,Q} \\ \|(f*\Xi_{\theta})\phi_{\pi}\|_{L^{\infty}(\mathbb{R}^{6})} >AN^{-30} }} (f*\Xi_{\theta})\phi_{\pi}^2.
	\end{split}
	\end{displaymath}
	For each cube $\theta \in \mathcal{P}_{N^{-1}}$ and $m \in \mathbb{Z}$, we define
	\begin{displaymath}
	\begin{split}
	\mathcal{L}_{\theta,Q}^{m}=
	\{\pi \in \mathcal{L}_{\theta,Q}: 2^m < \|(f*\Xi_{\theta})\phi_{\pi}\|_{L^{\infty}(\mathbb{R}^{6})} \leq 2^{m+1}	\}.
	\end{split}
	\end{displaymath}
	Since
	$\| (f*\Xi_{\theta})\phi_{\pi} \|_{L^{\infty}(\mathbb{R}^{6})} \leq \| f*\Xi_{\theta}\|_{L^{\infty}(\mathbb{R}^{6}) } \leq A$, we can see $2^{m} \leq A$ if the set $\mathcal{L}_{\theta,Q}^{m}$ is non-empty.
	Next, for each $j \in \mathbb{Z}$ we define
	\begin{displaymath}
	\begin{split}
	E_{j,m}=\{\theta \in \mathcal{P}_{N^{-1}} : 2^{j} < |\mathcal{L}_{\theta,Q}^m| \leq 2^{j+1}	\}.
	\end{split}
	\end{displaymath}
	We can also see that $2^{j} \leq CN^{{3}}$ for some $C>0$ if the $E_{j,m}$ is non-empty, so the set $E_{j,m}$ is non-empty only for $j \lesssim \log{N}$.
	Now, we define the functions associated with $(j,m)$ by
	\begin{displaymath}
	\begin{split}
	f^{[j,m]}=\sum_{\theta \in E_{j,m}}
	\sum_{\pi \in \mathcal{L}_{\theta,Q}^{m}}(f*\Xi_{\theta})\phi_{\pi}^2.
	\end{split}
	\end{displaymath}
	We write $\mathcal{L}^{j,m} = \cup_{\theta \in E_{j,m}	} \mathcal{L}_{\theta,Q}^{m}$. Note that the equality $(7.1)$ holds and $|\mathcal{L}^{j,m}| = |E_{j,m}||\mathcal{L}_{\theta,Q}^{m}|$.
	
	We will show the inequality $(7.2)$. Observe that
	\begin{displaymath}
	\begin{split}
	\|f^{[j,m]}\|_{L^2(\mathbb{R}^{6})}^2 \lesssim 2^{2m} N^{\frac{9}{2}}|\mathcal{L}^{j,m}| \;\;\; \mathrm{and} \;\;\;
	\sum_{\theta}\|f^{[j,m]}*\Xi_{\theta}\|_{L^\infty(\mathbb{R}^{6})}^2 \lesssim 2^{2m}|E_{j,m}|.
	\end{split}
	\end{displaymath}
	These inequalities follow from an orthogonality property. The second property in Lemma 7.2 follows from
	\begin{displaymath}
	\begin{split}
	2^{mp}N^{\frac{9}{2}}|\mathcal{L}^{j,m}||E_{j,m}|^{\frac{p-2}{2}} \lesssim (\sum_{\theta \in \mathcal{P}_{N^{-1}}}\|f*\Xi_{\theta}\|_{L^p(\mathbb{R}^{6})}^2)^{\frac{p}{2}}.
	\end{split}
	\end{displaymath}
	Note that
	$|\mathcal{L}_{\theta_{1},Q}^{m}| \sim |\mathcal{L}_{\theta_{2},Q}^{m}| \sim 2^j$ for any $\theta_{1}$ and $\theta_{2}$ in $E_{j,m}$ and $|\mathcal{L}^{j,m}| \lesssim 2^j|E_{j,m}|$. 
	This implies that for any $\theta \in E_{j,m}$
	\begin{displaymath}
	\begin{split}
	2^{mp}N^{\frac{9}{2}}|\mathcal{L}^{j,m}||E_{j,m}|^{\frac{p-2}{2}}
	& \lesssim 2^{mp}N^{\frac{9}{2}}2^{j}|E_{j,m}|^{\frac{p}{2}} \lesssim 
	|E_{j,m}|^{\frac{p}{2}} 
	\sum_{\pi \in \mathcal{L}_{\theta,Q}^{m}} N^{\frac{9}{2}}2^{mp} \\ &
	\lesssim |E_{j,m}|^{\frac{p}{2}}
	\sum_{\pi \in \mathcal{L}_{\theta,Q}^{m}} \|(f*\Xi_{\theta})\phi_{\pi}\|_{L^{\infty}(\mathbb{R}^{6})}^pN^{\frac{9}{2}}
	\\ &
	\lesssim |E_{j,m}|^{\frac{p}{2}}
	\sum_{\pi \in \mathcal{L}_{\theta,Q}^{m}} \|(f*\Xi_{\theta})\phi_{\pi}\|_{L^p(\mathbb{R}^{6})}^p \lesssim 
	|E_{j,m}|^{\frac{p}{2}} \|f*\Xi_{\theta}\|_{L^p(\mathbb{R}^{6})}^p
	\end{split}
	\end{displaymath}
	by Bernstein's inequality and $\sum_{\pi}|\phi_{\pi}|^p \lesssim 1$.
	Raising to the power $\frac{2}{p}$ and summing over all $\theta \in E_{j,m}$ lead to the desired inequality.
\end{proof}

\section{Proof of Theorem 1.1}
\label{sec:7}

We will follow the multiscale argument in \cite{G-Decoupling-2014}. The only difference between our proof and the proof in \cite{G-Decoupling-2014} is that we obtain the $l^p$ decoupling instead of the $l^2$ decoupling, but this will not make any trouble.

For simplicity, we write
\begin{gather*}
\|f\|_{L^{p,N}(\mathbb{R}^{6})}=
(\sum\limits_{\substack{\theta \in \mathcal{P}_{N^{-1}}}}\|f*\Xi_{\theta}\|_{L^p(\mathbb{R}^{6})}^p)^{\frac{1}{p}},
\\
\|f\|_{L^{p,N}(w_{B_{N}})}=
(\sum\limits_{\substack{\theta \in \mathcal{P}_{N^{-1}}}}\|f*\Xi_{N^{-1}}\|_{L^p(w_{B_{N}})}^p)^{\frac{1}{p}}
\end{gather*}
for any $N \geq1$ and $ 1 \leq p < \infty$. 

By Theorem 5.1, we can assume that $\gamma_{lin} \leq \gamma_{bil}$. Moreover, Theorem 1.1 follows from
\begin{thm}
	For $s \geq 6$, we have
	\begin{displaymath}
	\begin{split}
	\gamma_{bil}(s) \leq \frac{3}{2}-\frac{6}{s}.
	\end{split}
	\end{displaymath}
\end{thm}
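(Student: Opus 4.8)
The plan is to run the Bourgain--Demeter multiscale (``ball-inflation'') argument in the form used in \cite{G-Decoupling-2014}, the only change being that the $\ell^{2}$-sums over caps are everywhere replaced by $\ell^{s}$-sums and the $L^{2}$-orthogonality decoupling by the trivial $\ell^{s}$ decoupling of Lemma 3.3. The geometric input is the bilinear Kakeya estimate, Corollary 2.5, which holds with no loss at every exponent $s\ge 4$ because $S$ is a submanifold of half the ambient dimension; no decoupling clustered near a variety is needed here, that ingredient having entered only in the linear-from-bilinear reduction of Section 5. First I would carry out the standard reductions. By Propositions 6.1 and 6.2 it suffices to bound
\[
\||f_{1}f_{2}|^{1/2}\|_{L^{s}(\mathbb{R}^{6})}\le C_{p,\epsilon}N^{\frac{3}{2}-\frac{6}{s}+\epsilon}\prod_{i=1}^{2}\Bigl(\sum_{\theta\in\mathcal{P}_{N^{-1}}}\|f_{i}*\Xi_{\theta}\|_{L^{s}(\mathbb{R}^{6})}^{s}\Bigr)^{\frac{1}{2s}}
\]
whenever $\widehat{f_{i}}$ is supported in $\mathcal{N}_{N^{-1}}(Q_{i})$ for $\nu$-transverse cubes $Q_{1},Q_{2}\subset[0,1]^{3}$. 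Since $D_{bil}(N,s,\nu)\le D_{bil}(N,6,\nu)^{6/s}D_{bil}(N,\infty,\nu)^{1-6/s}$ by interpolation in $s$, and $D_{bil}(N,\infty,\nu)\lesssim N^{3/2}$ trivially, it is enough to prove $\gamma_{bil}(6)\le\frac{1}{2}$, which I would do by induction on dyadic $N$. Finally, the wave packet decomposition (Lemma 7.2) together with pigeonholing in the indices $(j,m)$ — which costs only $N^{\epsilon}$ and discards a negligible error — reduces matters to the case in which each $f_{i}$ is a single-scale superposition of wave packets of a common amplitude, with a common number of tubes above each cap $\theta$.

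The core is a single-scale bilinear estimate, a ball-inflation inequality. Fix a large constant $K$, partition $[0,1]^{3}$ into caps $\tau\in\mathcal{P}_{K^{-1}}$, and write $F_{i,\tau}$ for the part of $f_{i}$ with frequency over $\tau$. Because $Q_{1}$ and $Q_{2}$ are $\nu$-transverse, after subdividing and discarding a bounded number of pairs the families $\{\tau\subset Q_{1}\}$ and $\{\tau\subset Q_{2}\}$ are $\nu'$-transverse for some comparable $\nu'$; feeding the single-scale wave-packet structure of the $f_{i}$ into Corollary 2.5, applied with exponent $s/4\ge 1$, controls the overlaps of the associated tubes and yields a ball-inflation inequality, namely that at the cost of a factor $K^{\epsilon}$ the integral over a ball of radius $K^{2}$ of $\prod_{i=1}^{2}\bigl(\sum_{\tau\subset Q_{i}}|F_{i,\tau}|^{2}\bigr)^{s/4}$ is dominated by the same integral with each factor $\sum_{\tau\subset Q_{i}}|F_{i,\tau}|^{2}$ replaced by its average over sub-balls of radius $K$. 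Combining this with the $\ell^{s}$ flat decoupling of Lemma 3.3 for the $K^{3/2}$ caps $\tau$, with parabolic rescaling (Proposition 6.3) to renormalize each $\tau$, and with the inductive hypothesis at the smaller scale for the remaining decouplings within individual caps — these are linear, so $\gamma_{lin}\le\gamma_{bil}$ lets one reinsert a bilinear constant — I would reach a recursive inequality for $D_{bil}(\,\cdot\,,6,\nu)$ passing from scale $N$ to a smaller scale.

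Iterating this recursion down to the unit scale, with the usual management of the accumulating $K^{\epsilon}$ losses as in \cite{G-Decoupling-2014}, and unwinding the exponents, the flat-decoupling contributions combine with the loss-free ball inflations to give $\gamma_{bil}(6)\le\frac{1}{2}$; the interpolation from the first step then yields $\gamma_{bil}(s)\le\frac{3}{2}-\frac{6}{s}$ for all $s\ge 6$. (The bound is sharp: taking $f_{i}=1_{R_{i}}$ for transverse unit-sized cubes $R_{1},R_{2}$ already forces $\gamma_{bil}(s)\ge\frac{3}{2}-\frac{6}{s}$, so $s\ge 6$ is the natural range.) The hard part will be the bookkeeping rather than any single deep input: setting up the ball-inflation inequality with the normalization appropriate to a three-dimensional surface in $\mathbb{R}^{6}$, checking that the $\ell^{s}$ (rather than $\ell^{2}$) form of each intermediate step goes through verbatim, and verifying that the exponents produced by combining ball inflation, flat decoupling, and parabolic rescaling balance to exactly the claimed value — equivalently, that $s=6$ is the fixed point of the iteration. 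Everything else is the Bourgain--Demeter framework applied without modification.
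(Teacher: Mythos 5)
Your plan diverges from the paper's proof of Theorem~8.1 in two ways, and in each case you have substituted an assertion for an argument.

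First, the reduction to $s=6$ by interpolation. You write $D_{bil}(N,s,\nu)\le D_{bil}(N,6,\nu)^{6/s}D_{bil}(N,\infty,\nu)^{1-6/s}$ ``by interpolation in $s$'' as if this were free. It is not: both the $L^{p}$ norm on the left and the $\ell^{p}(L^{p})$ sum on the right change with $p$, and the naive pointwise H\"{o}lder bound for $\ell^{p}$ sums goes in the \emph{wrong} direction. The clean way to get this is to treat the $\ell^{p}(L^{p})$ side as an $L^{p}$ norm on a product space and interpolate the operator; for the bilinear quantity $\||E_{R_1}g_1E_{R_2}g_2|^{1/2}\|_{p}=\|E_{R_1}g_1E_{R_2}g_2\|_{p/2}^{1/2}$ this becomes a bilinear Riesz--Thorin with the target exponent $p/2$ tracking the source exponent $p$. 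That route should work, but it is a lemma, not a tautology, and it is simply not how the paper proceeds: Theorem~8.1 is proved for every $s\ge 6$ at once, with $s$ carried through the iteration.

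Second, and more importantly, the ``ball-inflation'' core. You never state the analogue of the paper's Proposition~8.3, which is where the proof actually lives. Proposition~8.3 takes the bilinear restriction bound (Proposition~8.2, proved from Corollary~2.4 — your ``Corollary~2.5'' is a typo) and the wave-packet decomposition (Lemma~7.2, in particular inequality~(7.2)) and produces a \emph{mixed} estimate in which the $s/2$ power of the local $L^{2}$ norms is split as $\frac{s}{4}$ in $\|f_{i}\|_{L^{2}}$ and $\frac{s}{4}$ in $\|f_{i}\|_{L^{s,N}}$. That mix is precisely what lets one half of the product iterate at the next dyadic scale while the other half is rescaled by $D_S(N/r^{2},s)\lesssim N^{(\gamma_{lin}+\epsilon)(1-2/2^{M})}$ via Proposition~6.3. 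Your proposal instead iterates a quantity with $\ell^{2}$ inner sums over caps, claims to replace ``$\ell^{2}$ by $\ell^{s}$ everywhere,'' and invokes Lemma~3.3 (flat $\ell^{s}$ decoupling) at each scale — none of which appears in the paper's argument, and you do not explain how the $\ell^{2}$-flavored Kakeya/restriction input is turned into the $\ell^{s}$ output without the bridge that (7.2) provides. You acknowledge that ``the hard part will be the bookkeeping'' and that you have not verified that ``$s=6$ is the fixed point of the iteration''; that bookkeeping \emph{is} the theorem, and it is exactly what the paper's Proof of Theorem~8.1 carries out (the chain of exponents $r^{M(-6+3s/2)}$, $N^{s(\gamma_{lin}+\epsilon)(1-M/2^{M})}$, etc., closing to $\gamma_{bil}(s)\le\frac{3}{2}(1-\frac{4}{s})=\frac{3}{2}-\frac{6}{s}$). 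As written, the central quantitative step of your argument is missing.
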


We first prove the following inequality. The proof of this is very similar to that of Proposition 2.1 in \cite{BCT-On-2006}, that of Proposition 4.7 in \cite{B-Aspects-2014} and that of Lemma 4.4 in \cite{TVV-A-1998}.

\begin{propo}
	Fix $N \geq 1$ and $\nu>0$. 
	If $\mathrm{supp}\hat{f}_{i} \subset \mathcal{N}_{N^{-1}}([0,1]^3)$ and the Fourier supports of $f_{i}$ are $\nu$-transverse, then for each $p \geq 4$
	\begin{displaymath}
	\int_{B_{N}} \prod_{i=1}^{2}\|f_{i}\|_{L^2({B_{N^{1/2}}}(x))}^{\frac{p}{2}}
	\, dx
	\lesssim_{\nu}{N^{3(1-\frac{p}{4})}}
	\prod_{i=1}^{2}\|f_{i}\|_{L^2(\mathbb{R}^{6})}^{\frac{p}{2}}.
	\end{displaymath}
\end{propo}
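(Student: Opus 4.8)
The plan is to reduce the claimed local bilinear $L^2$ estimate to the bilinear Kakeya-type inequality of Section 2 (Corollary 2.4) via a wave packet expansion of each $f_i$. First I would run the wave packet decomposition (Lemma 7.2) on each $f_i$, writing $f_i = \sum_{j,m} f_i^{[j,m]} + g_i$ where the error $g_i$ is pointwise of size $\lesssim N^{-24}\|f_i\|_\infty$ on the relevant cube and can be absorbed trivially (its contribution to the left side is negligible compared to the right side, since $\|f_i\|_{L^2}$ dominates polynomially). By pigeonholing over the $O(\log N)$ pairs $(j,m)$ and paying an $N^\epsilon$ (here actually $\log^{O(1)} N$) loss — which is harmless for a $\gamma_{bil}$ statement — it suffices to prove the estimate for single wave packet families $f_i^{[j,m]}$, each of which is a sum $\sum_{\pi} (f_i*\Xi_{\theta(\pi)})\phi_\pi^2$ of wave packets of essentially constant amplitude $\sim 2^{m_i}$, supported on dual boxes $R_\theta^*$ that are translates of the plates $\widetilde P_{i,a}$ from Section 2.

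The key point is that $\|f_i^{[j,m]}\|_{L^2(B_{N^{1/2}}(x))}^2$ is, up to constants, $2^{2m_i} N^{9/2}$ times the number of wave packets of $f_i^{[j,m]}$ passing within $O(N^{1/2})$ of $x$; equivalently, it is comparable to $\left(\sum_a T_{i,a}*G_{i,a}\right)(x)$ for suitable nonnegative bump functions $G_{i,a}$ with $\|G_{i,a}\|_{L^1}\sim 2^{2m_i}N^{9/2}\cdot(\text{volume factor})$, where the $T_{i,a}$ are the indicator functions of the $1$-neighborhoods of the relevant plates after rescaling the box $B_{N^{1/2}}$ to unit scale. (One rescales $x \mapsto x/N^{1/2}$ so that $B_{N^{1/2}}$-averages become unit-cube averages and the dual plates $R_\theta^*$ of dimensions $N^{1/2}\times N^{1/2}\times N^{1/2}\times N \times N \times N$ become plates of dimensions $1\times1\times1\times N^{1/2}\times N^{1/2}\times N^{1/2}$ — i.e., cubes of side $N^{1/2}$ on a $3$-plane spanned by $m_1(c),m_2(c),m_3(c)$, which is exactly the class $\mathcal P$ of Section 2.) Transversality of the Fourier supports of $f_1,f_2$ translates into $\nu'$-transversality of the two plate families (with $\nu'$ depending on $\nu$). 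Applying Corollary 2.4 with exponent $p/4 \geq 1$ then yields
\begin{displaymath}
\int_{B_N}\prod_{i=1}^2 \|f_i^{[j,m]}\|_{L^2(B_{N^{1/2}}(x))}^{p/2}\,dx
\;\lesssim_\nu\; N^{3}\cdot N^{\frac{3p}{4}}\cdot\prod_{i=1}^2\Big(\sum_a \|G_{i,a}\|_{L^1}\Big)^{p/4},
\end{displaymath}
after undoing the $x\mapsto N^{1/2}x$ rescaling (which contributes the factor $N^{3}$ from $dx$), and bookkeeping of the amplitude and volume constants converts $\prod_i(\sum_a\|G_{i,a}\|_{L^1})^{p/4}$ into $\prod_i \|f_i^{[j,m]}\|_{L^2(\mathbb R^6)}^{p/2}$ times the stated power $N^{-3\cdot p/4}$ of $N$; combined with the $N^{3}$ Jacobian factor this gives the exponent $3 - 3p/4 = 3(1-p/4)$. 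Finally one sums back over $(j,m)$ using $\sum_{j,m}\|f_i^{[j,m]}\|_{L^2}^{2}\lesssim \|f_i\|_{L^2}^2$-type orthogonality (and the $p/2\geq 2$ power together with a pigeonhole) to recover the estimate for $f_1,f_2$ themselves.

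The main obstacle I expect is the bookkeeping in the reduction to Corollary 2.4: one must carefully identify $\|f_i\|_{L^2(B_{N^{1/2}}(x))}^2$ with a plate-counting quantity, track all powers of $N$ coming from (a) the amplitude normalization $2^{2m_i}$, (b) the volume of a wave packet $|R_\theta^*|$ and of $B_{N^{1/2}}$, and (c) the Jacobian of the final rescaling, and check that they assemble into exactly $N^{3(1-p/4)}$ with $\prod_i\|f_i\|_{L^2}^{p/2}$ on the right. The transversality transfer — showing $\nu$-transversality of the Fourier supports of $f_1,f_2$ forces the associated plate families to be $\nu'$-transverse in the sense of Definition 2.2 — is routine given the definitions in Section 2 but must be stated. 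Everything else (the wave packet decomposition, Corollary 2.4) is quoted from earlier in the paper, and the $\log N$ losses from pigeonholing over $(j,m)$ are invisible at the level of $\gamma_{bil}$.
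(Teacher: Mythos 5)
Your approach is correct in spirit but takes a genuinely different route from the paper, and proves a slightly weaker statement. The paper follows the Bennett--Carbery--Tao reduction directly, with no wave packet decomposition and no pigeonholing: it writes $\|f_i\|_{L^2(B_{N^{1/2}}(x))}^2 \lesssim \sum_\theta \|f_i*\Xi_\theta\|_{L^2(\eta_{B_{N^{1/2}}(x)})}^2$ by local orthogonality, then for each $\theta$ factors $\widehat{f_i*\Xi_\theta}$ through a bump $\eta_{\theta_0}$ adapted to the slab $\theta_0$, so that Cauchy--Schwarz gives the pointwise domination $\|f_i*\Xi_\theta\|^2_{L^2(B_{cN^{1/2}}(x+c_i))}\lesssim N^{-3/2}\bigl(|\tilde f_{i,\theta}|^2*\chi_{\theta_0^*}\bigr)(-x)$, with $\chi_{\theta_0^*}$ precisely the indicator of a plate. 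After a $y=N^{-1/2}x$ rescaling this feeds directly into Corollary~2.4, and Plancherel recovers $\prod_i\|f_i\|_{L^2}^{p/2}$. This avoids your wave packet decomposition (Lemma~7.2) entirely, hence incurs no logarithmic loss: Proposition~8.2 as stated has $\lesssim_\nu$ with no $N^\epsilon$, and the paper's proof delivers exactly that.

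Your route, by contrast, decomposes each $f_i$ into constant-amplitude wave packet families $f_i^{[j,m]}$, pigeonholes over $(j,m)$, identifies $\|f_i^{[j,m]}\|_{L^2(B_{N^{1/2}}(x))}^2$ with $2^{2m_i}N^3$ times a plate count, rescales, and invokes Corollary~2.4. The exponent bookkeeping you outline does assemble to $N^{3(1-p/4)}$ as you claim (using $\|f_i^{[j,m]}\|_{L^2}^2 \sim 2^{2m_i}N^{9/2}|\mathcal L^{j,m}_i|$ and the $N^3$ Jacobian). But the pigeonholing costs a factor $(\log N)^{O(1)}$ that the stated proposition does not allow. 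You correctly note this is invisible at the level of $\gamma_{bil}$ --- and indeed, in the paper's proof of Proposition~8.3 the wave packet decomposition and an $N^\epsilon$ pigeonhole loss are applied \emph{before} Proposition~8.2 is invoked, so an additional log loss inside Proposition~8.2 would not break Theorem~8.1. Still, you should be aware that you are proving $\lesssim_\nu (\log N)^{O(1)}N^{3(1-p/4)}$ rather than the sharper inequality claimed, and that the paper's direct bump-function argument gets the clean constant. Two smaller points worth stating explicitly if you wrote this up: (i) the error term $g$ from Lemma~7.2 must be shown negligible relative to $\prod_i\|f_i\|_{L^2}^{p/2}$, which requires a mild nondegeneracy normalization (the paper avoids this by not decomposing); and (ii) the identification of $\|f_i^{[j,m]}\|_{L^2(B_{N^{1/2}}(x))}^2$ with a plate count is an \emph{upper} bound only, which is the direction you need, but the Schwartz tails of $\phi_\pi$ are what force you into the $T_{i,a}*g_{i,a}$ form of Corollary~2.4 rather than bare indicators.
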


\begin{proof}
	Let $\eta$ be the function defined at the beginning of the proof of Proposition 5.2. Since the collection $\{\widehat{(f_{i}*\Xi_{\theta})}*\widehat{\eta_{B_{N^{1/2}}(x)}}\}_{\theta \in \mathcal{P}_{N^{-1}} }$ is finitely overlapping, we have
	\begin{displaymath}
	\begin{split}
	\int_{B_{N}}\prod_{i=1}^{2}\|f_{i}\|_{L^2(B_{N^{1/2}}(x))}^{\frac{p}{2}}\,dx \lesssim 	\int_{B_{N}}\prod_{i=1}^{2}(\sum_{\theta
		\in \mathcal{P}_{N^{-1}}
	}\|f_{i}*\Xi_{\theta}\|_{L^2(\eta_{B_{N^{1/2}}(x))}}^{2})^{\frac{p}{4}}\,dx.
	\end{split}
	\end{displaymath}
	Hence, it suffices to prove that
	\begin{displaymath}
	\begin{split}
	\int_{B_{N}}\prod_{i=1}^{2}(\sum_{\theta\in \mathcal{P}_{N^{-1}} }\|f_{i}*\Xi_{\theta}\|_{L^2(\eta_{B_{N^{1/2}}(x))}}^{2}\,)^{\frac{p}{4}}dx \lesssim_{\nu} N^{3(1-\frac{p}{4})}
	\prod_{i=1}^{2}\|f_{i}\|_{L^2(\mathbb{R}^{6})}^{\frac{p}{2}}.
	\end{split}
	\end{displaymath}
	Moreover, by a Schwartz tail of $\eta$, it suffices to show that for any $c_{1},c_{2} \in \mathbb{R}^{6}$
	\begin{displaymath}
	\begin{split}
	\int_{B_{N}}\prod_{i=1}^{2}(\sum_{\theta\in \mathcal{P}_{N^{-1}} }\|f_{i}*\Xi_{\theta}\|_{L^2({B_{cN^{1/2}}(x+c_{i})}}^{2}\,)^{\frac{p}{4}}dx \lesssim_{\nu} N^{3(1-\frac{p}{4})}
	\prod_{i=1}^{2}\|f_{i}\|_{L^2(\mathbb{R}^{6})}^{\frac{p}{2}}
	\end{split}
	\end{displaymath}
	for some sufficiently small $c>0$.
	For each $\theta \in \mathcal{P}_{N^{-1}}$, we take a rectangular box $\theta_{0}^*$ so that $\theta_{0}^* \| \theta$ and $\mathcal{N}_{5N^{-1}}(5\theta) \subset \theta_{0}$. 
	Note that $\eta_{\theta_{0}} = 1$ on $\theta_{0}$ and 
	\begin{displaymath}
	\begin{split}
	|{\eta_{\theta_{0}}}^{\vee}(x+y)| \lesssim N^{-\frac{9}{2}}\chi_{\theta_{0}^{*}}(-x)
	\end{split}
	\end{displaymath}
	for all $x,y$ with $y \in [0,cN^{\frac{1}{2}}]^{6}$. Define $({\tilde{f}_{i,\theta}})^{\vee}(\xi)=e(-\xi \cdot c_{i})\widehat{({f}_{i}*\Xi_{\theta})}(\xi)/\eta_{\theta_{0}}(\xi)$. By Cauchy-Schwartz's inequality and the above inequality, we have
	\begin{displaymath}
	\begin{split}
	|(f_{i}*\Xi_{\theta})(x+y)|^2 &  \lesssim
	(\int_{\mathbb{R}^{6}} |\tilde{f}_{i,\theta}(z-x-y+c_{i})({\eta_{\theta_{0}}})^{\vee}(z)|\,dz)^2 
	\\ & \lesssim \int_{\mathbb{R}^{6}} |\tilde{f}_{i,\theta}(z-x+c_{i})|^2|{(\eta_{\theta_{0}})}^{\vee}(z+y)|\,dz \\&
	\lesssim N^{-\frac{9}{2}}(|\tilde{f}_{i,\theta}|^2*\chi_{\theta_{0}^{*}})(-x+c_{i})
	\end{split}
	\end{displaymath}
	for any $y \in [0,cN^{\frac{1}{2}}]^{6}$ and $x \in \mathbb{R}^{6}$. Integrating this in $y$ variable, we conclude
	\begin{displaymath}
	\begin{split}
	\|f_{i}*\Xi_{\theta}\|_{L^2(B_{cN^{1/2}}(x+c_{i}))}^2 \lesssim N^{-\frac{3}{2}}(|\tilde{f}_{i,
		\theta}|^2*\chi_{\theta_{0}^{*}})(-x).
	\end{split}
	\end{displaymath}
	Hence, we have
	\begin{gather*}
		\int_{B_{N}}\prod_{i=1}^{2}(\sum_{\theta \in \mathcal{P}_{N^{-1}}}\|f_{i}*\Xi_{\theta}\|_{L^2(B_{cN^{{1}/{2}}}(x+c_{i}))}^2)^{\frac{p}{4}}\,dx 
		\lesssim 
		N^{-\frac{3p}{4}}	\int_{B_{N}}\prod_{i=1}^{2}(\sum_{\theta \in \mathcal{P}_{N^{-1}}}|\tilde{f}_{i,\theta}|^2*\chi_{\theta_{0}^{*}}(x))^{\frac{p}{4}}\,dx.
	\end{gather*}
	To apply Corollary 2.4, we use the change of variables: $y=N^{-\frac{1}{2}}x$. Then the above term is bounded by
	\begin{displaymath}
	\begin{split}
	& \lesssim N^{3(1-\frac{p}{4})}\int_{\mathbb{R}^{6}}
	(\prod_{i=1}^{2}\sum_{\theta}|\tilde{f}_{i,\theta}|^2*\chi_{\theta_{0}^*}(N^{\frac{1}{2}}y))^{\frac{p}{4}}\,dy
	\\ & \lesssim_{\nu}N^{3(1-\frac{p}{4})}
	(\prod_{i=1}^{2}
	\int_{\mathbb{R}^{6}}
	\sum_{\theta}|\tilde{f}_{i,\theta}|^2(N^{\frac{1}{2}}z)N^{3}\,dz )^{\frac{p}{4}}
	\lesssim N^{3(1-\frac{p}{4})}
	\prod_{i=1}^{2}
	\|f_{i}\|_{L^2(\mathbb{R}^{6})}^{\frac{p}{2}}.
	\end{split}
	\end{displaymath}	
	The last inequality follows from Plancherel's theorem and the pointwise comparability of $|({\tilde{f}_{i,\theta}})^{\vee}|$ and $|\widehat{f_{i}*\Xi_{\theta}}|$.
\end{proof}

Since we are interested in the decoupling, we have to change $L^2$ norm on the right hand side in Proposition 8.2 into $L^{p,N}$ norm. However, for $p \geq 6$, the exponent of $N$ is too large to obtain the desired results if we simply apply H\"{o}lder's inequality to $L^{2,N}$ norm to obtain $L^{p,N}$ norm. As a compromise, we use a half and half mix of $L^2$ norm and $L^{p,N}$ norm.

\begin{propo} Fix $N \geq 1$, $\nu>0$ and $s \geq 6$. If $\mathrm{supp}\widehat{f_{i}} \subset \mathcal{N}_{N^{-1}}([0,1]^3)$ and the Fourier supports of $f_{i}$ are $\nu$-transverse, then for each $\epsilon>0$, we have
	\begin{gather*}
		\int_{B_{N}} \prod_{i=1}^{2}\|f_{i}\|_{L^2({w}_{B_{N^{{1}/{2}}}}(x))}^{\frac{s}{2}}
		\, dx		 
		\lesssim_{\epsilon,\nu} 
		 N^{\epsilon+\frac{3s}{4}}
		\prod_{i=1}^{2}\|f_{i}\|_{L^2({w}_{B_{N}+a_{i,N} })}^{\frac{s}{4}}\prod_{i=1}^{2}\|f_{i}\|_{L^{s,N}({w}_{B_{N}+a_{i,N} })}^{\frac{s}{4}}
	\end{gather*}
	for some point $a_{i,N}$ depending on a choice of $f_{i}$ but not a center of $B_{N}$.
\end{propo}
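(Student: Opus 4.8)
The plan is to begin with the standard Schwartz‑tail and local‑orthogonality reductions used in the proof of Proposition~8.2, so that it suffices to prove the unweighted inequality
\[
\int_{B_N}\prod_{i=1}^2 A_i(x)^{s/4}\,dx\ \lesssim_\nu\ N^{3s/4}\prod_{i=1}^2\|f_i\|_{L^2(\R^6)}^{s/4}\prod_{i=1}^2\|f_i\|_{L^{s,N}(\R^6)}^{s/4},\qquad A_i(x):=\|f_i\|_{L^2(B_{N^{1/2}}(x))}^2 .
\]
Passing back to the weights $w_{B_{N^{1/2}}}(x)$ on the left and $w_{B_N}$ on the right is done by dominating each weight by a summable superposition of translated sharp balls; since $s/4\ge 1$ this costs at most a factor $N^{\epsilon}$, and because the tubes feeding the Kakeya estimates below only see $f_i$ on a translate of $B_N$, it replaces $\R^6$ by $B_N+a_{i,N}$, where $a_{i,N}$ is the translate maximising the relevant norm of $f_i$ (depending on $f_i$ and $N$ but not on the centre of $B_N$).

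The displayed inequality I would obtain by interpolating, by H\"older's inequality in $x$, two bilinear Kakeya estimates, both consequences of Corollary~2.4. The first is Proposition~8.2 itself applied at the exponent $p=\tfrac{2s}{3}$ — legitimate precisely because $s\ge 6$ forces $p\ge 4$ — which reads
\[
\int_{B_N}\prod_{i=1}^2 A_i(x)^{s/6}\,dx\ \lesssim_\nu\ N^{\,3-s/2}\,\prod_{i=1}^2\|f_i\|_{L^2(\R^6)}^{s/3}.
\]
For the second, run the wave‑packet reduction from the proof of Proposition~8.2: up to rapidly decaying tails in the translates $c_i$ one has $A_i(x)\lesssim N^{-3/2}\sum_{\theta}\big(|\tilde f_{i,\theta}|^2*\chi_{\theta_0^*}\big)(-x)$, where $\tilde f_{i,\theta}=(f_i*\Xi_{\theta})(\cdot-c_i)$ and $\theta_0^*$ is the box dual to the rectangle $\theta_0\supset\mathcal N_{5N^{-1}}(5\theta)$, of volume $\sim N^{9/2}$. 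Applying H\"older's inequality in the physical variable at the scale of $\theta_0^*$, and then H\"older in the $\sim N^{3/2}$ caps $\theta$, yields
\[
A_i(x)^{s/2}\ \lesssim\ N^{\,9s/4-6}\,K_i(-x),\qquad K_i:=\sum_{\theta}|\tilde f_{i,\theta}|^s*\chi_{\theta_0^*}.
\]
Rescaling $x\mapsto N^{-1/2}x$ turns each $\chi_{\theta_0^*}$ into the indicator of a tube of exactly the shape $\tilde P$ appearing in Section~2, and the families of such tubes attached to $f_1$ and to $f_2$ are $\nu$‑transverse because the Fourier supports of $f_1,f_2$ project into $\nu$‑transverse cubes; hence Corollary~2.4 with $p=4$ and $g_{i,\theta}=|\tilde f_{i,\theta}|^s$ gives $\int_{\R^6}K_1K_2\lesssim_\nu N^3\prod_i\|f_i\|_{L^{s,N}(\R^6)}^s$. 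Combining the two displays produces
\[
\int_{B_N}\prod_{i=1}^2 A_i(x)^{s/2}\,dx\ \lesssim_\nu\ N^{\,9s/2-9}\,\prod_{i=1}^2\|f_i\|_{L^{s,N}(\R^6)}^{s}.
\]

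Finally, since $\tfrac s4=\tfrac34\cdot\tfrac s6+\tfrac14\cdot\tfrac s2$, H\"older's inequality in $x$ with exponents $\tfrac43$ and $4$ interpolates the last two displays:
\[
\int_{B_N}\prod_{i=1}^2 A_i^{s/4}\ \le\ \Big(\int_{B_N}\prod_{i=1}^2 A_i^{s/6}\Big)^{3/4}\Big(\int_{B_N}\prod_{i=1}^2 A_i^{s/2}\Big)^{1/4}\ \lesssim_\nu\ N^{3s/4}\prod_{i=1}^2\|f_i\|_{L^2}^{s/4}\prod_{i=1}^2\|f_i\|_{L^{s,N}}^{s/4},
\]
the exponent of $N$ being $\tfrac34(3-\tfrac s2)+\tfrac14(\tfrac{9s}{2}-9)=\tfrac{3s}{4}$, which is what is claimed. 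I expect the real difficulty here to be the exponent bookkeeping, not any new idea: the target power $N^{3s/4}$ is sharp at $s=6$ (compare the extremisers discussed in Section~9 of \cite{DGS-Sharp-2017}), so there is no slack, and this is what dictates the exact choice $p=\tfrac{2s}{3}$, the exact powers of $N$ coming out of the two H\"older steps in the construction of the second Kakeya estimate, and the requirement that the rescaled $\chi_{\theta_0^*}$ be normalised to match $\tilde P$ precisely when Corollary~2.4 is invoked. A secondary technical point is the absorption of the translates $c_i$ and of the superpositions of sharp balls into the single shift $a_{i,N}$, which uses $\nu$‑transversality only through the boundedness of the Kakeya overlap and is otherwise routine.
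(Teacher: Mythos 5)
Your argument is correct, and it takes a genuinely different route from the paper's. The paper proves the proposition by running the wave packet decomposition (Lemma~7.2) to split each $f_i$ into $O(\log^2 N)$ pieces with comparable packet heights, applies Proposition~8.2 once at the specific exponent $p=\tfrac{2+s}{2}$, and then recovers the $L^{s,N}$ cap norm through the balancing inequality~(7.2); this is where the $N^\epsilon$ loss in the unweighted statement comes from. You instead bypass Section~7 entirely: you keep Proposition~8.2 but at the exponent $p=\tfrac{2s}{3}$ (permissible precisely because $s\ge6$), manufacture a second, deliberately lossy Kakeya estimate in which the cap $L^2$ averages are upgraded to $L^s$ averages by two Hölder steps (one over the $\sim N^{3/2}$ caps, one over the box $\theta_0^*$ of volume $\sim N^{9/2}$), and then interpolate the two in $x$ with exponents $\tfrac43,4$. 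I checked the bookkeeping: $A_i^{s/2}\lesssim N^{-3s/4}\cdot N^{(3/2)(s/2-1)}\cdot N^{(9/2)(s/2-1)}K_i=N^{9s/4-6}K_i$, the rescaled $N^{-1/2}\theta_0^*$ has exactly the $1\times1\times1\times N^{1/2}\times N^{1/2}\times N^{1/2}$ shape of $\tilde P$ so Corollary~2.4 at $p=4$ applies, $\int K_1K_2\lesssim_\nu N^3\prod\|f_i\|_{L^{s,N}}^s$, and $\tfrac34(3-\tfrac s2)+\tfrac14(\tfrac{9s}{2}-9)=\tfrac{3s}{4}$ — all correct, and the Hölder weights $\tfrac34\cdot\tfrac s6+\tfrac14\cdot\tfrac s2=\tfrac s4$ do decompose the target exponent. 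What each approach buys: yours is shorter and more elementary, removes the dependence on the wave packet lemma altogether, and gives the unweighted estimate with no $N^\epsilon$ (the $\epsilon$ now enters only, if at all, through the weight transfer), which is a minor sharpening. The paper's wave‑packet route, on the other hand, produces the cap‑norm mix by a pigeonholing that is "tight" at every step rather than by pairing two lossy bounds, which is the standard template in Bourgain--Demeter and generalizes more mechanically to other exponent ranges; your interpolation works here because the two losses are arranged to cancel exactly at the target exponent. One small gap to be aware of: when you pass from $\|\tilde f_{i,\theta}\|_{L^s}$ back to $\|f_i*\Xi_\theta\|_{L^s}$ you need $1/\eta_{\theta_0}$ to be a bounded $L^s$ multiplier on the support of $\widehat{f_i*\Xi_\theta}$, not merely pointwise comparability of Fourier transforms (which suffices only for $L^2$, as in the paper's proof of Proposition~8.2); this is standard since $\eta_{\theta_0}$ is a fixed nondegenerate bump, but it should be said.
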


\begin{proof}
	Take $p=\frac{2+s}{2} \geq 4$.
	We will prove an unweighted  inequality first;
	\begin{gather*}
		\int_{B_{N}} \prod_{i=1}^{2}\|f_{i}\|_{L^2({B_{N^{{1}/{2}}}}(x))}^{\frac{s}{2}}
		\, dx		 
		\lesssim_{\epsilon,\nu} 
		N^{\epsilon+\frac{3s}{4}}
		\prod_{i=1}^{2}\|f_{i}\|_{L^2(\mathbb{R}^{6})}^{\frac{s}{4}}\prod_{i=1}^{2}\|f_{i}\|_{L^{s,N}(\mathbb{R}^{6})}^{\frac{s}{4}}.
	\end{gather*}
	We apply Lemma 7.2 with $Q=B_{2N}$ to the functions $f_{i}$. Since the error functions $g_{i}$ are much tiny compared to $f_{i}$, we can ignore these functions. For convenience, we reorder indices $[j,m]$ in Lemma 7.2 so that we can write  $f_{i}=\sum_{l=1}^{O(N^{\epsilon})}f_{i,l}+g$. Then we have
	\begin{displaymath}
	\begin{split}
	\int_{B_{N}} \prod_{i=1}^{2}\|f_{i}\|_{L^2(B_{N^{1/2}}(x))}^{\frac{s}{2}}
	\, dx		&
	\lesssim_{\epsilon}N^{\epsilon} \max_{l_{1},l_{2}} 
	\int_{B_{N}} \prod_{i=1}^{2}\|f_{i,l_{i}}\|_{L^2(B_{N^{1/2}}(x))}^{\frac{s}{2}}
	\, dx
	+
	\prod_{i=1}^{2}\|f_{i}\|_{L^2(\mathbb{R}^{6})}^{\frac{s}{4}}\prod_{i=1}^{2}\|f_{i}\|_{L^{s,N}(\mathbb{R}^{6})}^{\frac{s}{4}}.
	\end{split}
	\end{displaymath}
	The second term on the right hand side is harmless.
	By using Plancherel's theorem and a finitely overlapping property, the first term on the right hand side is bounded by
	\begin{gather*}
	\lesssim \max_{l_{1},l_{2}} \biggl[\biggl(
	\int_{B_{N}} \prod_{i=1}^{2}\|f_{i,l_{i}}\|_{L^2(B_{N^{1/2}}(x))}^{\frac{p}{2}}
	\, dx \biggr)	 \prod_{i=1}^{2}
	\sup_{x \in B_{N}}
	\|f_{i,l_{i}} \|_{L^2(B_{N^{1/2}}(x))}^{\frac{s-p}{2}}
	\biggr]
	\\ \lesssim \max_{l_{1},l_{2}} \biggl[\biggl(
	N^{\frac{3(s-p)}{2}}
	\int_{B_{N}} \prod_{i=1}^{2}\|f_{i,l_{i}}\|_{L^2(B_{N^{1/2}}(x))}^{\frac{p}{2}}
	\, dx  \biggr)
	\prod_{i=1}^{2}(\sum_{\theta \in \mathcal{P}_{N^{-1}} }\|f_{i,l_{i}}*\Xi_{\theta}\|_{L^{\infty}(\mathbb{R}^{6})}^2)^{\frac{s-p}{4}}
	\biggr].
	\end{gather*}
	We apply Proposition 8.2 and recover the original function $f_{i}$ from $f_{i,l_{i}}$ by using the inequality $(7.2)$. Then we can bound the above term by
	\begin{displaymath}
	\begin{split}
	&\lesssim N^{\frac{3(s-p)}{2}}
	N^{3(1-\frac{p}{4})}
	\max_{l_{1},l_{2}} \biggl[
	\prod_{i=1}^{2}\|f_{i,l_{i}}\|_{L^{2}(\mathbb{R}^{6})}^{\frac{p}{2}}
	\prod_{i=1}^{2}
	(\sum_{\theta \in \mathcal{P}_{N^{-1}}}\|f_{i,l_{i}}*\Xi_{\theta}\|_{L^{\infty}(\mathbb{R}^{6})}^2)^{\frac{s-p}{4}} \biggr]
	\\& \lesssim N^{\frac{3(s-p)}{2}}
	N^{3(1-\frac{p}{4})} \max_{l_{1},l_{2}} \biggl[
	\prod_{i=1}^{2} \|f_{i}\|_{L^{2}(\mathbb{R}^{6})}^{\frac{s}{4}}
	(\sum_{\theta \in \mathcal{P}_{N^{-1}}}\|f_{i}*\Xi_{\theta}\|_{L^{s}(\mathbb{R}^{6})}^2)^{\frac{s}{8}} \biggr] 
	\\& \lesssim N^{\frac{3(s-p)}{2}}
	N^{3(1-\frac{p}{4})}
	N^{\frac{3}{8}(s-2)}
	\prod_{i=1}^2 \|f_{i}\|_{L^{2}(\mathbb{R}^{6})}^{\frac{s}{4}}
	\|f_{i}\|_{L^{s,N}(\mathbb{R}^{6})}^{\frac{s}{4}}.	
	\end{split}
	\end{displaymath}
	The last inequality follows from H\"{o}lder's inequality. By direct computation, we can see that the exponent of $N$ is $\frac{3s}{4}$. Hence, we obtain the unweighted inequality. Next, by putting $f_{i}\eta_{B_{2N}}$ instead of $f_{i}$ and using a finitely overlapping property, we have
	\begin{gather*}
		\int_{B_{N}} \prod_{i=1}^{2}\|f_{i}\|_{L^2({B_{N^{{1}/{2}}}}(x))}^{\frac{s}{2}}
		\, dx 
		\lesssim_{\epsilon,\nu} N^{\epsilon}
		 N^{\frac{3s}{4}}
		\prod_{i=1}^{2}\|f_{i}\|_{L^2({w}_{B_{N} })}^{\frac{s}{4}}\prod_{i=1}^{2}\|f_{i}\|_{L^{s,N}({w}_{B_{N} })}^{\frac{s}{4}}.
	\end{gather*}
	Observe that
	\begin{displaymath}
	\begin{split}
	w_{B_{N^{1/2}}(x)}(y) \lesssim
	\sum_{c_{B} \in N^{1/2}\mathbb{Z}^{6} }
	1_{B_{N^{1/2}}(x+c_{B})}(y)w_{B_{N^{1/2}}(0)}(c_{B}).
	\end{split}
	\end{displaymath}
	By using this, we have
	\begin{gather*}
		\int_{B_{N}}
		\prod_{i=1}^{2}
		\|f_{i}\|_{L^2(w_{B_{N^{1/2}}(x)})}^{\frac{s}{2}}\,dx \\
		\lesssim
		\sum_{B',B'' \in \mathcal{B}_{N^{\frac{1}{2}}}}
		w_{B_{N^{\frac{1}{2}}}}(c_{B'})w_{B_{N^{\frac{1}{2}}}}(c_{B''})
		\int_{B_{N}}
		\|f_{1}\|^{\frac{s}{2}}_{L^2(B_{N^{\frac{1}{2}}}(x+c_{B'}))}\|f_{2}\|_{L^2(B_{N^{\frac{1}{2}}}(x+c_{B^{''}}))}^{\frac{s}{2}}\,dx
		\\
		\lesssim_{\epsilon, \nu}
		N^{\epsilon}
		 N^{\frac{3s}{4}}
		\prod_{i=1}^{2}
		\|f_{i}\|^{\frac{s}{4}}_{L^2(w_{B_{N}}+a_{i,N})}\|f_{i}\|_{L^{s,N}(w_{B_{N}+a_{i,N}})}^{\frac{s}{4}} 
	\end{gather*}
	for some point $a_{i,N}$.
	This completes the proof of Proposition 8.3.
\end{proof}

Now, we are ready to prove Theorem 8.1. Iterating Proposition 8.3 will lead to the desired inequality.
\subsubsection*{Proof of Theorem 8.1.}
Let $s \geq 6$. Fix a number $r=N^{2^{-M}} \geq 1$. By Bernstein's inequality,
\begin{displaymath}
\begin{split}
\int_{B_{N}}|f_{1}f_{2}|^{\frac{s}{2}} \lesssim \int_{B_{N+r}}\prod_{i=1}^2
\|f_{i}\|_{L^{\infty}(B_{r}(x))}^{\frac{s}{2}}\,dx \lesssim
\int_{B_{N+r}} \prod_{i=1}^{2}\|f_{i}\|_{L^2(w_{B_{r}(x)})}^{\frac{s}{2}}\,dx.
\end{split}
\end{displaymath}
We change the integrand into the average over cubes $B_{r^2}$;
\begin{displaymath}
\begin{split}
\lesssim
r^{-2 \cdot 6}
\int_{B_{N+r+r^2}}\bigl(\int_{B_{r^2}(x)}\prod_{i=1}^{2}\|f_{i}\|_{L^2(w_{B_{r}(y)})}^{\frac{s}{2}}\,dy \bigr)\,dx.
\end{split}
\end{displaymath}
Next, we apply Proposition 8.3 on each cube of side length $r^2$. Then the above term is bounded by
\begin{gather*}
	\lesssim r^{-2 \cdot 6}r^{2 \cdot \frac{3s}{4}}
	\int_{B_{N+r+r^2}} \prod_{i=1}^{2}\|f_{i}\|_{L^2({w}_{B_{r^2}(x+a_{i})})}^{\frac{s}{4}} \prod_{i=1}^{2}\|f_{i}\|_{L^{s,r^2}({w}_{B_{r^2}(x+a_{i})})}^{\frac{s}{4}}
	\,dx
	\\ 
	\lesssim
	r^{-12+\frac{3s}{2}}
	\bigl(\int_{B_{N+r+r^2}}  \prod_{i=1}^{2}\|f_{i}\|_{L^2({w}_{B_{r^2}(x+a_{i})})}^{\frac{s}{2}} \,dx \bigr)^{\frac{1}{2}}
	\prod_{i=1}^{2}
	\bigl(\int_{B_{N+r+r^2}}
	\|f_{i}\|_{L^{s,r^{2}}({w}_{B_{r^2}(x+a_{i})})}^{s} \,dx \bigr)^{\frac{1}{4}}
\end{gather*}
for some $a_{i} \in \mathbb{R}^{6}$. The second inequality follows from H\"{o}lder's inequality. We first consider the $L^{s,r^2}$ norm. By Fubini's theorem,
\begin{displaymath}
\begin{split}
&\int_{B_{N+r+r^2}}
\sum_{\theta \in \mathcal{P}_{r^{-2}}	}\|f_{i}*\Xi_{\theta}\|_{L^{s}({w}_{B_{r^2}(x+a_{i})})}^{s} \,dx
\lesssim r^{12} \|f_{i}\|_{L^{s,r^{2}}(\mathbb{R}^{6})}^{s}
.
\end{split}
\end{displaymath}
By Proposition 6.3, we obtain
\begin{displaymath}
\begin{split}
\|f_{i}\|_{L^{s,r^2}(\mathbb{R}^{6})}
&\lesssim D_{S}(\frac{N}{r^2},s)\|f_{i}\|_{L^{s,N}(\mathbb{R}^{6})} 
\\&\lesssim_{\epsilon} (\frac{N}{r^2})^{(\gamma_{lin}+\epsilon)}\|f_{i}\|_{L^{s,N}(\mathbb{R}^{6})}
=N^{(\gamma_{lin}+\epsilon)(1-\frac{2}{2^M})}\|f_{i}\|_{L^{s,N}(\mathbb{R}^{6})}.
\end{split}
\end{displaymath}
By using these two inequalities, we get
\begin{gather*}
	\int_{B_{N}}|f_{1}f_{2}|^{\frac{s}{2}} 
	\lesssim_{\epsilon}  r^{-6+\frac{3s}{2}}
	N^{{s(\gamma_{lin}+\epsilon)}(\frac{1}{2}-\frac{1}{2^M})} 
	\bigl(\int_{B_{N+r+r^2}}  \prod_{i=1}^{2}\|f_{i}\|_{L^2({w}_{B_{r^2}(x+a_{i})})}^{\frac{s}{2}} \,dx \bigr)^{\frac{1}{2}}
	\prod_{i=1}^{2}
	\|f_{i}\|_{L^{s,N}(\mathbb{R}^{6})}^{\frac{s}{4}}.
\end{gather*}
Repeating this process again on the first term gives
\begin{gather*}
	\int_{B_{N+r+r^2}}  \prod_{i=1}^{2}\|f_{i}\|_{L^2({w}^2_{B_{r^2}(x+a_{i})})}^{\frac{s}{2}} \,dx \\ \lesssim_{\epsilon} 
	r^{2\cdot (-6+{\frac{3s}{2}})}
	N^{{s(\gamma_{lin}+\epsilon)}(\frac{1}{2}-\frac{2}{2^M})} \prod_{i=1}^{2}
	\biggl(\int_{B_{N+r+r^2+r^4}}  \prod_{i=1}^{2}\|f_{i}\|_{L^2({w}^2_{B_{r^4}(x+b_{i})})}^{\frac{s}{2}} \,dx \biggr)^{\frac{1}{2}}
	\|f_{i}\|_{L^{s,N}(\mathbb{R}^{6})}^{\frac{s}{4}}
\end{gather*}
for some $b_{i} \in \mathbb{R}^{6}$. Combining these two inequalities leads to
\begin{gather*}
	\int_{B_{N}}|f_{1}f_{2}|^{\frac{s}{2}} \\
	\leq  C_{s,\epsilon} 
	r^{2\cdot (-6+ {\frac{3s}{2}})}N^{{s(\gamma_{lin}+\epsilon)}(\frac{3}{4}-\frac{2}{2^M})}
	\biggl(\int_{B_{N+r+r^2+r^4}}  \prod_{i=1}^{2}\|f_{i}\|_{L^2({w}_{B_{r^4}(x+b_{i})})}^{\frac{s}{2}} \,dx \biggr)^{\frac{1}{4}}
	\prod_{i=1}^{2}
	\|f_{i}\|_{L^{s,N}(\mathbb{R}^{6})}^{\frac{s}{4}(1+\frac{1}{2})}.
\end{gather*}
By repeating this process $M-2$ times more, recalling that $r=N^{2^{-M}}$ and using H\"{o}lder's inequality, we obtain
\begin{gather*}
	\int_{B_{N}}|f_{1}f_{2}|^{\frac{s}{2}} \\
	\leq C_{s,\epsilon}^{M}
	r^{C'}r^{M(-6+ {\frac{3s}{2}})}
	N^{{s(\gamma_{lin}+\epsilon)}(1-\frac{M}{2^M})} 
	\prod_{i=1}^{2}\|f_{i}\|_{L^2(\mathbb{R}^{6})}^{\frac{s}{2^{M+1}}}
	\prod_{i=1}^{2}
	\|f_{i}\|_{L^{s,N}(\mathbb{R}^{6})}^{\frac{s}{2}(1-\frac{1}{2^M})}
	\\  \leq N^{\frac{M \log C_{s,\epsilon} }{\log N}}
	N^{\frac{C''}{2^M}}N^{\frac{(-6+ {\frac{3s}{2}})M}{2^{M}}}N^{{s(\gamma_{lin}+\epsilon)}(1-\frac{M}{2^M})}
	\prod_{i=1}^{2}
	\|f_{i}\|_{L^{s,N}(\mathbb{R}^{6})}^{\frac{s}{2}}
	.
\end{gather*}
By using a standard localization argument and summing over cubes $B_{N}$ and raising to the power $\frac{1}{s}$, we have
\begin{gather*}
	\||f_{1}f_{2}|^{\frac{1}{2}}\|_{L^s(\mathbb{R}^{6})}
	\leq 
	N^{\frac{M \log C_{s,\epsilon} }{s\log N}+ \frac{C'}{2^M}+\frac{(-6+ {\frac{3s}{2}})M}{2^{M}s}+(\gamma_{lin}+\epsilon)(1-\frac{M}{2^M})} \prod_{i=1}^{2} \|f_{i}\|_{L^{s,N}(\mathbb{R}^{6})}^{\frac{1}{2}}.
\end{gather*}
By the definition of $\gamma_{bil}$, Proposition 6.1 and 6.2, we have
\begin{displaymath}
\begin{split}
N^{\gamma_{bil}(s)} \lesssim_{\epsilon} N^{\frac{M \log C_{s,\epsilon} }{s\log N}+\frac{C'}{2^M}+\frac{(-6+ {\frac{3s}{2}})M}{2^{M}s}+(\gamma_{lin}(s)+\epsilon)(1-\frac{M}{2^M})},
\end{split}
\end{displaymath}
by using $\gamma_{lin} \leq \gamma_{bil}$ and rearranging this inequality,
\begin{displaymath}
\begin{split}
N^{\gamma_{bil}(s)\frac{M}{2^M}} \lesssim_{\epsilon} N^{\frac{M \log C_{s,\epsilon} }{s\log N}+\frac{C'}{2^M}+\frac{M}{2^M} \cdot \frac{3}{2}(1-\frac{4}{s})+\epsilon}.
\end{split}
\end{displaymath}
By taking $N$ and $M$ sufficiently large and $\epsilon$ sufficiently small, we obtain
\begin{displaymath}
\begin{split}
\gamma_{bil}(s) \leq \frac{3}{2} -\frac{6}{s}.
\end{split}
\end{displaymath}
This completes the proof.
\qed

\subsection*{Acknowledgements}
The author thanks the referee for some comments that improved the presentation of our results.

				\end{document}